\date{}
\newtheorem{theorem}{Theorem}[section]
\newtheorem{lemma}{Lemma}[section]
\newtheorem{corollary}{Corollary}[section]
\newtheorem{proposition}{Proposition}[section]
\theoremstyle{definition}
\newtheorem{definition}{Definition}[section]
\newtheorem*{ex}{Example}
\newtheorem*{remark}{Remark}
\renewcommand{\S}{\mathbb{S}}
\newcommand{\D}{\mathbb{D}}
\newcommand{\N}{\mathbb{N}}
\newcommand{\Z}{\mathbb{Z}}
\newcommand{\R}{\mathbb{R}}
\newcommand{\C}{\mathbb{C}}
\newcommand{\La}{\Lambda}
\newcommand{\la}{\lambda}
\newcommand{\lr}{\rightarrow}
\newcommand{\sgn}{\operatorname{sgn}}
\newcommand{\Aut}{\operatorname{Aut}}
\newcommand{\Aug}{\operatorname{Aug}}
\newcommand{\Spec}{\operatorname{Spec}}
\newcommand{\st}{\text{st}}
\begin{document}
	\title{Lagrangian Fillings in A-type and Their K\'alm\'an Loop Orbits}
	
	\author{James Hughes}
\address{University of California Davis, Dept. of Mathematics, Shields Avenue, Davis, CA 95616, USA}
\email{jmhughes@math.ucdavis.edu}

	\begin{abstract}
	
	We compare two constructions of exact Lagrangian fillings of Legendrian positive braid closures, the Legendrian weaves of Casals-Zaslow, and the decomposable Lagrangian fillings, of Ekholm-Honda-K\'alm\'an and show that they coincide for large families of Lagrangian fillings. As a corollary, we obtain an explicit correspondence between Hamiltonian isotopy classes of decomposable Lagrangian fillings of Legendrian $(2,n)$ torus links described by Ekholm-Honda-K\'alm\'an and the weave fillings constructed by Treumann and Zaslow. We apply this result to describe the orbital structure of the K\'alm\'an loop and give a combinatorial criteria to determine the orbit size of a filling. We follow our geometric discussion with a Floer-theoretic proof of the orbital structure, where an identity studied by Euler in the context of continued fractions makes a surprise appearance. We conclude by giving a purely combinatorial description of the K\'alm\'an loop action on the fillings discussed above in terms of edge flips of triangulations. 
	

	\end{abstract}
	
	\makeatletter
\@namedef{subjclassname@2020}{%
  \textup{2020} Mathematics Subject Classification}
\makeatother

\subjclass[2020]{53D12, 53D35; 53D10, 57K10, 57K33} 

\keywords{Lagrangian fillings, Legendrian knots, Triangulations, Euler's continuants, Euler's identity.}

	\maketitle
	
		\section{Introduction}

Legendrian links and their exact Lagrangian fillings are objects of interest in contact and symplectic topology \cite{BourgeoisSabloffTraynor15, Chantraine10, EliashbergPolterovich96, EtnyreNg19}. Within the last decade, parallel developments in the constructive methods \cite{CasalsZaslow, EHK} and the application of both Floer-theoretic \cite{CasalsNg, GSW, YuPan} and microlocal-sheaf-theoretic \cite{CasalsGao, STZ_ConstrSheaves, TreumannZaslow} invariants have significantly advanced the classification of Lagrangian fillings. 
Broadly speaking, this manuscript aims to compare the two primary methods of constructing Lagrangian fillings, \cite{CasalsZaslow} and \cite{EHK}, of Legendrian positive braid closures. We then leverage this result in the well-studied case of Lagrangian fillings of Legendrian $(2, n)$ torus links in the standard contact 3-sphere to understand the action of a Legendrian loop on these fillings. In addition to a contact geometric approach, we discuss insights into properties of the augmentation variety associated to this class of Legendrian links afforded by this comparison. 

Denote by $\sigma_i$ the $i$th Artin generator of the $n$-stranded braid group $Br_n$ and let $\beta$ be a positive braid, i.e. a product of positive powers of $\sigma_i$. We define the family of maximal-tb Legendrian links $\la(\beta)$ in the front projection as the rainbow closure of $\beta$, as depicted in Figure \ref{fig: BraidClosure} (left). Legendrian $(2, n)$ torus links are given by the braid $\sigma_1^n \subseteq Br_2$. They are also smoothly described as the link of the complex $A_{n-1}$-singularity $f(x, y)= x^{n}+y^2$, and hence we will also denote Legendrian $(2, n)$ torus links by $\la(A_{n-1})$. The Lagrangian fillings that we will consider in this manuscript are all exact, orientable, and embedded in the standard symplectic 4-ball $(\D^4, \lambda_{st})$, whose boundary is the standard contact 3-sphere $(\S^3, \xi_{st})$.

	 	\begin{center}
		\begin{figure}[h!]{ \includegraphics[width=.8\textwidth]{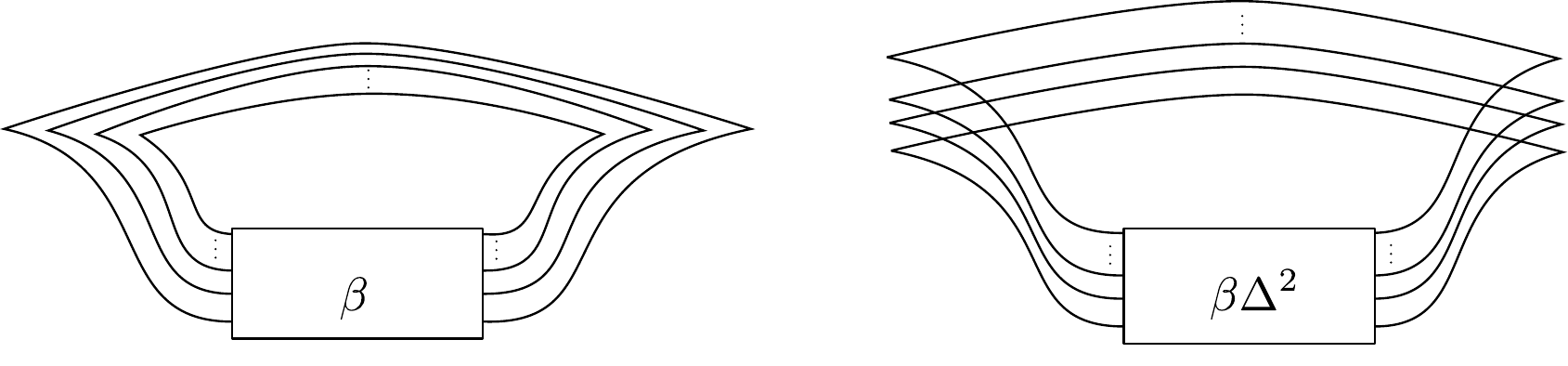}}\caption{Front projections of the Legendrian isotopic links given as the rainbow closure (left) and $(-1)$-framed closure (right) of the positive braids $\beta$ and $\beta\Delta^2$. Here $\Delta$ denotes a half twist of the braid.}
			\label{fig: BraidClosure}\end{figure}
	\end{center}

\subsection{Equivalence of Lagrangian fillings} This article has two primary independent contributions, one geometric and the other algebraic. We begin by sketching the two constructions of Lagrangian fillings needed to state our geometric result. The first general method of constructing Lagrangian fillings was given by Ekholm-Honda-K\'alm\'an in \cite{EHK}. In their construction, a filling $L$ of $\la(\beta)$ is given by a series of elementary cobordisms consisting of (1) traces of Legendrian isotopies, (2) pinching cobordisms, which can be seen as 0-resolutions of certain crossings, and (3) minimum cobordisms. In the case of $\la(A_{n-1}),$ the authors construct a family of $L_\sigma$ Lagrangian fillings using only pinching and minimum cobordisms, and label them by the permutation $\sigma$ in $S_n$ specifying an order of resolving the $n$ crossings of $\la(A_{n-1})$. The Lagrangian fillings constructed in \cite{EHK} were then separated into distinct Hamiltonian isotopy classes and indexed to 312-avoiding permutations by Yu Pan in \cite{YuPan}, expanding on the Floer-theoretic methods of \cite{EHK}. 312-avoiding permutations are permutations $\sigma$ in $S_n$ avoiding the appearance of numbers $i\, j\, k$ with $i> k>j$ when written in one-line notation. There are a Catalan number $C_{n}=\frac{1}{n+1}{2n\choose n}$ of 312-avoiding permutations in $S_n$, hence Pan's result shows that there are at least $C_{n}$ distinct Lagrangian fillings of $\la(A_{n-1})$ obtained via pinching cobordisms. These fillings will be referred to as pinching sequence fillings and the set of (Hamiltonian isotopy classes of) fillings as $\mathcal{P}_n$.

    In \cite{TreumannZaslow}, Treumann and Zaslow gave an alternative construction of a Catalan number of Lagrangian fillings of $\la(A_{n-1})$ and distinguished them using the microlocal theory of sheaves. These Lagrangian fillings are represented by planar trivalent graphs and indexed by the triangulated $(n+2)$-gons dual to such graphs. Given a triangulation $\mathcal{T}$ of a regular $(n+2)$-gon, we will denote the filling represented by the 2-graph dual to $\mathcal{T}$ by $L_{\mathcal{T}}$. 
    Casals and Zaslow then generalized the construction of \cite{TreumannZaslow} to the setting of positive braid closures in \cite{CasalsZaslow} with their construction of Legendrian weaves. We refer to Lagrangian fillings arising from this construction as weave fillings and denote the set of (Hamiltonian isotopy classes of) weave fillings of $\la(A_{n-1})$ as $\mathcal{W}_n$. We also refer to the elementary cobordism in this construction as a $D_4^-$ cobordism after Arnold's classification of singularities of fronts \cite{ArnoldSing}. 
    In Section \ref{sec: prelims} we give an explicit description of both a pinching cobordism and a $D_4^-$ cobordism. In Section \ref{section:geometric}, we produce a Hamiltonian isotopy between the local model describing an elementary pinching cobordism and the local model describing the $D_4^-$ cobordism. This equivalence is then used to prove our first main result:
 

	\begin{theorem}\label{thm: isotopy1}
		For any exact Lagrangian filling of	$\la(\beta)$ constructed via a sequence of pinching cobordisms and traces of Reidemeister III moves, there is unique a Hamiltonian isotopic weave filling. 

	
\end{theorem}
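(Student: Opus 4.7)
The plan is to reduce the theorem to a local comparison of elementary cobordisms and then globalize by induction on the length of the pinching sequence. A pinching-sequence filling is, by hypothesis, a concatenation of three pieces: (i) traces of R3 Reidemeister moves, (ii) pinch cobordisms at individual crossings $\sigma_i$, and (iii) terminal minimum cobordisms capping off unknots. A weave filling is assembled from trivalent $D_4^-$ vertices together with boundary cap data for each connected component of the weave complement. I would set up a dictionary that associates to each pinch on crossing $\sigma_i$ a local $D_4^-$ piece of a weave, to each R3 trace a Yang--Baxter (triple-point) move on weaves, and to each terminal minimum a boundary cap of the weave. The goal is then to show that applying this dictionary to a given pinching sequence produces a weave whose associated Lagrangian filling is Hamiltonian isotopic to the input pinch filling.

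The crux is the local equivalence between a pinch cobordism and a $D_4^-$ cobordism in a Darboux neighborhood of the crossing where the pinch is performed. Both produce exact Lagrangian surfaces with the same Legendrian boundary, namely the $0$-resolution of the crossing. I would write each local model in the front projection, verify that both have the same standard Legendrian behavior at the boundary of the neighborhood, and then construct an explicit, compactly supported Hamiltonian isotopy carrying one local Lagrangian to the other. Concretely, the pinch cobordism is modeled on the Lagrangian saddle smoothing of a transverse double point, while the $D_4^-$ cobordism is the Lagrangian projection of a trivalent front singularity; an interpolation between generating families for these two local models, followed by Moser's trick within the neighborhood to upgrade the resulting symplectic isotopy to a Hamiltonian one, should produce the desired isotopy. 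This coordinate computation is the principal technical obstacle and the step on which the rest of the argument hinges.

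With the local equivalence in hand, globalization proceeds by induction on the number of pinches in the sequence. At each stage, the partial filling agrees with the corresponding partial weave filling outside a compact neighborhood of the newly pinched crossing, so the local Hamiltonian isotopy of the previous paragraph extends by a cutoff function to an ambient Hamiltonian isotopy of the partial filling without affecting the already-matched portion. For the R3 traces of step (i), the Yang--Baxter move on weaves is implemented by a Hamiltonian isotopy of the corresponding weave filling in \cite{CasalsZaslow}, which is compatible with the inductive step, and the terminal minima in (iii) correspond directly to boundary cap data. Uniqueness of the Hamiltonian isotopy class of the output weave is essentially automatic once the dictionary is set: any two weaves produced by the procedure from the same pinching sequence differ only by planar isotopies of trivalent graphs and the elementary weave equivalences, each of which is realized by a Hamiltonian isotopy of the associated filling, again by \cite{CasalsZaslow}.
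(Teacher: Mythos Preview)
Your overall architecture matches the paper's exactly: reduce to a local comparison of the pinch cobordism with the $D_4^-$ cobordism (this is the paper's Proposition~\ref{lemma:local models}), identify R3 traces with hexavalent weave vertices, and then assemble globally by concatenation. Where you diverge is in the mechanism for the local step. You propose an interpolation of generating families followed by a Moser-type argument; the paper instead writes down an explicit movie of fronts for a generic perturbation of the $D_4^-$ singularity, passes to the Lagrangian projection via the Ng resolution, and observes directly that the resulting picture differs from the pinch cobordism only by a rotation of the strands before the $0$-resolution. The exact Lagrangian isotopy is then visible as a movie of movies, and the upgrade to Hamiltonian isotopy uses the standard fact that a compactly supported exact Lagrangian isotopy is Hamiltonian---no Moser argument is needed. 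The paper's approach buys concreteness and, crucially, explicit control over Reeb chords throughout the isotopy; this is the point you do not address and which the paper flags as the main technical care required, since one must ensure that no spurious Reeb chords appear during the interpolation so that the Lagrangian projection stays embedded. Your generating-family interpolation would have to verify the same embeddedness, and that is not automatic from the setup you describe.
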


An immediate consequence of Theorem \ref{thm: isotopy1} is that the two sets of a Catalan number of (Hamiltonian isotopy classes of) exact Lagrangian fillings $\mathcal{W}_n$ and $\mathcal{P}_n$ constructed in \cite{TreumannZaslow} and \cite{EHK} coincide. This confirms statements appearing without proof in \cite[Section 2.3]{TreumannZaslow} and in \cite[Section 6]{STWZ} to that effect. The correspondence is also in agreement with the conjectured ADE-classification of exact Lagrangian fillings \cite[Conjecture 5.1]{CasalsLagSkel} which predicts precisely $C_n$ distinct fillings of $\la(A_{n-1})$ up to Hamiltonian isotopy. 
Combinatorially, Theorem \ref{thm: isotopy1} implies that for any 312-avoiding permutation $\sigma$, the filling $L_{\sigma}$ is Hamiltonian isotopic to the filling $L_{\mathcal{T}}$ for a unique triangulation $\mathcal{T}$. In Subsection \ref{Sub: combinatorics}, we give an explicit recipe for relating $\sigma$ and the corresponding triangulation $\mathcal{T}_\sigma$ for Hamiltonian isotopic fillings of $\la(A_{n-1})$ based on a combinatorial bijection of \cite{RegevAlon2013Abbt}. 


\subsection{K\'alm\'an loop orbits}

Theorem \ref{thm: isotopy1} appears as a protagonist in another central narrative of our study, an exploration of the orbital structure of the K\'alm\'an loop action on Lagrangian fillings of $\la(A_{n-1})$. Introduced by the eponymous mathematician in \cite{Kalman}, the K\'alm\'an loop is a Legendrian isotopy that acts on the set of fillings of a Legendrian torus link by permuting the order in which crossings are resolved by elementary cobordism. For weave fillings, the K\'alm\'an loop action is readily understood by the combinatorics of the triangulation of the dual $(n+2)$-gon under the action of rotation. Theorem \ref{thm: isotopy1} therefore allows us to geometrically deduce the orbital structure of the K\'alm\'an loop action on the set of pinching sequence fillings where it is otherwise more mysterious.

Second, independently of our geometric result, we also give a Floer-theoretic proof of the orbital structure of the K\'alm\'an loop by examining its action on the augmentation variety $\Aug(\la(A_{n-1}))$. The augmentation variety is a Floer-theoretic invariant associated to a Legendrian link. In \cite{EHK}, it was shown that a filling of a Legendrian $\la$ endowed with a choice of a local system can be interpreted geometrically as a point in the augmentation variety $\Aug(\la)$. In this way, the augmentation variety can be thought of as a moduli space of fillings for a given Legendrian. The K\'alm\'an loop induces an automorphism of the augmentation variety and we can study this automorphism to understand the orbital structure of the K\'alm\'an loop action on fillings.

In this setting we introduce our second protagonist, a set of regular functions $\Delta_{i, j}$ on $\Aug(\la(A_{n-1}))$, which we show are indexed by diagonals of an $(n+2)$-gon. These regular functions admit an additional characterization as continuants, recursively defined polynomials studied by Euler in the context of continued fractions \cite{Euler}. This characterization leads to the appearance of a key supporting character, Euler's identity for continuants. Continuants naturally appear in the definition of the augmentation variety of $\la(A_{n-1})$ \cite{CGGS1}, and in Section \ref{section:algebraic} we show that the action of the K\'alm\'an loop is identical a special case of Euler's identity for continuants. In this sense, we may interpret the K\'alm\'an loop action on the augmentation variety as a Floer-theoretic manifestation of Euler's identity for continuants. Conversely, our geometric story may therefore be characterized as a somewhat convoluted proof of the continuant identity through contact geometry.


Paralleling the geometric story, we prove in Subsection \ref{sub:monomials} that the $\Delta_{i,j}$ give coordinate functions on the toric chart in $\Aug(\la(A_{n-1}))$ induced by a pinching sequence filling. From this algebraic argument, we conclude that the orbital structure of the K\'alm\'an loop corresponds precisely to the orbits of triangulations under rotation. The main results of this story are summarized in the two-part theorem below.

\begin{theorem}\label{thm: orbit size}
The action of the K\'alm\'an loop on the set $\mathcal{P}_n$, the Catalan number of exact Lagrangian fillings of $A$-type satisfies: 
\begin{enumerate}
    \item 	The number of K\'alm\'an loop orbits of fillings of $\la(A_{n-1})$ is 
		$$\frac{C_{n}}{n+2}+\frac{C_{n/2}}{2}+\frac{2C_{(n-1)/3}}{3} $$
		
		where the terms with $C_{n/2}$ and $C_{n/3}$ appear if and only if the indices are integers.\\
		
	\item   The $\Delta_{i, j}$ functions satisfy the equation $$\vartheta(\Delta_{1,k+1})+(-1)^{n-1}\Delta_{k,n+2}=-\Delta_{2,k}(\Delta_{1, n+2}-1)$$
	as polynomials in $\Z[z_1, \dots z_n]$.
\end{enumerate}
	\end{theorem}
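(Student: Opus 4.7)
For part (1), my strategy is to reduce the orbit count to a classical combinatorial problem by invoking Theorem \ref{thm: isotopy1}. That theorem, combined with the bijection between weave fillings of $\la(A_{n-1})$ and triangulations of a regular $(n+2)$-gon, identifies $\mathcal{P}_n$ with the set of such triangulations. Under this identification the K\'alm\'an loop is rotation of the polygon by $2\pi/(n+2)$, since cyclically rotating the braid $\sigma_1^n$ corresponds to cyclically relabeling the boundary vertices. The orbit count then becomes the number of orbits of the cyclic group $C_{n+2}$ acting by rotation on the $C_n$ triangulations, which I compute with Burnside's lemma.

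The Burnside input is the number of triangulations fixed by each rotation. Any triangulation with a nontrivial rotational symmetry must contain a central fixed triangle or a central fixed edge, forcing the order of the rotation to be $2$ or $3$. Order-$2$ symmetry requires $n+2$ even, i.e.\ $n$ even, and cutting along the fixed central diagonal gives a bijection between symmetric triangulations and triangulations of a single $(n/2+2)$-gon, producing $C_{n/2}$ fixed points. Order-$3$ symmetry requires $3 \mid (n+2)$, i.e.\ $n \equiv 1 \pmod 3$, and cutting around the central triangle produces $C_{(n-1)/3}$ fixed points; the two rotations of order $3$ each contribute, yielding the factor of $2$. Summing fixed-point counts and dividing by $n+2$ gives the stated formula.

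For part (2), the plan is a direct polynomial computation using the continuant description of the $\Delta_{i,j}$. Since the diagonal from vertex $i$ to vertex $j$ of the $(n+2)$-gon corresponds to the continuant $K(z_i, z_{i+1}, \dots, z_{j-2})$, the K\'alm\'an loop $\vartheta$ acts on these polynomials by a cyclic shift of the $z$-variables. Thus $\vartheta(\Delta_{1,k+1})$ becomes a continuant in the shifted variables that must be re-expressed in the original variables $z_1, \dots, z_n$ using the frozen boundary relation of the augmentation variety; this re-expression is precisely what introduces the correction factor $\Delta_{1,n+2}-1$. I then apply Euler's three-term identity for continuants,
\begin{equation*}
K(a_1, \dots, a_m)\, K(a_{i}, \dots, a_j) \;=\; K(a_1, \dots, a_j)\, K(a_{i}, \dots, a_m) \;\pm\; K(a_1, \dots, a_{i-1})\, K(a_{j+1}, \dots, a_m),
\end{equation*}
with the cut points chosen to match the diagonal endpoints $(1,k+1)$, $(k,n+2)$, and $(2,k)$, so that after collecting terms one recovers the claimed identity.

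The main technical obstacle is bookkeeping. Returning $\vartheta(\Delta_{1,k+1})$ to the fixed variable set requires careful use of the frozen boundary relation, and continuants carry a determinantal parity that is easy to mis-track. The $(-1)^{n-1}$ sign in the identity is a direct shadow of the overall length $n$ of the continuant $\Delta_{1,n+2}$, and checking that Euler's identity, once fully specialized to these indices, produces exactly the right-hand side $-\Delta_{2,k}(\Delta_{1,n+2}-1)$ with the correct sign is where nearly all the work lies. Once this identity is verified, the algebraic orbital structure is an immediate consequence: iterating $\vartheta$ shows that the orbits of $\vartheta$ acting on the $\Delta_{i,j}$ coordinates correspond precisely to the orbits of diagonals under rotation of the $(n+2)$-gon, giving an alternate derivation of (1).
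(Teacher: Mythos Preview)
Your plan for Part~(1) is exactly the paper's geometric proof at the end of Section~\ref{section:geometric}: invoke Theorem~\ref{thm: isotopy1} to identify $\mathcal{P}_n$ with triangulations of the $(n+2)$-gon, observe that the K\'alm\'an loop acts as rotation, and read off the orbit count. The paper simply quotes the orbit formula from Subsection~\ref{sub: Orbital structure} rather than deriving it, so your explicit Burnside computation is a welcome addition but not a new idea.

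For Part~(2) your endgame is the same as the paper's---reduce to Euler's continuant identity---but your description of how $\vartheta$ acts contains a genuine slip that would derail the computation. You write that $\vartheta$ ``acts on these polynomials by a cyclic shift of the $z$-variables'' and that one then uses ``the frozen boundary relation of the augmentation variety'' to return to the original variables. But Part~(2) is asserted as an identity in $\Z[z_1,\dots,z_n]$, not on the augmentation variety, so you cannot impose $\Delta_{1,n+2}=1$ anywhere in the argument. More to the point, $\vartheta$ is \emph{not} a cyclic shift at the polynomial level: by K\'alm\'an's computation (Subsection~\ref{sub: Kalman prelims}) one has $\vartheta(z_1)=-\Delta_{2,n+2}$ as a definition, and $\vartheta(z_i)=z_{i-1}$ for $i\ge 2$. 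The paper's proof (Theorem~\ref{thm: signs}) uses this directly: expand $\Delta_{1,k+1}=z_1\Delta_{2,k+1}+\Delta_{3,k+1}$ via the continuant recursion, apply $\vartheta$ to get $\vartheta(\Delta_{1,k+1})=-\Delta_{2,n+2}\Delta_{1,k}+\Delta_{2,k}$, and then the desired equation rearranges to the special case~(\ref{eq Euler}) of Euler's identity. No boundary relation is invoked; the factor $\Delta_{1,n+2}-1$ appears because Euler's identity produces a $\Delta_{1,n+2}\Delta_{2,k}$ term, not because of any re-expression step.

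Finally, your last sentence---that iterating $\vartheta$ on the $\Delta_{i,j}$ immediately recovers Part~(1)---skips a nontrivial step. Knowing $\vartheta(\Delta_{i,j})=\pm\Delta_{i-1,j-1}$ on $\Aug(\la(A_{n-1}))$ tells you how $\vartheta$ permutes these functions, but to conclude anything about fillings you need to know that the $\Delta_{i,j}$ along the diagonals of $\mathcal{T}_\sigma$ actually form a coordinate basis on the toric chart induced by $L_\sigma$. The paper proves this separately as Proposition~\ref{prop: monomials} (with the explicit monomial formula of Lemma~\ref{lemma: Fibonacci}), and only then combines it with Corollary~\ref{cor Aug rotation} to complete the algebraic proof of Part~(1).
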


Euler's identity for continuants plays a key role in the proof of (2). Following both of our proofs of Theorem \ref{thm: orbit size}, we conclude our exploration of the K\'alm\'an loop with a discussion of its combinatorial properties. In Subsection \ref{sub:orbits}, we describe a method for determining the orbit size of a filling based solely on the associated 312-avoiding permutation.

\begin{theorem}\label{thm: algorithm}
	There exists an algorithm of complexity $O(n^2)$ with input a 312-avoiding permutation $\sigma$ in $S_n$ for determining the orbit size of a pinching sequence filling $L_{\sigma}$ under the K\'alm\'an loop action.
	\end{theorem}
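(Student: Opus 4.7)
The plan is to reduce the question about pinching sequence fillings to a combinatorial question about triangulations of the $(n+2)$-gon, using Theorem \ref{thm: isotopy1} as the bridge, and then to observe that on the triangulation side the K\'alm\'an loop orbit size is very cheap to read off.

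First, I would apply the bijection from Subsection \ref{Sub: combinatorics} to convert the input permutation $\sigma$ into its associated triangulation $\mathcal{T}_\sigma$, represented as the set of $n-1$ interior diagonals, each a pair of vertices in $\Z/(n+2)$. A standard stack sweep through the one-line notation of $\sigma$, of the type used in the Regev--Alon bijection, produces these diagonals in $O(n^2)$ time (in fact, with a small amount of bookkeeping this can be reduced to $O(n)$, but $O(n^2)$ suffices).

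Second, Theorem \ref{thm: isotopy1} identifies $L_\sigma$ with the weave filling $L_{\mathcal{T}_\sigma}$ up to Hamiltonian isotopy, and the K\'alm\'an loop acts on $\mathcal{W}_n$ by rotation of the dual $(n+2)$-gon. Hence the orbit size of $L_\sigma$ is $(n+2)/|\mathrm{Stab}(\mathcal{T}_\sigma)|$, where $\mathrm{Stab}(\mathcal{T}_\sigma)$ denotes the stabilizer in the cyclic group of rotations. By the orbit-count computation in Theorem \ref{thm: orbit size}(1), the only possible stabilizer orders are $1$, $2$, and $3$, so the algorithm only needs to test rotational symmetry by $(n+2)/2$ (when $n$ is even) and by $(n+2)/3$ (when $3$ divides $n+2$). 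For each such $k$, rotate every diagonal $\{i,j\}$ of $\mathcal{T}_\sigma$ to $\{i+k,j+k\} \pmod{n+2}$ and test membership against the stored diagonal set; with a sorted-array or hash representation this test runs in $O(n)$ per diagonal, giving $O(n^2)$ in total for the naive implementation. The orbit size is then $(n+2)/d$, where $d$ is the largest element of $\{1,2,3\}$ for which the rotation test succeeds.

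The total runtime is $O(n^2)$, dominated by the triangulation construction and the symmetry checks. The only conceptually delicate point is the equivalence between the two actions of the K\'alm\'an loop, one on the pinching-sequence side and one on the rotation of triangulations; this equivalence is precisely what Theorem \ref{thm: isotopy1} (together with the explicit bijection of Subsection \ref{Sub: combinatorics}) provides, so the algorithm reduces to a straightforward symmetry check and the correctness follows without further work. The main obstacle, and the reason this is a theorem rather than a remark, is the upstream identification of pinching-sequence fillings with weave fillings and the compatibility of the two K\'alm\'an loop actions under the combinatorial bijection.
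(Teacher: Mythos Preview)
Your proposal is correct but takes a genuinely different route from the paper. The paper does not convert $\sigma$ to its triangulation $\mathcal{T}_\sigma$ and test rotational symmetry of the diagonal set; instead it gives two permutation-intrinsic criteria (Lemmas~\ref{lemma: Algorithm1} and~\ref{lemma: Algorithm2}) that detect directly, by identifying and manipulating certain subwords $\tau$ (respectively $\tau_1,\tau_2$) of $\sigma$, whether $\mathcal{T}_\sigma$ is invariant under rotation by $\pi$ or $2\pi/3$. The paper's algorithm never leaves the one-line notation of $\sigma$: it reconstructs from $\tau$ what $\sigma$ would have to look like if the triangulation were symmetric, and then compares with the actual $\sigma$.

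Your approach is shorter and arguably more transparent, and it makes the $O(n^2)$ bound immediate. What the paper's approach buys is an explicit structural description of exactly which 312-avoiding permutations correspond to symmetric triangulations, phrased entirely in terms of subword patterns; this is of independent combinatorial interest and feeds into the later discussion in Subsections~\ref{sub:rotation} and~\ref{sub:mutation}. Both arguments ultimately rest on the same bridge, namely Corollary~\ref{cor: isotopyAn} and the identification of the K\'alm\'an loop with rotation established at the end of Section~\ref{section:geometric}, so your citation of Theorem~\ref{thm: isotopy1} is appropriate. One small remark: the fact that the only possible stabilizer orders are $1,2,3$ is not a consequence of the orbit-count formula in Theorem~\ref{thm: orbit size}(1) but rather the elementary observation (Subsection~\ref{sub: Orbital structure}) that a triangulation of a polygon can have no other rotational symmetry; your argument is unaffected, but the justification should point there.
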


In addition, we give an entirely combinatorial description of the K\'alm\'an loop action in terms of 312-avoiding permutations as a sequence of edge flips of triangulations. The appearance of triangulated polygons and edge flips is perhaps best explained as the manifestation of the theory of cluster algebras lurking in the background. While cluster theory does not explicitly appear in any of our proofs, we highlight the connections where relevant.  We refer the reader to \cite{CGGLSS, CasalsWeng, GSW} for more dedicated treatments of cluster structures on the coordinate rings of algebraic varieties associated to Legendrian links.


 \begin{remark}
 As a possible application of Theorem \ref{thm: isotopy1} beyond Lagrangian fillings of $\la(A_{n-1})$, we might hope to describe the orbital structure of fillings of $\la(D_n)$ under the action of analogous Legendrian loops. In this context, the combinatorics of tagged triangulations are the $D$-type analog of the triangulations appearing in $A$-type. However, there is currently no known combinatorial bijection between tagged triangulations and the $D$-type Legendrian weaves constructed in \cite{Hughes2021}.  \hfill $\Box$
 \end{remark}

\subsection*{Organization} In Section \ref{sec: prelims}, we cover the necessary preliminaries, including the constructions of exact Lagrangian fillings, the Legendrian contact differential graded algebra, the K\'alm\'an loop, and related combinatorics. Section \ref{section:geometric} contains the proof of Theorem \ref{thm: isotopy1}, from which we obtain our geometric proof of Theorem \ref{thm: orbit size} as a corollary. In Section \ref{section:algebraic}, we give the Floer-theoretic proof of Theorem \ref{thm: orbit size}, featuring Euler's identity for continuants. Finally, Section \ref{section:combinatorics} presents the orbit size algorithm of Theorem \ref{thm: orbit size}, and we conclude with a combinatorial description of the K\'alm\'an loop action on these permutations.

	\subsection*{Acknowledgements} Many thanks to Roger Casals for his help and encouragement throughout. Thanks also to Lenny Ng for the original question about K\'alm\'an loop orbits that motivated this project. Thanks as well to the anonymous referees for their helpful feedback.

	\section{Preliminaries on Lagrangian fillings and their invariants}\label{sec: prelims}

	We begin with the necessary preliminaries from contact and symplectic topology. The standard contact structure $\xi_{st}$ in $\R^3$ is the 2-plane field given as the kernel of the 1-form $\alpha=dz-ydx$. A link $\lambda \subseteq (\R^3, \xi_{st})$ is Legendrian if $\lambda$ is always tangent to $\xi_{st}$. As $\la$ can be assumed to avoid a point, we can equivalently consider Legendrians $\la$ contained in the contact 3-sphere $(\mathbb{S}^3, \xi_{st})$ \cite[Section 3.2]{Geiges08}. 

	The symplectization of contact $\R^3$ is the symplectic manifold $(\R_t\times \R^3, d(e^t\alpha))$. Given two Legendrian links $\la_-$ and $\la_+$, an exact Lagrangian cobordism $L\subseteq (\R_t\times \R^3, d(e^t\alpha))$ from $\la_-$ to $\la_+$ is a cobordism $\Sigma$ such that there exists some $T>0$ satisfying the following: 

	\begin{enumerate}
		\item $d(e^t\alpha)|_\Sigma=0$ 
		\item $\Sigma\cap ((-\infty, T]\times \R^3)=(-\infty, T]\times \la_-$ 
		\item $\Sigma\cap ([T, \infty)\times \R^3)=[T, \infty) \times \la_+$ 
		\item $e^t\alpha|_\Sigma=df$ for some function $f$ on $\Sigma$. 
	\end{enumerate}

	An exact Lagrangian filling of the Legendrian link $\la\subseteq (\R^3, \xi_{st})$ is an exact Lagrangian cobordism $L$ from $\emptyset$ to $\la$ that is embedded in the symplectization $\R_t\times \R^3$. Equivalently, we consider $L$ to be embedded in the symplectic 4-ball with boundary $\partial L$ contained in the contact 3-sphere $(S^3, \xi_{st})$ \cite[Section 6.2]{ArnoldGivental01}. For $\la(A_{n-1})$, our fillings will be constructed as a series of saddle cobordisms and minimum cobordisms. 

We will depict a Legendrian link $\la$ in either of two projections. The front projection $\pi:(\R^3, \xi_{st}) \to \R^2$ given by $\pi(x, y, z)=(x, z)$ depicts a projection of $\la$ in the $xz-$plane. The Lagrangian projection $\Pi: (\R^3, \xi_{st})\to \R^2$ given by $\Pi(x, y, z)=(x,y)$ depicts $\la$ in the $xy-$plane. In the Lagrangian projection, crossings of $\Pi(\lambda)$ correspond precisely to Reeb chords of $\la$. Reeb chords are integral curves of the Reeb vector field $\partial_z$ that start and end on $\la$. In the front projection, the Legendrian condition $T_x\la \subseteq \ker(dz- y dx)$ implies that $y=\frac{dz}{dx}$. Therefore, Reeb chords are given by pairs of points $(x_1, z_1), (x_2, z_2)$ with $x_1=x_2$ and $\frac{dz_1}{dx_1}=\frac{dz_2}{dx_2}$. The key geometric content in Section \ref{section:geometric} will involve a careful comparison of Reeb chords in the front and Lagrangian projections of slicings of elementary cobordisms.

\subsection{Legendrian weaves}

In this subsection we describe Legendrian weaves, the first of two geometric constructions for producing exact Lagrangian fillings of a Legendrian link $\la(\beta)$. The key idea of Legendrian weaves is to combinatorially encode a \emph{Legendrian} surface $\Lambda$ in the 1-jet space $(J^1\D^2,\xi_{\st})=(T^*\D^2\times \R_z, \ker(dz-\alpha_{\st}))$, by the singularities of its front projection in $D^2\times \R_z$. The Lagrangian projection of $\Lambda$ then yields an exact Lagrangian filling. We describe the general case of Legendrian weave surfaces as given in \cite{CasalsZaslow} for the purposes of Theorem \ref{thm: isotopy1}. The case of 2-stranded positive braids is sufficient for all of the content pertaining to K\'alm\'an loop orbits in this paper.

The contact geometric setup of the Legendrian weave construction is as follows. 
Let $\beta$ be a positive braid and let $\Delta:=\prod_{i=1}^{N-1}\prod_{j=1}^{N-i} \sigma_j$ denote a half twist of the braid. We construct a filling of $\la(\beta)$ -- equivalently, the $(-1)$-framed closure of $\beta\Delta^2$, pictured in Figure \ref{fig: BraidClosure} (right) -- by first describing a local model for a Legendrian surface $\La$ in $J^1D^2$. By the Weinstein Neighborhood Theorem, a Legendrian embedding of $D^2$ into $(\R^5, \xi_{st})$ then gives rise to an embedding of $(J^1D^2, \xi_{st})$ into an open neighborhood of the image of $D^2$ under the embedding. In particular a Legendrian link in $J^1\partial D^2$ is mapped to a Legendrian link in the contact boundary $(\S^3,\xi_{\st})$ of symplectic $(\R^4,\omega_{\st}=d\theta_{\st})$ given as the co-domain of the Lagrangian projection $(\R^5,\xi_{\st})\lr(\R^4,\omega_st)$. Therefore, the boundary $\partial \La$ of our Legendrian surface will be taken to be a positive braid $\beta\Delta^2$ in $J^1S^1$. Under the contactomorphism described above, this positive braid is sent to the standard satellite of the standard Legendrian unknot. Diagramatically, this takes the braid $\beta\Delta^2$ in $J^1S^1$ to the $(-1)$-framed closure of $\beta\Delta^2$ in contact $S^3$.

\subsubsection{$N$-Graphs and Singularities of Fronts} To construct a Legendrian weave surface $\La$ in $J^1D^2,$ we combinatorially encode the singularities of its front projection in a colored graph. 
Local models for these singularities of fronts are classified by work of Arnold \cite[Section 3.2]{ArnoldSing}. The three singularities that appear in our construction describe elementary Legendrian cobordisms and are pictured in Figure \ref{fig: wavefronts}.

	\begin{center}
		\begin{figure}[h!]{ \includegraphics[width=.8\textwidth]{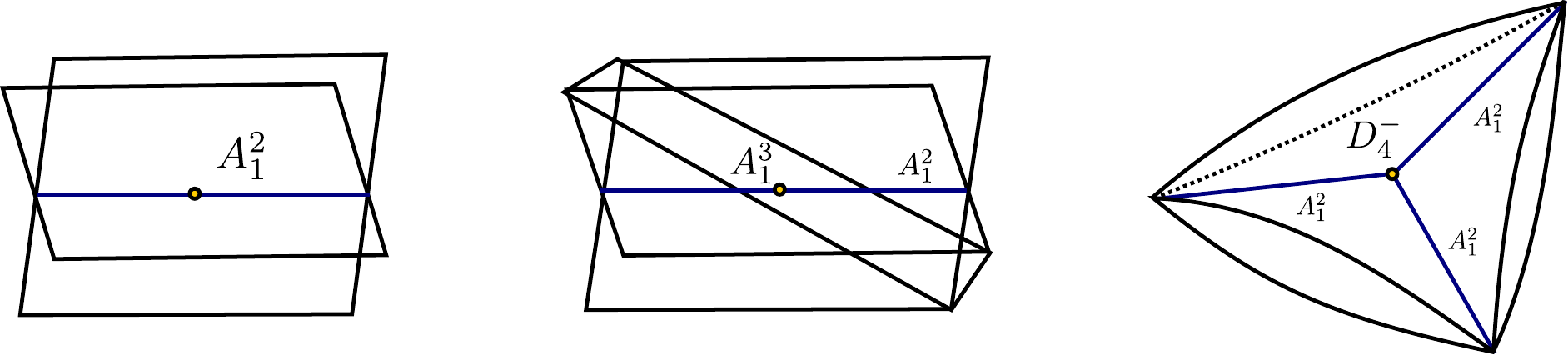}}\caption{Singularities of front projections of Legendrian surfaces. Labels correspond to notation used by Arnold in his classification.}
			\label{fig: wavefronts}\end{figure}
	\end{center}

Since the boundary of our singular surface $\pi(\La)$ is the front projection of an $N$-stranded positive braid, $\pi(\La)$ can be pictured as a collection of $N$ sheets away from its singularities. We describe the behavior at the singularities as follows:

\begin{enumerate}
    \item The $A_1^2$ singularity occurs when two sheets in the front projection intersect. This singularity can be thought of as the trace of a constant Legendrian isotopy in the neighborhood of a crossing in the front projection of the braid $\beta\Delta^2$. 
    \item The $A_1^3$ singularity occurs when a third sheet passes through an $A_1^2$ singularity. This singularity can be thought of as the trace of a Reidemeister III move in the front projection.
    \item A $D_4^-$ singularity occurs when three $A_1^2$ singularities meet at a single point. This singularity can be thought of as the trace of a 1-handle attachment in the front projection. 
\end{enumerate}

Having identified the singularities of fronts of a Legendrian weave surface, we encode them by a colored graph $\Gamma\subseteq D^2$. The edges of the graph are labeled by Artin generators of the braid and we require that any edges labeled $\sigma_i$ and $\sigma_{i+1}$ meet  at a hexavalent vertex with alternating labels while any edges labeled $\sigma_i$ meet at a trivalent vertex. To obtain a Legendrian weave $\Lambda(\Gamma)\subseteq (J^1\D^2,\xi_{\st})$ from an $N$-graph $\Gamma$, we glue together the local germs of singularities according to the edges of $\Gamma$. First, consider $N$ horizontal sheets $\D^2\times \{1\}\sqcup \D^2\times \{2\}\sqcup \dots \sqcup \D^2\times \{N\}\subseteq \D^2\times \R$ and an $N$-graph $\Gamma\subseteq \D^2\times \{0\}$. We construct the associated Legendrian weave $\Lambda(\Gamma)$ as follows  \cite[Section 2.3]{CasalsZaslow}.

	\begin{itemize}
		\item Above each edge labeled $\sigma_i$, insert an $A_1^2$ crossing between the $\D^2\times\{i\}$ and $\D^2\times \{i+1\}$ sheets so that the projection of the $A_1^2$ singular locus under $\pi:\D^2\times \R \to \D^2\times \{0\}$ agrees with the edge labeled $\sigma_i$.   
		\item At each trivalent vertex $v$ involving three edges labeled by $\sigma_i$, insert a $D_4^-$ singularity between the sheets $\D^2\times \{i\}$ and $\D^2\times\{i+1\}$ in such a way that the projection of the $D_4^-$ singular locus agrees with $v$ and the projection of the $A_2^1$ crossings agree with the edges incident to $v$.
		\item At each hexavalent vertex $v$ involving edges labeled by $\sigma_i$ and $\sigma_{i+1}$, insert an $A_1^3$ singularity along the three sheets in such a way that the origin of the $A_1^3$ singular locus agrees with $v$ and the $A_1^2$ crossings agree with the edges incident to $v$.
	\end{itemize} 
	\begin{center}
		\begin{figure}[h!]{ \includegraphics[width=\textwidth]{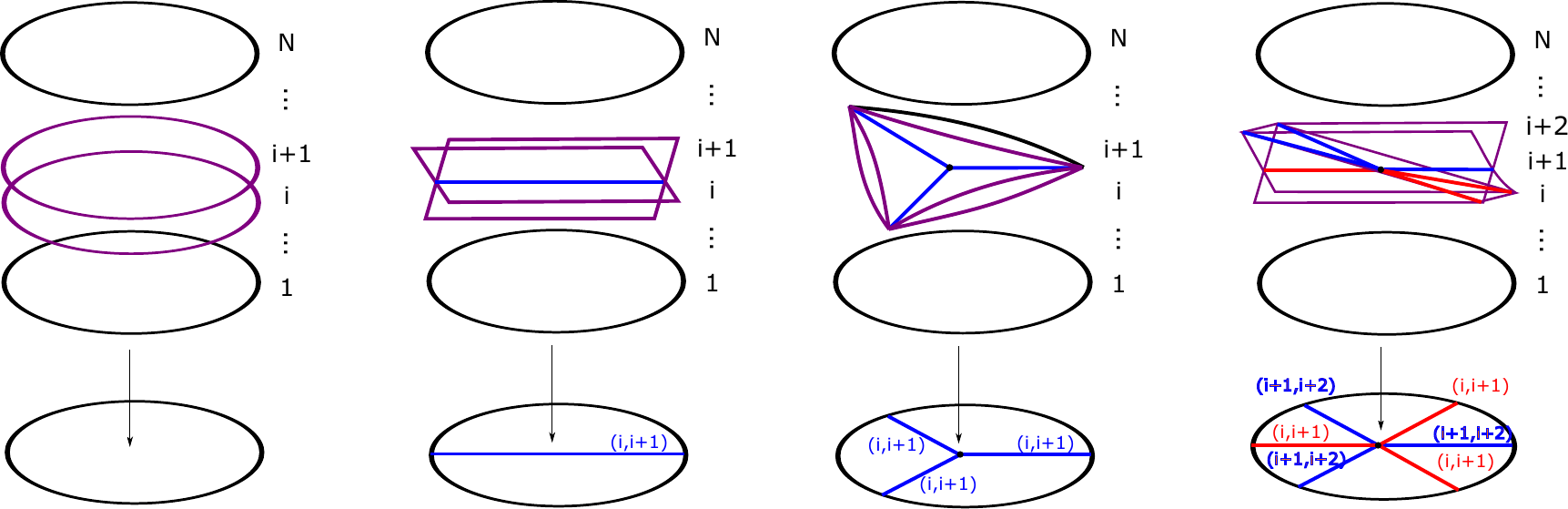}}\caption{The weaving of the singularities pictured in Figure \ref{fig: wavefronts} along the edges of the $N$-graph. Gluing these local pictures together according to the $N$-graph $\Gamma$ yields the weave $\Lambda(\Gamma)$.}
			\label{fig:Weaving}\end{figure}
	\end{center}
	
	If we take an open cover $\{U_i\}_{i=1}^m$ of $\D^2\times \{0\}$ by open disks, refined so that any disk contains at most one of these three features, we can glue together the resulting fronts according to the intersection of edges along the boundary of our disks. Specifically, if $U_i\cap U_j$ is nonempty, then we define $\pi(\La((U_1\cup U_2))$ to be the front resulting from considering the union of fronts $\pi(\La((U_1))\cup\pi(\La((U_j))$ in $(U_1\cup U_2)\times \R$.

	\begin{definition}\label{def: weave}
	    The Legendrian weave $\Lambda(\Gamma)$ is the Legendrian surface contained in $(J^1\D^2, \xi_{\st})$ with front $\pi(\La(\Gamma))=\pi(\La((\cup_{i=1}^m U_i))$ given by gluing the local fronts of singularities together according to the $N$-graph $\Gamma$.
	\end{definition}

	The immersion points of a  Lagrangian projection of a weave surface $\La$ correspond precisely to the Reeb chords of $\La$. In particular, if $\La$ has no Reeb chords, then $\Pi(\La)$ is an embedded exact Lagrangian filling of $\partial(\La)$. In the Legendrian weave construction, Reeb chords correspond to critical points of functions giving the difference of heights between sheets. Every weave surface in this paper admits an embedding where the distance between the sheets in the front projection grows monotonically in the direction of the boundary, ensuring that there are no Reeb chords. See also \cite[Subsection 2.3.2]{TreumannZaslow} 



\subsubsection{The $D_4^-$ cobordism}
Having described Legendrian weave surfaces, we now define a $D_4^-$ cobordism. As the $D_4^-$ singularity is not a generic Legendrian front singularity, we consider a generic perturbation of the $D_4^-$ singularity, as described in \cite[Remark 4.6]{CasalsZaslow} and pictured in Figure \ref{fig: D4Generic} (top). A slicing of the Legendrian front, depicted in Figure \ref{fig: D4Generic} (bottom), gives a movie of fronts describing the cobordism as follows.
Near a 
Reeb chord trapped between two crossings, we apply a Reidemeister I move and Legendrian isotopy to shrink the Reeb chord. We then add a 1-handle to remove this Reeb chord and apply another pair of Reidemeister I moves to simplify to a diagram with one fewer crossing than we started with. The trace of this movie of fronts forms a surface in $J^1[a, b]$ and yields an exact Lagrangian cobordism in symplectic $\R^4$ by taking the Lagrangian projection of its embedding in contact $\R^5$. By convention, we will identify the remaining crossing with the leftmost crossing of the original pair.

 	\begin{center}
		\begin{figure}[h!]{ \includegraphics[width=\textwidth]{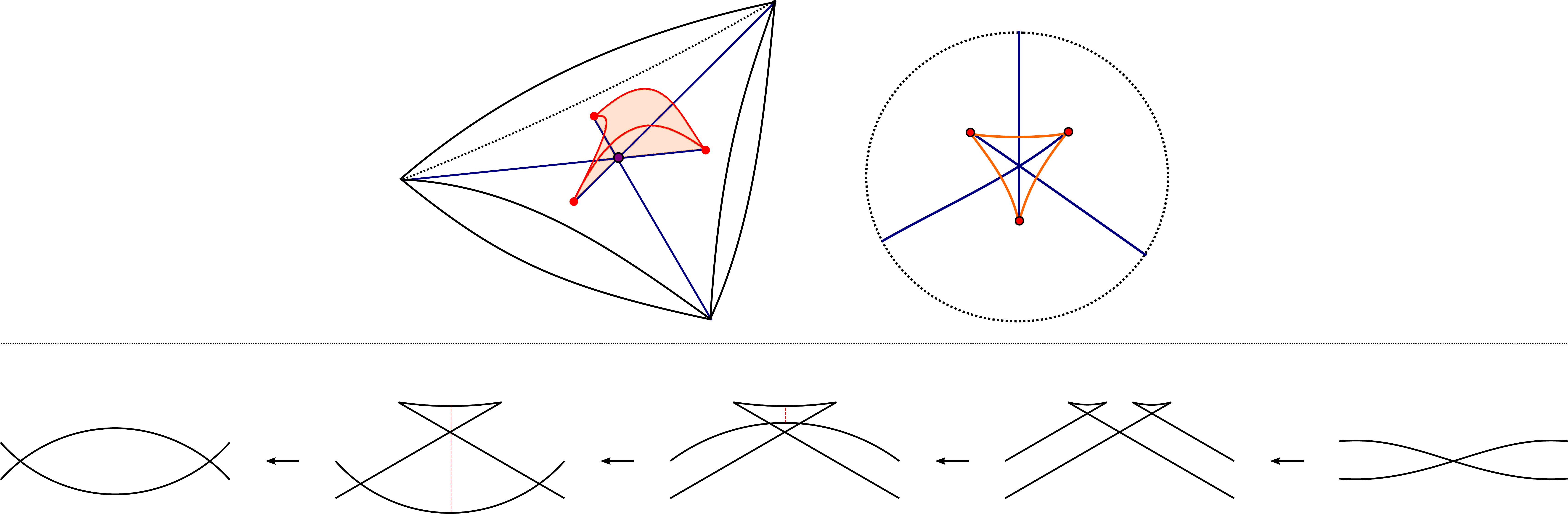}}\caption{A local model of a generic perturbation of the $D_4^-$ singularity as the front projection of a Legendrian surface (top) and as a movie of 1-dimensional fronts (bottom). The Reeb chord is depicted as a dashed red line. We first apply a Reidemeister I move before adding a 1-handle and applying two more Reidemeister I moves to arrive at a diagram with a single crossing.}
			\label{fig: D4Generic}\end{figure}
	\end{center}


\subsubsection{Vertical weaves}

In order to relate weave fillings to pinching sequence fillings, we will make use of an equivalent way of describing weaves, combinatorially presented in \cite{CGGS1}. This construction arranges the $N$-graph vertically, with $\partial D^2$ at the top and rest of the $N$-graph appearing below. This construction has the advantage of allowing us to unambiguously associate elementary Lagrangian cobordisms. 

Let $\Gamma\subseteq D^2$ be an $N$-graph and $\La(\Gamma)\subseteq J^1(D^2)$ be the associated weave. In order to produce the associated vertical weave $\La_V(\Gamma)$, we foliate the disk by copies of $S^1$, as shown in Figure \ref{fig: vertical} (left). We then consider a diffeomorphism $\varphi$ taking $D^2\backslash\{pt\}$ to $\S^1\times (-\infty, 0]$. We define $\varphi$ in such a way that the image of the foliation of $D^2\backslash\{pt\}$ is a foliation of $\R\times (-\infty, 0]$ by horizontal lines that are identified at $\pm \infty$ to form a foliation of $\S^1\times (-\infty, 0]$. The diffeomorphism $\varphi$ induces a contactomorphism $\Tilde{\varphi}:J^1(D^2)\to J^1(D^2)$. 

\begin{definition}
    The vertical weave $\La_V(\Gamma)$ is the Legendrian weave encoded (in the sense of Definition \ref{def: weave}) by the $N-$graph $\varphi(\Gamma)$. 
\end{definition}



After a planar isotopy of $\Gamma$, corresponding to a Legendrian isotopy of $\La(\Gamma)$, we can assume that there are no pairs of hexavalent or trivalent vertices appearing in the same horizontal strip $\R\times \{t\}$. The purpose of this modification is to unambiguously decompose a weave filling into elementary Lagrangian cobordisms in order to relate it to a decomposable Lagrangian filling in the symplectization of contact $\R^3$. Other than our manipulation of the ambient contact manifold, the vertical weave construction is identical to the Legendrian weaves described above. See Figure \ref{fig: vertical} for an example comparing Legendrian weave fillings of $\la(A_5)$.



\subsubsection{2-graphs and weave fillings of $\la(A_{n-1})$}
For our applications involving fillings of $\la(A_{n-1})$, we need only consider the case of $N=2$. 2-graphs are in combinatorial bijection with both binary trees and triangulations of $n+2$-gons, and we will make use of both of these bijections. In particular, we view the boundary of an $n+2$-gon as a disk so that the 2-graph dual to a triangulation represents a Legendrian weave surface with boundary $\la(A_{n-1})$. We list our choice of conventions below for ease of reference.

\begin{itemize}
    \item In a vertical weave, we encode $\la(\beta)$ with the braid word $\Delta\beta\Delta$ appearing at $\R\times \{0\}$.
    \item In a vertical weave, the edge exiting below a trivalent vertex with incoming edges $i$ and $i+1$ inherits the label $i$.
    \item In a 2-graph $\Gamma$ dual to a triangulation $\mathcal{T}$, the edge of $\Gamma$ most immediately clockwise from vertex $i$ of $\mathcal{T}$ is labeled by $i$. 
\end{itemize}

	\begin{center}
		\begin{figure}[h!]{ \includegraphics[width=\textwidth]{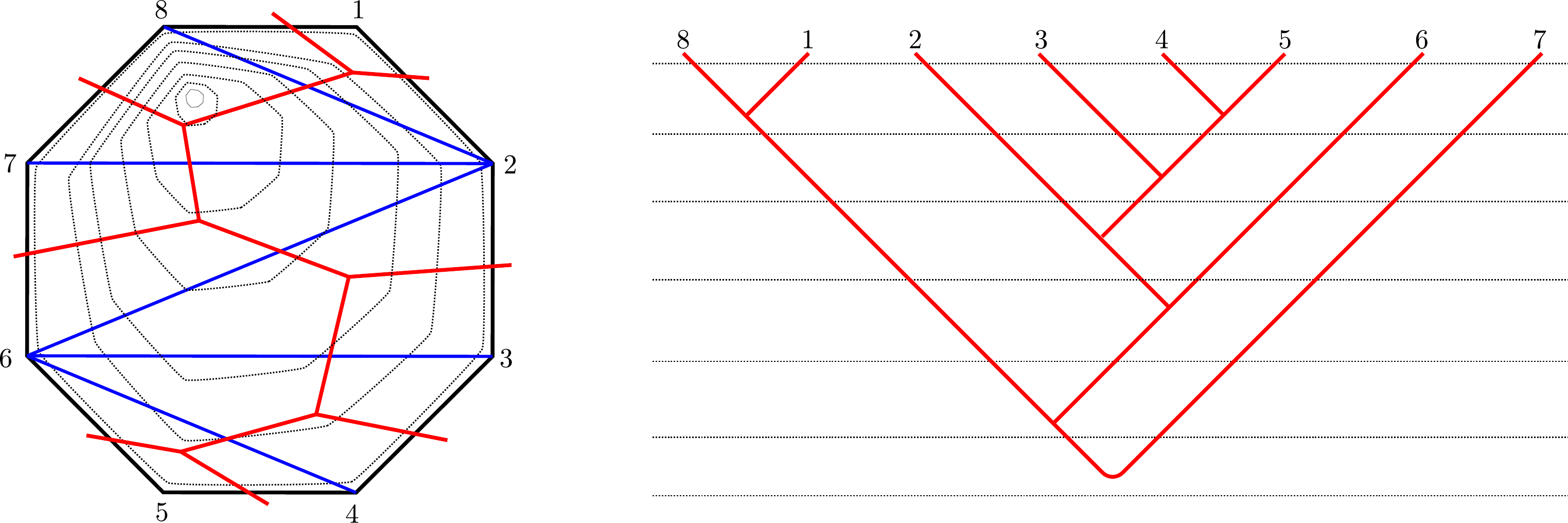}}\caption{A pair of 2-graphs representing the same weave filling of $\la(A_5)$. On the left, the 2-graph $\Gamma$ is inscribed in its dual triangulation of the octagon. On the right, the corresponding vertical weave is the image of the diffeomorphism $\varphi$. The edges of the vertical 2-graph are labeled by the nearest counterclockwise label of the dual triangulation. The dotted lines on the left give a foliation of $D^2$, corresponding to the foliation of $\R\times (-\infty, 0]$ depicted on the right.} 
			\label{fig: vertical}\end{figure}
	\end{center}

Note that our choice of labeling edges differs slightly from the conventions of \cite{CGGS1}. The choice of labeling given there corresponds to resolving the leftmost crossing of the pair in the $D_4^-$ cobordism. With our choice of conventions, we will be able to show that the combinatorial bijection described in Subsection \ref{Sub: combinatorics} relating 312-avoiding permutations to triangulations of the $n+2$-gon yields Hamiltonian isotopic fillings.

\subsection{The Legendrian contact DGA and the augmentation variety}

In this subsection, we describe the Legendrian contact DGA, a Floer-theoretic invariant of Legendrian knots and their exact Lagrangian fillings. We first give the Ekholm-Honda-K\'alm\'an construction for exact Lagrangian fillings cobordisms and then describe the necessary Floer-theoretic background.  

\subsubsection{The pinching cobordism and pinching sequence fillings}\label{sub: pinching cobordism}

The following definition gives a condition for being able to perform a pinching cobordism at a given crossing. See also \cite[Definition 6.2]{EHK} and \cite[Section 2.1]{CasalsNg}.

\begin{definition}
A crossing in the Lagrangian projection $\Pi(\la)$ of a Legendrian $\la$ is contractible if there is a Legendrian isotopy of $\la$ inducing a planar isotopy of $\Pi(\la)$ making the length of the corresponding Reeb chord arbitrarily small.  
\end{definition}


	We now describe the precise topological construction of the elementary cobordisms defining pinching sequence fillings. Consider a neighborhood of a contractible crossing depicted in the Lagrangian projection. 
Attaching a 1-handle at the crossing yields an exact Lagrangian cobordism in the symplectization $(\R_t\times \R^3, d(e^t (dz-ydx)))$ \cite[Section 6.5]{EHK}. In the Lagrangian projection, this 1-handle attachment is diagrammatically given as a 0-resolution of the crossing, as depicted in Figure \ref{fig: pinch}. If $\la$ is the rainbow closure of a positive braid, as is the case for $\la(A_{n-1})$, then every crossing of the braid is contractible \cite[Proposition 2.8]{CasalsNg}.  

	

		\begin{center}
		\begin{figure}[h!]{ \includegraphics[width=.4\textwidth]{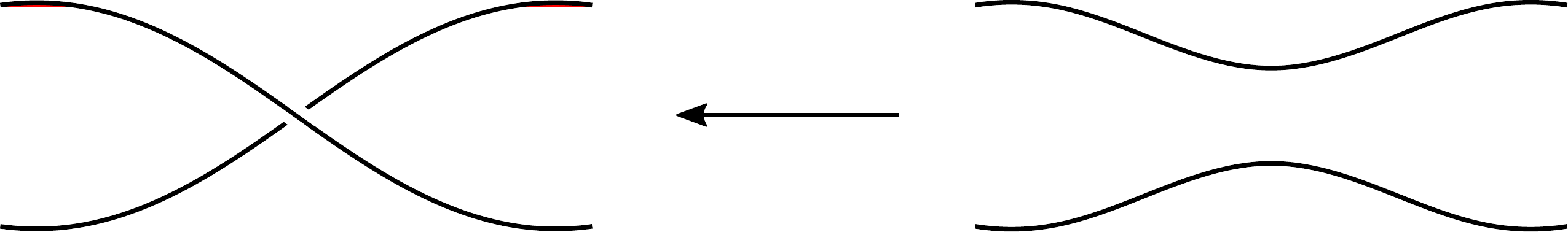}}\caption{A local model of a pinching cobordism as a 0-resolution of a contractible crossing in the Lagrangian projection. The direction of the arrow indicates a cobordism from the concave end to the convex end.}
			\label{fig: pinch}\end{figure}
	\end{center}
	
	Let us consider $\la\subseteq (\R^3, \xi_{st})$ and its front projection $\pi(\la)$. 
	In order to describe a pinching cobordism in terms of a projection of $\la$, we introduce the Ng resolution. This is a Legendrian isotopy $\la_t$ such that $\la_0=\la$ and the Lagrangian projection $\Pi(\la_1)$ can be obtained from the front projection $\pi(\la_1)$ by smoothing all left cusps and replacing all right cusps with small loops \cite{Ng03}. See Figure \ref{fig: T(2,n)} for an example. A pinching cobordism in the front projection of the link $\la(\beta)$ is then given by first taking the Ng resolution of $\la(\beta)$, performing a 0-resolution at a crossing in the Lagrangian projection as specified above, and then undoing the Ng resolution.

	 	\begin{center}
		\begin{figure}[h!]{ \includegraphics[width=.8\textwidth]{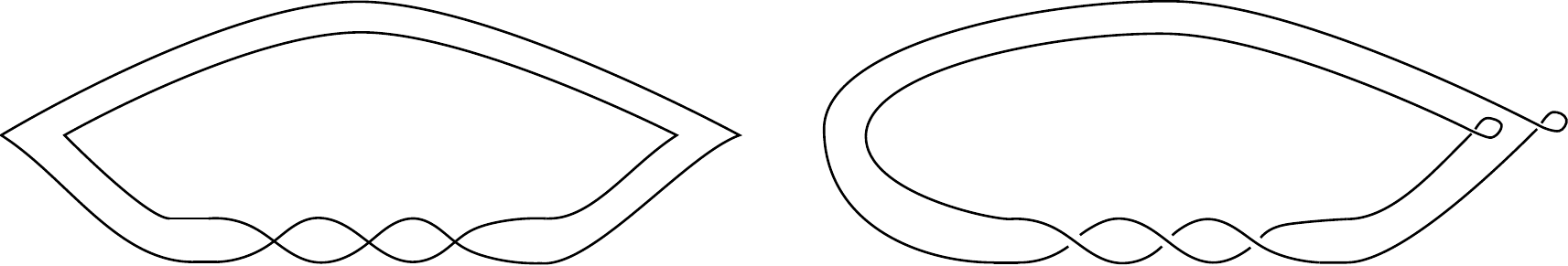}}\caption{A front projection of $\la(A_{2})$ (left) and its Ng resolution (right). The three leftmost crossings are contractible.}
			\label{fig: T(2,n)}\end{figure}
	\end{center}

Given that $\la(A_{n-1})$ has $n$ crossings, a pinching sequence filling can be characterized by a permutation $\sigma$ in $S_n$. Such a permutation specifies an order in which to apply these elementary cobordisms to the $n$ contractible 
crossings in the Ng resolution of $\la(A_{n-1})$. Given a permutation $\sigma$, we will denote it in one line notation $\sigma(1)\dots \sigma(n)$. If $\sigma$ is of the form $\sigma(1) \dots i \, j \dots k\dots \sigma(n)$ for $i>k>j$, the permutation $\sigma'=\sigma(1)\dots j\, i \dots k\dots\sigma(n)$ obtained by interchanging $i$ and $j$ gives an order of resolving crossings that yields the same Floer-theoretic invariant\footnote{The two fillings corresponding to $\sigma$ and $\sigma'$ yield identical augmentations $\epsilon_{\sigma}$ and $\epsilon_{\sigma'}$ of the DGA $\mathcal{A}(\la(A_{n-1}))$. This is because the presence of the crossing labeled $j$ prevents the existence of any holomorphic strip with positive punctures occurring at both crossings $i$ and $k$. Therefore, resolving crossing $k$ (resp. $i$) has no effect on the generator $z_i$ (resp. $z_k$) in the DGA $\mathcal{A}(\la(A_{n-1}))$.} \cite{YuPan}. This leads us to consider only a subset of permutations in $S_n$.

\begin{definition}
A 312-avoiding permutation is a permutation $\sigma\in S_n$ such that any triple of letters $i, j, k$ appearing in order in $\sigma$ does not satisfy the inequality $i>k>j$. 
\end{definition} 

Distinct 312-avoiding permutations yield distinct Hamiltonian isotopy invariants of exact Lagrangian fillings, i.e. restricting the indexing set from $S_n$ to 312-avoiding permutations yields the existence of at least a Catalan number of fillings of $\la(A_{n-1})$ up to Hamiltonian isotopy \cite[Theorem 1.1]{YuPan}.

	\subsubsection{The Legendrian contact DGA}\label{sub: DGA prelims}
	For a Legendrian link $\la$, the Legendrian contact differential graded algebra (DGA) is a powerful Floer-theoretic invariant of $\la$ \cite{Chekanov}. We denote the DGA of $\la$ by $\mathcal{A}(\la; R)$, or by $\mathcal{A}(\la)$ when we wish to suppress the dependence on the coefficient ring $R$. In our description below we will generally take $R$ to be $\Z[H_1(\la)]$ or $\Z[H_1(L)]$ for an exact Lagrangian cobordism $L$. 
	We refer the interested reader to \cite{EtnyreNg19} for a more general introduction to the Legendrian contact DGA and \cite[Section 3]{CasalsNg} for a discussion of different choices of coefficient rings. 
	
	To describe the algebra $\mathcal{A}(\la; R)$, we introduce the auxiliary data of a set of marked points on $\la$ and a corresponding set of capping paths.
	We label a marked point on the component $\la_a$ of $\la$ by $t_a^{\pm 1}$. Denote the collection of all such formal invertible variables by $T=\{t_1^{\pm 1}, \dots, t_m^{\pm 1}\}$. Given a Reeb chord $z$ with ends $z_a$, $z_b$ lying on components $\la_a$ and $\la_b$, a capping path $\gamma_z$ is the concatenation of paths following the orientation of $\la$ from $z_a$ to $t_a$ and $t_b$ to $z_b$. Here we require that $z_a$ corresponds to the undercrossing of $z$ in the Lagrangian projection. If we require one marked point for every component of $\la$, then we can think of $T$ as encoding $H_1(\la)$. The data of the DGA is then given as follows.  
	
	\textbf{Generators:} For a knot $\la$, the Legendrian contact DGA is freely generated over $R=\Z[T]$ by the Reeb chords of $\la\subseteq (\R^3, \xi_{st})$. In the Lagrangian projection, we can equivalently think of these generators as the crossings of $\la$.  

	\textbf{Grading:} Each $t_i$ and $t_i^{-1}$ is assigned grading 0. We define the grading for a Reeb chord $z$ as follows. As we traverse the capping path $\gamma_z$, the unit tangent vector to $\Pi(\la)$ makes a number of counterclockwise revolutions. We can perturb $\la$ in such a way that the tangent vectors at a crossing of $\Pi(\la)$ are always orthogonal and the number $r(\gamma_z)$ of such revolutions is always an odd multiple of $\frac{1}{4}.$ The grading of $z$ is then defined to be $|z|:=2r(\gamma_z)-\frac{1}{2}$. Grading is extended to products of generators $|yz|$ additively by $|yz|=|y|+|z|$. 

In the case of rainbow closures of a positive braid $\beta$, every Reeb chord that corresponds to a crossing of $\beta$ in the Ng resolution has degree zero while the remaining Reeb chords at the right of the diagram have degree one. 

	\textbf{Differential:} The differential is given by counts of certain holomorphic disks in the following way. We first decorate each quadrant of a crossing of $\Pi(\la)$ with two signs, a Reeb sign and an orientation sign. The Reeb sign is specified as pictured in Figure \ref{fig: ReebSigns} (left), where opposite quadrants have the same sign and adjacent quadrants have different signs. The orientation sign is given as in Figure \ref{fig: ReebSigns} (right), where the shaded regions are decorated with orientation sign $-$ and unshaded regions are decorated with orientation sign $+$.  
	
	The differential considers immersions $u$ from a punctured disk into $\R^2$ with boundary punctures on $\Pi(\la)$ up to reparametrization. We refer to any puncture appearing at a quadrant with a positive (resp. negative) Reeb sign as a positive (resp. negative) puncture. We restrict to immersions that have a single positive puncture and arbitrarily many negative punctures. For any such immersion $u$, denote by $w(u)$ the product of generators given by the negative boundary punctures/ 
	If the boundary of $u$ passes through any marked point $t_i$, then we obtain $w'(u)$ as the product of $w(u)$ by $t_i^{\pm 1}$. The power is assigned according to whether the orientation of $u$ at the relevant marked point agrees $(t_i)$ with the orientation of $\la$  or does not agree $(t_i^{-1})$ with the orientation of $\la$. To each disk $u$, we also assign the quantity $\sgn(u)\in \{\pm 1\}$ given by the product of the orientation signs appearing at boundary punctures of $u$. The differential at $z$ is then given by 
	$$\partial(z)=\sum \sgn(u)w'(u)$$ 
	where the sum is taken over all immersed disks $u$ with a single positive puncture at $z$. 
	We extend the differential to products $z_1z_2$ by the Leibniz rule $\partial (z_1z_2)=\partial(z_1)z_2+z_1\partial (z_2) $

		 	\begin{center}
		\begin{figure}[h!]{ \includegraphics[width=.6\textwidth]{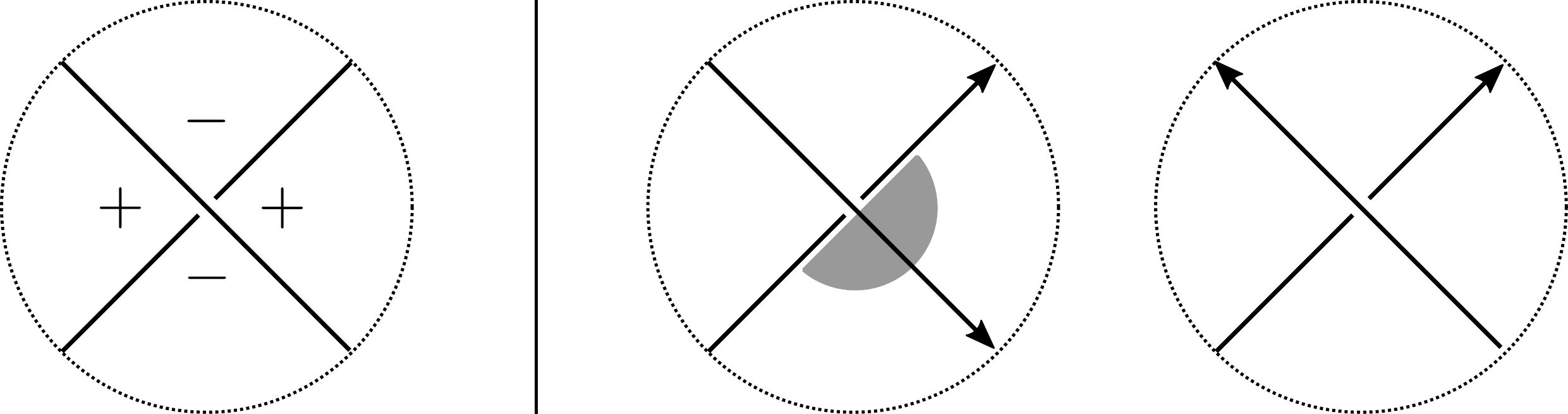}}\caption{Reeb signs (left) and orientation signs (right) at a crossing of $\Pi(\la)$. The quadrants shaded dark gray carry negative orientation signs, while the unshaded quadrants are positive.}
			\label{fig: ReebSigns}\end{figure}
	\end{center}


\begin{ex}
The DGA of $\la(A_2)$ is freely commutatively generated over $\Z[t, t^{-1}]$ by generators $a_1, a_2, z_1, z_2, z_3$, labeled in Figure \ref{fig: DGAEx}. The gradings are given by $|a_1|=|a_2|=1$ and $|z_i|=|t_1|=0$. The differential on generators $a_i$ is given by

$$\partial(a_i) = \begin{cases} z_1+z_3+z_1z_2z_3+t_1 & i=1 \\
1+t_1+z_2 + t_1z_3z_2+t_1z_1z_2+t_1z_1z_2z_3z_2 & i=2
\end{cases} $$
The differential on the remaining generators vanishes for degree reasons. Note that setting $t_1=-1$ implies $\partial(a_2)=-z_2\partial(a_1)$ \hfill $\Box$
\end{ex}

	\begin{center}
		\begin{figure}[h!]{ \includegraphics[width=.7\textwidth]{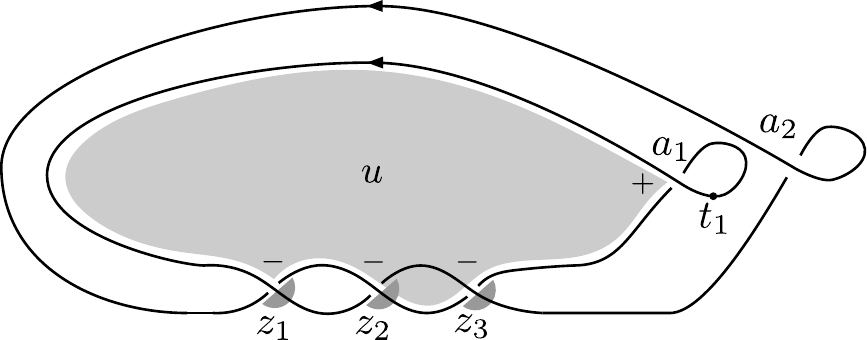}}\caption{The Lagrangian projection of the Legendrian trefoil decorated with Reeb signs and orientation signs. The light gray disk labeled $u$ has a positive puncture at $a_1$ and a single negative puncture at $z_1$. Thus, it corresponds to the term $z_1$ appearing in $\partial(a_1)$.}
			\label{fig: DGAEx}\end{figure}
	\end{center}

\subsubsection{Augmentations}\label{sub augs}

The Legendrian contact DGA can be difficult to extract information from, so it is often useful to consider augmentations of the DGA. Augmentations are DGA maps from $\mathcal{A}(\la)$ to some ground ring. Here we consider the ground ring of Laurent polynomials in $n-1$ variables with coefficients in $\Z$, understood as a DGA with trivial differential and concentrated in degree 0. In this subsection, we define augmentations and the related Legendrian isotopy invariant, the augmentation variety.

Augmentations of $\mathcal{A}(\la)$ are intimately tied to Lagrangian fillings of $\la$. This relationship can be understood through the functoriality of the DGA with respect to exact Lagrangian cobordisms. More precisely, Ekholm-Honda-K\'alm\'an show that an exact Lagrangian cobordism $L$ from $\la_-$ to $\la_+$ induces a DGA map $\Phi_L$ from $\mathcal{A}(\la_+; \Z_2)$ to  $\mathcal{A}(\la_-; \Z_2)$ \cite[Theorem 1.2]{EHK}. 
Their result was upgraded to make use of $\Z_2[H_1(L)]$ coefficients with an appropriate choice of marked points encoding $H_1(L)$ by Pan \cite[Proposition 2.6]{YuPan}. Pan's use of $H_1(L)$ coefficients is crucial for her ability to distinguish $C_n$ Lagrangian fillings of $\la(A_{n-1})$, as Ekholm-Honda-K\'alm\'an are only able to identify $(2^{n+1}-(-1)^{n+1})/3$ distinct Lagrangian fillings working over $\Z_2$ \cite[Theorem 1.6]{EHK}. The following result of Karlsson further improves Pan's coefficient ring to consider the augmentations over $\Z[H_1(L)]$.
	
	\begin{proposition}\label{prop: functoriality}\cite[Theorem 2.5]{Karlsson-cob} 
	An exact Lagrangian cobordism $L$ from $\la_-$ to $\la_+$ induces a DGA map $\Phi_L: \mathcal{A}(\la_+; \Z[H_1(\la_+)]\to \mathcal{A}(\la_-; \Z[H_1(L)])$.
	\end{proposition}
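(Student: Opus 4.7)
The plan is to follow the Ekholm--Honda--K\'alm\'an construction of the DGA cobordism map and to refine both the coefficient conventions and the sign conventions so that the target ring is $\Z[H_1(L)]$ rather than $\Z_2$ (as in EHK) or $\Z_2[H_1(L)]$ (as in Pan). First I would fix a cylindrical-at-infinity almost complex structure $J$ on the symplectization $(\R_t\times\R^3,d(e^t\alpha))$ compatible with $L$, and choose a finite collection of marked points on $L$ whose restrictions to the ends $\la_\pm$ agree with the marked points used to define $\mathcal{A}(\la_\pm)$ and whose associated formal variables generate $\Z[H_1(L)]$. The inclusion $\la_+\hookrightarrow L$ furnishes a natural ring homomorphism $\Z[H_1(\la_+)]\to \Z[H_1(L)]$, which is what allows the asymmetric coefficient rings in the statement to be compared.

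Next I would define $\Phi_L$ on each Reeb chord generator $a$ of $\la_+$ by
\[
\Phi_L(a)\;=\;\sum_{u}\sgn(u)\,w'(u),
\]
the sum running over rigid (expected dimension $0$), $J$-holomorphic disks $u$ in $\R_t\times\R^3$ with boundary on $L\cup((-\infty,-T]\times \la_-)\cup([T,\infty)\times \la_+)$, one positive puncture asymptotic to $a$, and arbitrarily many negative punctures asymptotic to Reeb chords of $\la_-$. Here $w'(u)$ is the product of the Reeb chord generators at negative punctures, decorated by the monomial in $\{t_i^{\pm 1}\}$ read off by tracking how $\partial u$ crosses marked points on $L$ (and on the cylindrical ends), with signs determined by whether the boundary traversal agrees or disagrees with the chosen orientations. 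The assignment $u\mapsto w'(u)$ is additive under gluing because marked-point crossings add, and homology classes of capped boundaries add, under concatenation. I would then extend $\Phi_L$ as a unital algebra map to all of $\mathcal{A}(\la_+)$.

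The chain-map identity $\partial_-\circ\Phi_L=\Phi_L\circ\partial_+$ would follow from the standard SFT compactness and gluing analysis: the boundary of a $1$-dimensional moduli space of disks with a single positive puncture at $a$ consists of two-level holomorphic buildings, either with a Reeb chord of $\la_+$ as a splitting asymptote (contributing to $\Phi_L\circ\partial_+$) or with a Reeb chord of $\la_-$ as a splitting asymptote (contributing to $\partial_-\circ\Phi_L$). One then checks that the induced decomposition of signs and of weights $w'(\,\cdot\,)$ matches the composition on either side. Additivity of $H_1(L)$-weights under gluing is a topological check once capping paths and marked-point conventions are fixed.

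The main obstacle is the signed refinement over the integers. To upgrade $\Z_2[H_1(L)]$ to $\Z[H_1(L)]$ one needs a coherent orientation scheme on all relevant moduli spaces of disks with boundary on $L$ (and its cylindrical ends), in the spirit of Ekholm--Etnyre--Sullivan's coherent orientations for Legendrian contact homology, together with a verification that the capping operators for $H_1(L)$ can be chosen compatibly with these orientations so that the gluing signs which appear in the boundary of $1$-parameter families are the ones dictated by the Leibniz rule and the algebra extension. This is the only genuinely new technical input beyond EHK and Pan; once the coherent orientations are in place, the chain-map check and the definition of $\Phi_L$ proceed as in the original construction.
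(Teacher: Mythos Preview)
The paper does not give its own proof of this proposition: it is quoted verbatim as \cite[Theorem 2.5]{Karlsson-cob}, with a pointer to \cite[Section 3.3]{CasalsNg} for the sign conventions. So there is nothing in the paper to compare your argument against beyond the citation itself.

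That said, your outline is an accurate high-level description of how the result is actually established in the literature. The definition of $\Phi_L$ via counts of rigid holomorphic disks with one positive puncture and boundary on $L$, the $H_1(L)$-weighting by marked-point intersections, the chain-map identity from two-level SFT breakings, and the identification of the integral lift as the nontrivial step requiring coherent orientations in the sense of Ekholm--Etnyre--Sullivan---this is exactly the architecture of the EHK/Pan/Karlsson sequence of papers. Your sketch would not stand alone as a proof (the coherent-orientation compatibility with $H_1(L)$-capping is the entire content of Karlsson's paper and cannot be summarized in a sentence), but as a roadmap pointing to where the work lies it is correct, and it matches the approach the paper is citing.
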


See also \cite[Section 3.3]{CasalsNg} for a discussion on Karlsson's choice of signs, as well as a geometric understanding of the induced map. As a result of Proposition \ref{prop: functoriality}, we can think of an augmentation of $\la$ as a map induced from $\mathcal{A}(\la; \Z[H_1(\la)])$ to the DGA of the empty set induced by an exact Lagrangian filling of $\la$. 

\begin{definition}
An augmentation $\epsilon_L$ induced by a Lagrangian filling $L$ of $\la$ is a DGA map 
	$$\epsilon_L: \mathcal{A}(\la, \Z[H_1(\la)]) \to \Z[H_1(L)] $$ where we think of $\Z[H_1(L)]$ as a DGA concentrated in degree zero with trivial differential.
	\end{definition}




	The functoriality of the DGA motivates the study of augmentations of $\mathcal{A}(\la)$ in order to better understand Lagrangian fillings of $\la$. The space of all augmentations of $\mathcal{A}(\la)$, denoted by $\Aug(\la)$, is an invariant of $\la$. In the case where $\la$ is the rainbow closure of a positive braid, $\Aug(\la)$ has the structure of an affine algebraic variety and is known as the augmentation variety. We tensor our coefficients ring with $\C$ in order to consider augmentations over a field.\footnote{To clarify, complexifying is solely for the purpose of simplifying the algebro-geometric discussion in this paragraph. For all other purposes, we will continue to use integer coefficients.} When the grading of $\mathcal{A}(\la)$ is concentrated in non-negative degrees, as is the case for rainbow closures of positive braids, then $\Aug(\la)\cong  \Spec H_0(\mathcal{A}(\la))$, see e.g. \cite[Corollary 2.9]{GSW}. Since $\Spec$ is contravariant, $\epsilon_{L}$ induces a map $\Spec(\C[s_1^{\pm 1}, \dots, s_{b_1(L)}^{\pm 1}])\to \Spec H_0(\mathcal{A}(\la; \C[H_1(\la)]))$, where we have identified the ground ring of Laurent polynomials with complex coefficients $\C[H_1(L)]$ with the group ring 
	$\C[s_1^{\pm 1}, \dots, s_{b_1(L)}^{\pm 1}]$. We interpret this map as the inclusion of a toric chart $(\C^\times)^{b_1(L)}$ into the augmentation variety 
	$$\Spec(\C[s_1^{\pm 1}, \dots, s_{b_1(L)}^{\pm 1}])\cong (\C^\times)^{b_1(L)}\hookrightarrow \Aug(\la).$$  

	The image of degree-zero generators under an augmentation give local coordinate functions on the corresponding toric chart.
	In order to describe these local coordinate functions, we discuss Pan's explicit computation of induced DGA maps in the context of Lagrangian fillings of $\la(A_{n-1})$ with a lift to $\Z[H_1(L)]$ following \cite[Section 4.2]{CasalsNg}. For a pinching cobordism, the induced map is given by a certain count of holomorphic disks, similar to the differential. The homology coefficients are determined by the intersection of these disks with relative homology classes in $H_1(L, \la_- \sqcup \la_+)$, which is identified with $H_1(L)$ via Poincar\'e duality.

	 Pan describes a set of generators for $H_1(L, \la_- \sqcup \la_+)$ for a sequence of pinching cobordisms. In this setting, a relative homology cycle $\gamma_{\sigma(i)}$ starts from the saddle point originally labeled $z_{\sigma(i)}$ and extends downwards to $\la_-$ where it meets the boundary in $s_{\sigma(i)}$ and $s_{\sigma(i)}^{-1}$. In order to consider signs, we orient this relative cycle so that the two halves of the cycle are labeled by $s_{\sigma(i)}$ and $-s_{\sigma(i)}^{-1}$, as in Figure \ref{fig: HomologyCoeffs}. In a slicing of the symplectization, $\gamma_{\sigma(i)}$ meets the Lagrangian projection of $\la(A_{k})$ in two points labeled $s_{\sigma(i)}$ and $-s_{\sigma(i)}^{-1}$, so that in practice, these generators reduce the computation of the coefficients to a combinatorial count of marked points. 
	 
	 \begin{center}
		\begin{figure}[h!]{ \includegraphics[width=.3\textwidth]{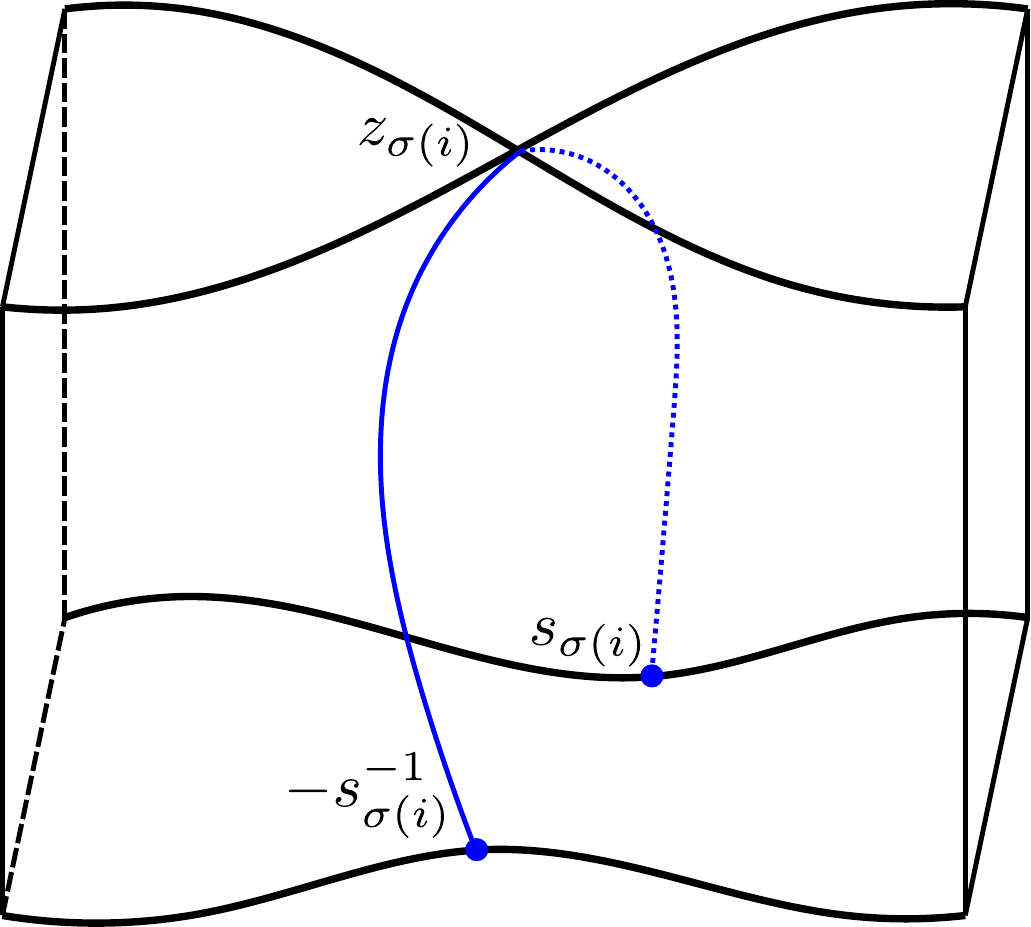}}\caption{A local model of a relative cycle encoding the homology of a pinching cobordism $L$. At the top of the figure, the length of the Reeb chord $z_{\sigma(i)}$ is 0, i.e. the two strands intersect at the point $z_{\sigma(i)}$. The bottom of the figure depicts a 0-resolution of the crossing where marked points labeled $-s_{\sigma(i)}^{-1}$ and $s_{\sigma(i)}$ encode $H_1(L)$.}
			\label{fig: HomologyCoeffs}\end{figure}
	\end{center}

	 Given a pinching cobordism $L_{\sigma(i)}$ at the Reeb chord $z_{\sigma(i)}$ as part of a Lagrangian filling $L_\sigma$, the induced map $\Phi_{i}$ on the generator $z_j$ is computed as a sum over all immersed disks with positive punctures at both $z_{\sigma(i)}$ and $z_j$. As before, we denote by $w'(u)$ the product of negative punctures of the immersed disk $u$ and intersections of $u$ with marked points counted with orientation.
	 
	 \begin{definition}
	  The DGA map $\Phi_i$ induced by a pinching cobordism at the Reeb chord $z_{\sigma(i)}$ is given by 	  $$\Phi_i(z_j)=z_j+\sum \sgn(u)w'(u)$$ 
	  	 The Reeb chord $z_{\sigma(i)}$ is sent to $s_{\sigma(i)}$ by $\Phi_{i}$.
	 \end{definition}


	 	 \begin{center}
		\begin{figure}[h!]{ \includegraphics[width=.7\textwidth]{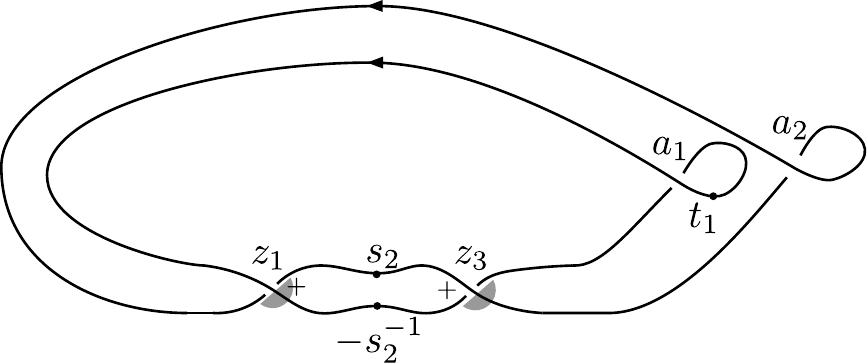}}\caption{A Legendrian Hopf link obtained from the knot pictured in Figure \ref{fig: DGAEx} by pinching at the Reeb chord labeled by $z_2$.  }
			\label{fig: PinchingEx}\end{figure}
	\end{center}
	 
\begin{ex}
Pinching at the Reeb chord labeled by $z_2$ of the Legendrian trefoil pictured in Figure \ref{fig: DGAEx} yields the Legendrian Hopf link, pictured in Figure \ref{fig: PinchingEx} with the addition of marked points $s_2$ and $-s_2^{-1}$. The induced map $\Phi_1$ on the DGA is given by $$z_1\mapsto z_1-s_2^{-1}, \qquad z_2\mapsto s_2,\qquad z_3\mapsto z_3-s_2^{-1}.$$ Performing another pinch at $z_1$ induces the map $\Phi_2$ given by $$z_1\mapsto s_1, \qquad s_2\mapsto s_2, \qquad z_3\mapsto z_3+s_1^{-1}s_2^{-2}.$$ 

The map on $z_3$ is determined by the disk with positive punctures at $z_1$ and $z_3$ passing through $s_2$ and $-s_2^{-1}$. \hfill $\Box$
\end{ex}

    Pan gives a purely combinatorial description of the map $\Phi_i$ induced by opening the crossing labeled $z_{\sigma(i)}$ \cite[Definition 3.2]{YuPan}. First, define the set
		
		\begin{align*}
			T^i_\sigma&=\{j\in \{1, \dots, n\}| \sigma^{-1}(j)>\sigma^{-1}(i) \text{ and if } i<k<j \text{ or } j<k<i, \text{ then } \sigma^{-1}(k)<\sigma^{-1}(i)\}.
		\end{align*}
		
		For $j\in T^{\sigma(i)}_\sigma$ and $1\leq i\leq n$, the DGA map is given by
		\begin{equation*}
			\Phi_i(z_j)=z_j+ s_{\sigma(i)}^{-1}\prod_{\substack{j<k<\sigma(i) \text{  or }\\ \sigma(i)<k<j}} s_k^{-2}
		\end{equation*} and for $j=\sigma(i),$ $\Phi_i(z_j)=s_j.$ Otherwise, we take $\Phi_i$ to be the identity. 
		\begin{lemma}
		The lift of Pan's combinatorial formula for $\Phi_i$ to $\Z[H_1(L)]$ 
		is given by
		\begin{equation*}\label{eq: Phi}
			\Phi_i(z_j)=z_j+ (-1)^{|j-\sigma(i)|+1}s_{\sigma(i)}^{-1}\prod_{\substack{j<k<\sigma(i) \text{  or }\\ \sigma(i)<k<j}} s_k^{-2}.
		\end{equation*} 
		
		\end{lemma}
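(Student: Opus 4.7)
The plan is to start from Pan's combinatorial formula over $\Z_2$ and carefully lift each contribution to $\Z[H_1(L)]$ using Karlsson's sign conventions. Pan's identification of $T^{\sigma(i)}_\sigma$ already shows that for each admissible $j$, there is a unique immersed disk $u$ contributing to $\Phi_i(z_j)$: it has positive punctures at $z_j$ and $z_{\sigma(i)}$, no negative punctures, and its boundary wraps around the $|j-\sigma(i)|-1$ intermediate crossings $z_k$ (each of which has already been pinched at an earlier step, by definition of $T^{\sigma(i)}_\sigma$). So the combinatorial content and the monomial $s_{\sigma(i)}^{-1}\prod_k s_k^{-2}$ are fixed by Pan's argument; all that remains is to determine the overall sign.

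I would first compute $\sgn(u)$, the product of orientation signs at the boundary punctures of $u$. Since $u$ has positive punctures at $z_j$ and $z_{\sigma(i)}$ and no negative punctures, $\sgn(u)$ is simply the product of two orientation signs read off from Figure \ref{fig: ReebSigns}. The punctures of $u$ sit in diagonally opposite unshaded quadrants at each crossing (because the disk wraps from one strand of the braid $\sigma_1^n$ across to the other), so both contribute positive orientation signs, giving $\sgn(u)=+1$. (If $j$ and $\sigma(i)$ are on the same side of the braid, the analysis is symmetric.)

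Next I would compute the marker contributions. Orient both strands of $\la(A_{n-1})$ consistently, as in the braid presentation. At each intermediate pinched crossing $z_k$, the boundary of $u$ traverses the two resulting arcs: one arc carrying the marker $s_k$ and the other carrying the marker $-s_k^{-1}$. With the chosen orientations, one of these arcs is crossed in the direction of $\la$ and the other against, and a direct check shows that the joint contribution is $-s_k^{-2}$ at each such crossing. At $z_{\sigma(i)}$ itself, the puncture of $u$ sits between the two arcs produced by the current pinch, and the boundary of $u$ runs along the arc carrying $-s_{\sigma(i)}^{-1}$ (against the orientation of $\la$) so that this puncture contributes the single additional factor $-s_{\sigma(i)}^{-1}$, not $s_{\sigma(i)}^{-1}$.

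Putting the three pieces together, the total coefficient of $z_j + (\cdots)$ is
\[
\sgn(u)\,\bigl(-s_{\sigma(i)}^{-1}\bigr)\prod_{k}\bigl(-s_k^{-2}\bigr)
=(-1)^{1+(|j-\sigma(i)|-1)}\,s_{\sigma(i)}^{-1}\prod_{k}s_k^{-2}
=(-1)^{|j-\sigma(i)|+1}\,s_{\sigma(i)}^{-1}\prod_{k}s_k^{-2},
\]
which is exactly the claimed formula; reducing modulo $2$ recovers Pan's expression as a consistency check. The main obstacle in this plan is the bookkeeping of orientations and markers at each arc, and ensuring the cases $j<\sigma(i)$ and $j>\sigma(i)$ (as well as the two choices of braid strand on which each puncture lies) all yield the same sign; once a single orientation convention is fixed and verified in one case, the other cases follow by symmetry of the positive braid $\sigma_1^n$.
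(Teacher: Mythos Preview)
Your approach is the same as the paper's: identify the unique bigon $u$, then read off the sign as a product of orientation signs at the two positive punctures together with the signed marked-point contributions along $\partial u$. The paper's proof is a three-line version of exactly this count.

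However, your sign bookkeeping does not close. In your displayed equation you write
\[
(-1)^{1+(|j-\sigma(i)|-1)}=(-1)^{|j-\sigma(i)|+1},
\]
but $1+(|j-\sigma(i)|-1)=|j-\sigma(i)|$, so your own premises actually produce $(-1)^{|j-\sigma(i)|}$, one sign off from the target. The discrepancy traces back to your claim that $\sgn(u)=+1$ because both punctures land in ``diagonally opposite unshaded quadrants.'' That assertion is exactly where the paper disagrees with you: its proof records an additional $-1$ coming from the orientation sign at the \emph{leftmost} of the two positive punctures, so that $\sgn(u)=-1$ rather than $+1$. With the conventions of Figure~\ref{fig: ReebSigns}, the two positive-Reeb quadrants at a braid crossing carry \emph{opposite} orientation signs, and the bigon occupies the right quadrant at one crossing and the left quadrant at the other; hence exactly one of the two punctures is shaded. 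If you correct $\sgn(u)$ to $-1$ and keep your (correct) count of $|j-\sigma(i)|-1$ intermediate pairs together with the single $-s_{\sigma(i)}^{-1}$ at the pinch, the product becomes $(-1)\cdot(-1)\cdot(-1)^{|j-\sigma(i)|-1}=(-1)^{|j-\sigma(i)|+1}$ as claimed. So the plan is right, but the specific justification of $\sgn(u)=+1$ is the gap; replace it with a direct check of which quadrant is shaded at each end of the bigon.
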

		\begin{proof}
		    To upgrade Pan's formula to $\Z[H_1(L)]$ coefficients, we  note that the number of pairs of marked points that appear between $z_j$ and $z_{\sigma(i)}$ is precisely $|j-\sigma(i)|$. Since each pair of marked points contributes a $-1$ factor to $\sgn(u)$ and the disk $u$ with positive punctures at $z_j$ and $z_{\sigma(i)}$ picks up an additional $-1$ factor from the orientation sign of the leftmost positive puncture, we arrive at the formula given above.
		\end{proof}

	To compute an augmentation of $\mathcal{A}(\la; \Z[T])$, we also need to describe the map induced by the minimum cobordism. This minimum cobordism is given by filling a standard Legendrian unknot $\la_U$ with an exact Lagrangian disk. It induces a map $\Phi_{min}$ sending the Reeb-chord generator of $\mathcal{A}(\la_U; \Z[T])$ to 0. The map on the marked point generators can be deduced from the fact that $\Phi_{min}$ is a DGA map and therefore we must have $\Phi_{min}\circ\partial=0$. In the context of a filling $L_\sigma$ of $\la(A_{n-1})$, this tells us that $s_1\dots s_n+t_1=0$. For knots, we also obtain $-(s_1\dots s_n)^{-1}+1=0$, implying that $t_1$ is mapped to $-1$. For links, we have $(s_1\dots s_n)^{-1}+t_2=0$, implying only that $t_1t_2=1$. Pan avoids this ambiguity by computing augmentations of $\mathcal{A}(\la(A_{2n-1};H_1(\la(A_{2n-1}))$ induced by  fillings $L_\sigma$ of $\la(A_{n})$ where $\sigma(1)=n+1$. This is equivalent to setting $t_1=-1$, from which we obtain $t_2=-1$. Therefore, the DGA map induced by the minimum cobordism is given by the following formula.
$$\Phi_{min}(s_n)=s_1^{-1}\dots s_{n-1}^{-1}$$ 
For $s_i\neq s_n$, $\Phi_{min}$ is the identity map. For marked points $t_i,$ we have $\Phi_{min}(t_i)=-1$.

	
Together, the maps $\Phi_i$ and $\Phi_{min}$ give us the ingredients to define the augmentation induced by a pinching sequence filling of $\la(A_{n-1}).$

\begin{definition}
  The augmentation $\epsilon_{L_\sigma}$ induced by the Lagrangian filling $L_\sigma$ of $\la(A_{n-1})$ is given by the DGA map
  $$\epsilon_{L_\sigma}:=\Phi_{min}\circ\Phi_n\circ \dots \circ \Phi_1$$ 
\end{definition}


			
			
			
To simplify our computations involving the DGA and the K\'alm\'an loop, we will always set $t_1=-1$ and $t_2=-1$ for the remainder of this manuscript. By our definition of $\epsilon_{L_\sigma}$, this does not affect the augmentation induced by $L_\sigma$.   
	
\subsubsection{Braid matrices}\label{sub: braid matrices}

	For $\la(A_{n-1})$, the polynomials defining the augmentation variety have a combinatorial description as a specific entry in a product of matrices. These matrices originally appeared in \cite{Kalman-braid} as a means for encoding the immersed disks contributing to the differential. More recently, they were used in \cite{CGGS1} to give a holomorphic symplectic structure on the augmentation variety. We adopt the conventions of \cite{CGGS1} in defining the braid matrix.
	
	\begin{definition}
	    The braid matrix $B(z_i)$ is given by
	    $B(z_i):=\begin{pmatrix}0 & 1 \\ 1 & z_i\end{pmatrix}.$ 
	\end{definition}

Intuitively, one can think of the matrix $B(z_i)$ as encoding whether or not an immersed disk has a negative puncture at the crossing labeled by $z_i$. A product of braid matrices can be used to compute the differential of $\mathcal{A}(\la(A_{n-1}; \Z)$ as follows. 
First, label the crossings of $\la(A_{n-1})$ by $a_1, a_2, z_1, \dots z_n$, as in Figure \ref{fig: DGAEx}. From \cite[Section 3.1]{Kalman-braid}, we have that $\partial(a_1)=-1+\left[\prod_{i=1}^n B(z_i)\right]_{(2,2)}$ 
where the subscript denotes the $(2, 2)$ entry of the product and the $-1$ appears due to our choice of conventions. See also \cite[Proposition 5.2]{CasalsNg} for a similar computation. An analogous computation to the case of $\la(A_2)$ implies that the differential of $a_2$ is given by $\partial(a_2)=-\Delta_{2, n+1}(\partial(a_1))$. 

The computation of the differential via braid matrices also allows us to express the augmentation variety $\Aug(\la(A_{n-1}))$ in a similar manner. As augmentations are DGA maps, they satisfy $\epsilon\circ \partial=\partial \circ \epsilon$. Since $\epsilon$ respects the grading, it must vanish on generators of nonzero degree, implying that for such generators $a$, we have $\partial\circ \epsilon(a) = 0$. Therefore, any augmentation $\epsilon$ of $\mathcal{A}(\la(A_{n-1}))$ satisfies $\epsilon\circ \partial(a_1)=\partial(a_2)=0$. Since $\partial(a_2)$ is a multiple of $\partial(a_1)$, the vanishing of $\partial(a_1)$ is both necessary and sufficient to satisfy the vanishing condition for $\epsilon\circ \partial$, so that the augmentation variety is cut out by the vanishing of the equation $\partial(a_1)=0$.  

\begin{lemma}
	The augmentation variety $\Aug(\la(A_{n-1}))$ is the zero set of the polynomial 
		$$X_n:=-1+\left[\prod_{i=1}^n B(z_i)\right]_{(2,2)}$$
\end{lemma}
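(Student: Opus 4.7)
The plan is to read the defining equations of $\Aug(\la(A_{n-1}))$ off directly from the condition that an augmentation $\epsilon\colon \mathcal{A}(\la(A_{n-1}); \Z[H_1(\la)]) \to \Z[H_1(L)]$ be a DGA map into a target concentrated in degree zero with trivial differential. First I would observe that the degree-one generators $a_1, a_2$ are automatically sent to $0$, so the only content of the chain condition on positive-degree generators is the pair of polynomial equations $\epsilon(\partial a_1) = 0$ and $\epsilon(\partial a_2) = 0$ in the degree-zero variables $z_1, \dots, z_n$. The $z_i$ have vanishing differential and impose no further constraint, while the marked-point generators are already fixed by the convention $t_1 = t_2 = -1$ adopted earlier.

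Substituting the Kálmán braid-matrix formula for $\partial a_1$ recalled immediately above the lemma, the equation $\epsilon(\partial a_1) = 0$ becomes literally $X_n = 0$ in the images $\epsilon(z_i)$, which I continue to denote by $z_i$. This yields the inclusion $\Aug(\la(A_{n-1})) \subseteq V(X_n)$ at once and reduces the problem to showing that $\epsilon(\partial a_2) = 0$ imposes no extra restriction on the $z_i$. For this, the key step is to justify the proportionality $\partial a_2 = -\Delta_{2,n+1}\,\partial a_1$ asserted in the discussion preceding the lemma. I would verify it by a local disk count in the Ng resolution of $\la(A_{n-1})$: any immersed disk with positive puncture at $a_2$ is forced to first cross the strip separating $a_2$ from $a_1$, and the count of disks continuing past that crossing is precisely the count for $a_1$; at the level of braid matrices this expresses $\partial a_2$ as $-\Delta_{2,n+1}$ times the $(2,2)$-entry computation for $\partial a_1$. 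Alternatively one could induct on $n$, using the explicit $\la(A_2)$ computation worked out in the DGA example earlier in the text as a base case.

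Once the proportionality is granted, any point of $V(X_n)$ extends tautologically to an augmentation by sending $a_1, a_2 \mapsto 0$, fixing the marked points as above, and sending each $z_i$ to its prescribed value, so the chain condition is automatically satisfied and $V(X_n) = \Aug(\la(A_{n-1}))$. The main obstacle is therefore the proportionality identity for $\partial a_2$: the reduction from the DGA definition to $X_n = 0$ via the grading argument and the Kálmán braid-matrix formula is essentially automatic, and the identification of the $(2,2)$ entry of the matrix product with the disk count for $\partial a_1$ has already been imported from the work of Kálmán and Casals--Ng cited above. Everything else is a direct unpacking of definitions, and indeed the paragraph immediately preceding the lemma already sketches precisely this route.
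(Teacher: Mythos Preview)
Your proposal is correct and follows essentially the same route as the paper: the paragraph immediately preceding the lemma is the paper's entire argument, using the grading condition to force $\epsilon(\partial a_1)=\epsilon(\partial a_2)=0$, invoking K\'alm\'an's braid-matrix formula for $\partial a_1$, and then citing the proportionality $\partial a_2=-\Delta_{2,n+1}\,\partial a_1$ (justified only by ``an analogous computation to the case of $\la(A_2)$'') to conclude that the second equation is redundant. Your sketch is slightly more explicit about how one might verify that proportionality (disk count or induction), but the overall structure is identical, as you yourself note in the final sentence.
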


	In addition to computing the augmentation variety, braid matrices also define regular functions $\Delta_{i,j}$ on $\Aug(\la(A_{n-1}))$ that will play an important role in Section \ref{section:algebraic}.
	
	\begin{definition}
	    The regular function $\Delta_{i, j}\in \Z[X_n]$ is given by 
	    $\Delta_{i,j}:=\left[\prod_{k=i}^{j-2}B(z_k)\right]_{(2,2)}.$
	\end{definition}

	We specify the value of $\Delta_{i, i+1}$ to be 1. We collect some useful identities relation the $\Delta_{i, j}$ functions to the theory of continuants below. 

\begin{ex}
Consider the Legendrian trefoil, $\la(A_2)$. The augmentation variety $\Aug(\la(A_2))$ is the zero set of the polynomial $X_3=-1+z_1+z_3+z_1z_2z_3$. The regular functions $\Delta_{i, j}$ are of the form $\Delta_{i, i+2}=z_i$ or $\Delta_{i, i+3}=1+z_iz_{i+1}$, for $1\leq i\leq 3$. 
\hfill $\Box$
\end{ex}

\subsubsection{Continuants}	

Continuants are a family of polynomials $K_n(x_1, \dots x_n)$ studied by Euler in his work on continued fractions \cite{Euler}. Continuants are defined by the following recursive formula:

$$K_n(x_1, \dots x_n)=x_1K_{n-1}(x_2, \dots x_{n})+ K_{n-2}(x_3, \dots x_{n})$$ 

$K_0()=1, K_1(x_1)=x_1.$

As mentioned above, the regular functions $\Delta_{i, j}$ are related to continuants.

\begin{lemma}
Let $n=j-2-i$ and $x_k=z_{i+k-1}$. Then $$K_n(x_1, \dots x_n)=\Delta_{i,j}.$$

\end{lemma}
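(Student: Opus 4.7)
The plan is to prove the identity by induction on the number of factors in the matrix product defining $\Delta_{i,j}$, showing that the $(2,2)$-entry satisfies the same three-term recurrence and the same initial conditions as the continuant. The base cases are essentially definitional: the empty-product convention gives $\Delta_{i,i+1} = 1 = K_0()$, and a single factor gives $\Delta_{i,i+2} = [B(z_i)]_{(2,2)} = z_i = K_1(z_i)$.

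For the inductive step, set $M_{i,j} := \prod_{k=i}^{j-2} B(z_k)$, so that $\Delta_{i,j} = [M_{i,j}]_{(2,2)}$. Right-multiplying by the next braid matrix and reading off the bottom row gives
$$M_{i,j+1} = M_{i,j}\,B(z_{j-1}) \qquad\Longrightarrow\qquad [M_{i,j+1}]_{(2,2)} = [M_{i,j}]_{(2,1)} + z_{j-1}\,[M_{i,j}]_{(2,2)}.$$
Applying the same identity one index earlier identifies $[M_{i,j}]_{(2,1)} = [M_{i,j-1}]_{(2,2)} = \Delta_{i,j-1}$, producing the three-term recurrence
$$\Delta_{i,j+1} = z_{j-1}\,\Delta_{i,j} + \Delta_{i,j-1}.$$
This has exactly the shape of the continuant recurrence, so with the matching initial data the inductive hypothesis closes the argument.

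The one place where care is needed is that the recurrence produced by right-multiplication grows the continuant on its last argument, whereas the definition of $K_n$ in the excerpt grows it on the first argument. I would resolve this by invoking the classical palindromic symmetry $K_n(x_1, \ldots, x_n) = K_n(x_n, \ldots, x_1)$; alternatively, because $B(z)^T = B(z)$, transposing the entire product reverses its order and yields the symmetry directly on the matrix side. Either route lets the induction match $\Delta_{i,j}$ with the appropriate continuant in $z_i, \ldots, z_{j-2}$, completing the proof. The argument is otherwise purely a short inductive bookkeeping, and I would not expect any deeper obstacle beyond this left-versus-right orientation check.
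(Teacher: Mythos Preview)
Your proof is correct and follows essentially the same inductive matrix argument as the paper. The only difference is orientation: the paper left-multiplies by $B(x_1)$ and proves the full $2\times 2$ identity
\[
B(x_1)\cdots B(x_n)=\begin{pmatrix}K_{n-2}(x_2,\dots,x_{n-1})&K_{n-1}(x_2,\dots,x_n)\\K_{n-1}(x_1,\dots,x_{n-1})&K_n(x_1,\dots,x_n)\end{pmatrix},
\]
which matches the stated continuant recursion directly and so never needs the palindrome symmetry. Your right-multiplication yields the recursion $\Delta_{i,j+1}=z_{j-1}\Delta_{i,j}+\Delta_{i,j-1}$ growing on the last variable, and your fix via $K_n(x_1,\dots,x_n)=K_n(x_n,\dots,x_1)$ (or equivalently $B(z)^{\sf T}=B(z)$) is valid; had you instead written $M_{i,j}=B(z_i)M_{i+1,j}$ you would have obtained $\Delta_{i,j}=z_i\Delta_{i+1,j}+\Delta_{i+2,j}$ directly, which is exactly the paper's Equation~\eqref{eq:recurse} and avoids the extra step.
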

\begin{proof}

The following is a classical property of continuants (see e.g. \cite[Section 1]{Frame49}) that allows us to understand the defining recursion relation in terms of braid matrices. 
 $$\begin{pmatrix}K_{n-2}(x_2, \dots, x_{n-1})& K_{n-1}(x_2, \dots, x_n)\\
K_{n-1}(x_1, \dots x_{n-1}) & K_n(x_1, \dots, x_n) \end{pmatrix}=B(x_1)\dots B(x_n).$$ 
This follows inductively from applying the recursion relation in computing the matrix product
$$\begin{pmatrix} 0 & 1 \\ 1 & x_1\end{pmatrix}\begin{pmatrix}K_{n-3}(x_3, \dots, x_{n-1})& K_{n-2}(x_3, \dots, x_n)\\
K_{n-2}(x_2, \dots x_{n-1}) & K_{n-1}(x_2, \dots, x_n) \end{pmatrix}=\begin{pmatrix}K_{n-2}(x_2, \dots, x_{n-1})& K_{n-1}(x_2, \dots, x_n)\\
K_{n-1}(x_1, \dots x_{n-1}) & K_n(x_1, \dots, x_n) \end{pmatrix}.$$

Therefore, $$K_{n}(x_1, \dots, x_{n})=\left[\prod_{k=1}^{n}B(x_k)\right]_{(2, 2)}$$
Replacing $x_k$ with $z_{i+k-1}$ yields the desired identification.
\end{proof}

As a consequence, we obtain the continuant recursion relation in the context of the $\Delta_{i, j}$ functions.
 \begin{equation}\label{eq:recurse}
\Delta_{i, j}=z_i\Delta_{i+1,j}+\Delta_{i+2, j}
\end{equation}

Continuants satisfy several identities, the most general of which is Euler's identity for continuants. We present this identity in the context of the $\Delta_{i, j}$ functions: 
	$$\Delta_{1, \mu+\nu+2}\Delta_{\mu+1, \mu+\kappa+2}-\Delta_{1, \mu+\kappa+2}\Delta_{\mu+1, \mu+\nu+2}=(-1)^{\nu+1}\Delta_{1, \mu+1}\Delta_{\mu+\kappa+2, \mu+\nu+2}$$ for
	$\mu\geq 1, \kappa\geq 0, \nu\geq \kappa+1$ \cite{Ustinov06}.
We require a special case of this identity for our algebraic proof of Theorem \ref{thm: orbit size}. Namely, when $\mu=1, \kappa=k-3\geq 0, \nu=n-1\geq k-2$, we obtain
\begin{equation}\label{eq Euler}
\Delta_{1, n+2}\Delta_{2, k}-\Delta_{1, k}\Delta_{2, n+2}=(-1)^{n}\Delta_{1, 2}\Delta_{k+2, n+2}.
\end{equation}


	\subsubsection{The K\'alm\'an loop}\label{sub: Kalman prelims}
	
	In \cite{Kalman}, K\'alm\'an defined a geometric operation on Legendrian torus links that induces an action on their exact Lagrangian fillings. In the case of $\la(A_{n-1})$, this operation consists of a Legendrian isotopy that is visualized by dragging the leftmost crossing clockwise around the link until it becomes the rightmost crossing. The graph of this isotopy is an exact Lagrangian cylinder in the symplectization of $(\R^3, \xi_{st})$. 
	Concatenating this cylinder with a Lagrangian filling $L$ of $\la(A_{n-1})$ yields another filling, generally not Hamiltonian isotopic to $L$.	
	As computed in \cite[Proposition 9.1]{Kalman}, this induces a map on the DGA $\mathcal{A}(\la(A_{n-1}); \Z_2)$, which in turn induces an automorphism $\vartheta$ on the augmentation variety $\Aug(\la(A_{n-1}))$. Following \cite[Section 3]{CasalsNg}, we can compute this induced action with integer coefficients. 
	The additional information of this integral lift consists solely of a choice of signs for terms in the image of $\vartheta$, as can be seen in K\'alm\'an's example computation over $\Z[t, t^{-1}]$ in the case of the $\la(A_2)$ \cite[Section 5]{Kalman}.\footnote{Note that K\'alm\'an uses a different choice of sign conventions than Casals and Ng. By \cite[Proposition 3.14]{CasalsNg}, these different sign conventions yield equivalent induced augmentations.} The map $\vartheta$ on generators $z_i$ is then given by 
	$$\vartheta(z_i)= \begin{cases} -\Delta_{2, n+2} & i=1\\ z_{i-1} & 2 \leq i \leq n
	\end{cases}$$


 In the $\Delta_{i, j}$ functions, this is expressed as $\vartheta(\Delta_{i, j})=\Delta_{i-1, j-1}$ for $i>1$, and $$\vartheta(\Delta_{1, j})=-\left[B(\Delta_{2, n+2})\prod_{i=1}^{j-3}B(z_i)\right]_{(2,2)}.$$

	\subsection{Combinatorics of triangulations}\label{Sub: combinatorics}
	
	The two geometric constructions of Lagrangian fillings discussed above, as well as their algebraic invariants involve combinatorial characterizations that are crucial for our later description of the action of the K\'alm\'an loop. This subsection starts with a description of the specific combinatorial bijection between 312-avoiding permutations $\sigma$ and triangulations $\mathcal{T}_\sigma$ that we use to relate the Lagrangian fillings $L_\sigma$ and $L_{\mathcal{T}_\sigma}$. We then describe the orbital structure of triangulations under the action of rotation, and conclude with a combinatorial description of the set of triangulations in terms of the flip graph.

		\subsubsection{The clip sequence bijection}
	As a first step towards relating pinching sequence fillings and weave fillings, we describe a combinatorial bijection between triangulations of the $n+2$-gon and 312-avoiding permutations in $S_n$. As a corollary to Theorem \ref{thm: isotopy1}, we will show that this combinatorial bijection corresponds to a Hamiltonian isotopy of the Lagrangian fillings defined by this input data.
 
	For a 312-avoiding permutation $\sigma$, we denote the corresponding triangulation by $\mathcal{T}_\sigma$ and a diagonal between vertex $i$ and vertex $j$ of $\mathcal{T}_\sigma$ by $D_{i,j}$. Adopting the terminology of \cite{RegevAlon2013Abbt}, we refer to a triangle  in $\mathcal{T}_\sigma$ with sides $D_{i, i+2}, D_{i, i+1}, D_{i+1, i+2}$, two of which lie on the $(n+2)$-gon, as an ear of the triangulation. Note that any triangulation must have at least two ears and that the middle vertex of an ear necessarily has no diagonal incident to it. 
	
	Given a triangulation of the $(n+2)$-gon, the clip sequence bijection is defined as follows. First, label the vertices in clockwise order from 1 to $n+2$. Remove the middle vertex of the ear with the smallest label, record the label and delete all edges of the $(n+2)$-gon incident to the vertex. Repeat this process with the ear whose middle vertex is now the smallest of the remaining vertices in the resulting triangulation of the $n+1$-gon. Continue this process until no triangles remain. The main result of \cite{RegevAlon2013Abbt} is that this map is indeed a bijection between the set of 312-avoiding permutations in $S_n$ and triangulations of the $(n+2)$-gon. 
	
	The clip sequence  bijection allows us to explicitly define a weave filling with the input of a 312-avoiding permutation $\sigma$.
	
	\begin{definition}
		The Lagrangian filling $L_{\mathcal{T}_\sigma}$ is the weave filling defined by the 2-graph dual to the triangulation $\mathcal{T}_\sigma$.
	\end{definition}

\vspace{-.2cm}
	
	See Figure \ref{fig: clip} for a computation of the 312-avoiding permutation corresponding to the triangulation dual to the 2-graph example given above. 
	
			\begin{center}
		\begin{figure}[h!]{ \includegraphics[width=.6\textwidth]{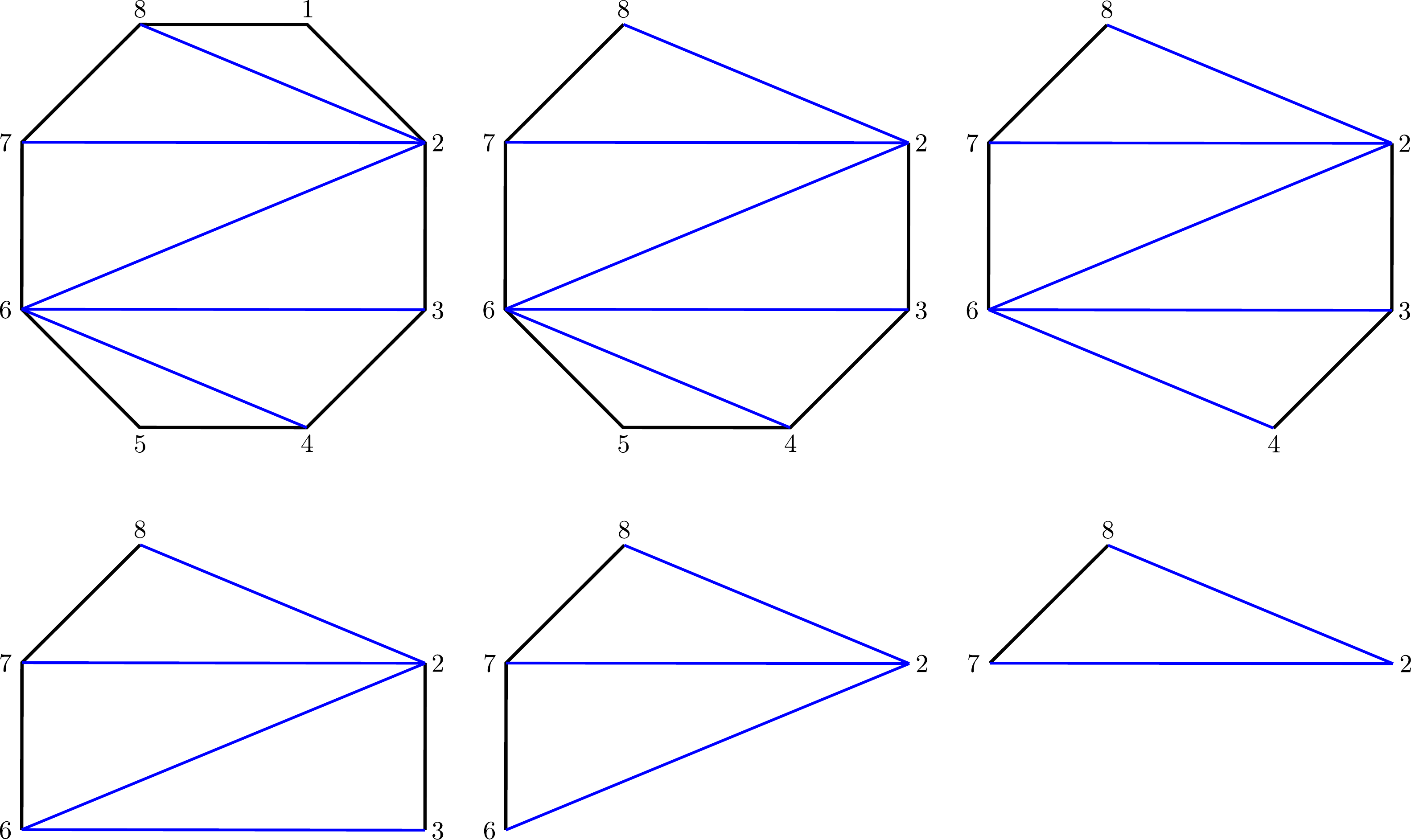}}\caption{An example computation of the clip sequence bijection. Starting with our initial triangulation, we remove and record the smallest numbered vertex with no incident diagonals. From the sequence pictured, we get the 312-avoiding permutation $\sigma=1\,5\,4\,3\,6\,2$. 
		The diagonal $D_{2, 8}$ yields the function $\Delta_{1, 3}$ after adding 1 to both indices and reducing mod 8.}
			\label{fig: clip}\end{figure}
	\end{center}

	
	

\subsubsection{Orbits of triangulations under rotation}\label{sub: Orbital structure}

In Section \ref{section:algebraic}, we will show that in $\Aug(\la)$, the global functions $\Delta_{i,j}$ transform as $\vartheta(\Delta_{i,j})=\Delta_{i-1, j-1}$ for indices taken modulo $n+2$. As suggested by our weave fillings, we can also consider the $\Z_{n+2}$ action of counterclockwise rotation on the set of diagonals 
	$\{D_{i, j}\}_\mathcal{T}$ of a triangulation $\mathcal{T}$ of the $(n+2)$-gon. 
	Restricting to the toric chart induced by an augmentation $\sigma$, there is a corresponding triangulation $\mathcal{T}_\sigma$ for which it can be shown that the set map $D_{i-1, j-1}\mapsto \Delta_{i, j}$ between diagonals $\{D_{i-1, j-1}\}_{\mathcal{T}_\sigma}$ of the triangulation $\mathcal{T}_\sigma$ and regular functions $\{\Delta_{i, j}\}$ is a $\Z_{n+2}$-equivariant map.\footnote{As we explain later, this indexing shift is necessary so that the combinatorial bijection between 312-avoiding permutations and triangulations yields Lagrangian fillings that induce the same toric chart inside the augmentation variety.}	The orbital structure of the action of rotation on triangulations will therefore appear as a crucial ingredient in the proofs of Theorems \ref{thm: orbit size}. 
	
	The number of orbits of the set of triangulations of the $(n+2)$-gon under the action of counterclockwise rotation is given by the formula
	
	$$\frac{C_{n}}{n+2}+\frac{C_{n/2}}{2}+\frac{2C_{(n-1)/3}}{3} $$

	where, as previously, the terms with $C_{n/2}$ and $C_{n/3}$ only appear if the indices are integers. These terms correspond, respectively, to triangulations with no rotational symmetry, rotational symmetry by $\pi$, and rotational symmetry by $\frac{2\pi}{3}$. No other rotational symmetry of a triangulation is possible. The orbit sizes are $n+2$, $\frac{n+2}{2}$ and $\frac{n+2}{3}$, where again the corresponding orbit size only occurs if the relevant fraction is an integer.



\subsubsection{The flip graph}
	The combinatorics of triangulations of the $(n+2)$-gon have previously appeared in constructions of $A$-type fillings. As explained in \cite{TreumannZaslow, CasalsZaslow}, Legendrian mutation, an operation for generating new fillings, corresponds to exchanging diagonals of a quadrilateral in the original triangulation to form a new triangulation. Such an exchange of diagonals is depicted in Figure \ref{fig: flip}, and we refer to it as an edge flip. See Subsection \ref{sub:clusters} for more on the cluster-algebraic interpretation of this operation in terms of cluster mutation. The  flip graph or associahedron is then defined to have vertices given by triangulations and an edge between two vertices if the triangulations are related by a single edge flip. The diameter of the flip graph was first investigated via geometric methods by Thurston Sleator and Tarjan in \cite{SleatorTarjanThurston} and later 
	combinatorially by Pournin in \cite{Pournin14}. In general, the combinatorics of the flip graph are an area of active interest, and there is no known algorithm for determining geodesics. 
	In Subsections \ref{sub:rotation} and \ref{sub:mutation}, we present a description of the K\'alm\'an loop as a sequence of edge flips in the flip graph and describe the result of a single edge flip on a 312-avoiding permutation, thus providing a characterization of the K\'alm\'an loop action as a geodesic path in the flip graph. 

		\section{Isotopies of exact Lagrangian Cobordisms}\label{section:geometric}
	

	
	In this section we prove that a pinching sequence filling $L_\sigma$ is  Hamiltonian isotopic to the weave filling  $L_{\mathcal{T}_\sigma}$ for a given 312-avoiding permutation $\sigma$. 
	We first relate the elementary cobordisms used to construct these fillings. 
	
		\begin{proposition}
		\label{lemma:local models}
	The pinching cobordism and $D_4^-$ cobordism are Hamiltonian isotopic relative to their boundaries. 
		\end{proposition}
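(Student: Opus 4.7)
The plan is to show that both cobordisms realize a single 1-handle saddle attachment along the same Legendrian attaching data, up to Legendrian isotopies of their boundary slices. Because the trace of a Legendrian isotopy in $(\R^3, \xi_{\st})$ is a Lagrangian cylinder in the symplectization, and such cylinders can be absorbed into a Hamiltonian isotopy relative to the ends of the cobordism, establishing this identification of attaching data will suffice.

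For the comparison I would work in a common projection, namely the Lagrangian projection, translating the front description of the $D_4^-$ cobordism via the Ng resolution. In this setting the pinching cobordism is already presented canonically: a preliminary Legendrian isotopy shrinks the contractible Reeb chord to zero length, and the cobordism is completed by a 0-resolution at the resulting vanishing chord. This 0-resolution will serve as my reference local model for a single 1-handle attachment.

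Next I would analyze the movie describing the $D_4^-$ cobordism slice by slice, identifying which slices are cylindrical and which contribute non-trivially to the cobordism. The initial Reidemeister I move, the isotopy that shrinks the Reeb chord trapped between the two crossings, and the two terminal Reidemeister I moves are all Legendrian isotopies of the slice; under the Ng resolution each becomes a Legendrian isotopy of the Lagrangian projection, contributing only a trivial cylindrical piece to the Lagrangian cobordism. The unique non-cylindrical slice is the 1-handle that removes the shrunk Reeb chord; passing it through the Ng resolution expresses it exactly as a 0-resolution of a crossing whose Reeb chord has length zero, matching the pinching model pointwise in a neighborhood of the vanishing chord.

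The main obstacle, and the step that requires the most care, is verifying that the Legendrian isotopies preceding and following the 1-handle can be chosen compactly supported in a neighborhood of the vanishing Reeb chord, so that gluing them to the reference 1-handle yields a cobordism that agrees with the $D_4^-$ cobordism outside the local model while remaining Hamiltonian isotopic to the pinching cobordism relative to boundary. I would carry this out by writing the two local models in coordinates adapted to a Weinstein neighborhood of the vanishing Reeb chord and exhibiting an explicit contact Hamiltonian whose time-one flow interpolates between the two movies of slices while fixing the ends. Once this compactly supported interpolation is in place, the standard uniqueness of 1-handle attachments along Legendrian-isotopic attaching data delivers the desired Hamiltonian isotopy relative to boundary.
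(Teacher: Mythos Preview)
Your strategy matches the paper's: both decompose each cobordism into cylindrical pieces (traces of Legendrian isotopies) plus a single saddle, translate the $D_4^-$ movie into the Lagrangian projection via the Ng resolution, and then argue that the saddles agree up to a compactly supported exact Lagrangian isotopy (hence Hamiltonian isotopy). The execution differs in flavor. The paper stays entirely diagrammatic: it writes down two explicit local models---one for a Reeb chord trapped between two crossings, one for a Reeb chord to the left of the leftmost crossing (created by a preliminary Reidemeister II)---draws the movies in both front and Lagrangian projection, and observes by inspection that after the Ng resolution the only difference between the $D_4^-$ movie and the pinch is a rotation of the strand before resolving, so the interpolating ``movie of movies'' is literally visible in the figures. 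An arbitrary contractible crossing is then reduced to one of these two models by Reidemeister III moves. Your route instead proposes Weinstein coordinates and an explicit contact Hamiltonian, finishing with a ``standard uniqueness of 1-handle attachments'' statement. That last step is the one place to be careful: such a uniqueness result in the exact Lagrangian cobordism setting is not entirely off-the-shelf, and the paper sidesteps it by making the interpolating isotopy explicit. The paper also emphasizes a subtlety you gloss over, namely controlling slopes in the front so that no spurious Reeb chords appear during the movie; without this one cannot be sure the two cobordisms are removing the \emph{same} Reeb chord.
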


	We prove this by giving a local model for the $D_4^-$ cobordism as a sequence of diagrams in both the front and Lagrangian projections and then describing 
	an exact Lagrangian isotopy between the two cobordisms that fixes the boundary. Since compactly supported Lagrangian isotopy is equivalent to Hamiltonian isotopy \cite[Theorem 3.6.7]{ExactLagrangian=Hamiltonian}, this implies the proposition. We then use Proposition \ref{lemma:local models} to prove Theorem \ref{thm: isotopy1} in the general case of Lagrangian fillings of $\la(\beta)$. In the specific case of $\la(A_{n-1})$ we obtain the following corollary.
	
	\begin{corollary}\label{cor: isotopyAn}
	   The pinching sequence filling $L_\sigma$ is Hamiltonian isotopic to the weave filling $L_{\mathcal{T}_\sigma}$.
	\end{corollary}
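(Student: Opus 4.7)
The plan is to leverage Theorem \ref{thm: isotopy1}, specialized to the case of $\la(A_{n-1})$, to conclude that $L_\sigma$ is Hamiltonian isotopic to a unique weave filling, and then to identify this weave filling as the one given by the 2-graph dual to $\mathcal{T}_\sigma$. Because of the uniqueness clause of Theorem \ref{thm: isotopy1}, it suffices to produce \emph{any} weave filling Hamiltonian isotopic to $L_\sigma$ and check that its associated triangulation is $\mathcal{T}_\sigma$.

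I would argue by induction on $n$. The base case $n=1$ is immediate, since both $L_\sigma$ and $L_{\mathcal{T}_\sigma}$ are Hamiltonian isotopic to the standard Lagrangian disk filling the max-$\tb$ unknot $\la(A_0)$. For the inductive step, consider the first pinching operation in the construction of $L_\sigma$, which resolves the crossing labeled $\sigma(1)$ in $\la(A_{n-1})$. By Proposition \ref{lemma:local models}, this pinching cobordism is Hamiltonian isotopic, relative to its boundary, to a $D_4^-$ cobordism. In the vertical-weave presentation this $D_4^-$ cobordism is encoded by a trivalent vertex attached at the top of the 2-graph, which, under the polygon-to-graph duality, corresponds exactly to removing a triangle whose middle vertex carries the label $\sigma(1)$.

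The key combinatorial identification is that this triangle is precisely the ear clipped from $\mathcal{T}_\sigma$ by the first step of the clip sequence bijection: by the defining property of $\sigma$ being 312-avoiding, $\sigma(1)$ is the smallest-labeled middle vertex of an ear of $\mathcal{T}_\sigma$. After the first pinching, the remaining exact Lagrangian cobordism fills a Legendrian isotopic to $\la(A_{n-2})$ (after relabeling crossings), and the remaining pinching sequence in the order $\sigma(2),\dots,\sigma(n)$ realizes the pinching sequence filling $L_{\sigma'}$, where $\sigma'\in S_{n-1}$ is the 312-avoiding permutation obtained by deleting $\sigma(1)$ from $\sigma$. By the inductive hypothesis, $L_{\sigma'}$ is Hamiltonian isotopic to $L_{\mathcal{T}_{\sigma'}}$, and $\mathcal{T}_{\sigma'}$ is precisely the triangulation of the $(n+1)$-gon obtained from $\mathcal{T}_\sigma$ by clipping the ear at $\sigma(1)$. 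Concatenating the trivalent vertex from the first $D_4^-$ cobordism with this inductively constructed weave recovers the 2-graph dual to $\mathcal{T}_\sigma$, yielding the desired Hamiltonian isotopy.

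The main obstacle lies in the local bookkeeping at the first step: one must verify that pinching the crossing labeled $\sigma(1)$ is actually equivalent, as an elementary cobordism attached to the top of the vertical weave, to the insertion of a trivalent vertex whose dual triangle has middle vertex $\sigma(1)$. This requires a careful check that the labeling conventions introduced for vertical weaves, namely that the edge of the 2-graph most immediately clockwise from vertex $i$ of $\mathcal{T}$ is labeled $i$ and that in the $D_4^-$ cobordism the surviving crossing is identified with the leftmost of the original pair, are compatible. Once this local matching is confirmed, together with the observation that the labels of crossings in the residual Legendrian $\la(A_{n-2})$ correspond canonically to the vertex labels of the clipped polygon, the induction proceeds routinely and the corollary follows.
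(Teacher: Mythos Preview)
Your proposal is correct and follows essentially the same approach as the paper: both arguments use Proposition~\ref{lemma:local models} to replace each pinching cobordism by a $D_4^-$ cobordism and then verify, via the labeling conventions for vertical weaves, that the resulting sequence of trivalent vertices is exactly the 2-graph dual to $\mathcal{T}_\sigma$ under the clip sequence bijection. The only difference is organizational: the paper states this as a single direct observation (``a 0-resolution at the crossing $i$ corresponds to a trivalent vertex where the incident rightmost edge is labeled by $i$''), whereas you unwind it as an explicit induction on $n$, clipping one ear at a time; your more detailed bookkeeping is not strictly necessary but does make the role of the labeling conventions transparent.
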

	
	The vertical weave construction we use in the proof of Corollary \ref{cor: isotopyAn} also allows us to argue that a 312-avoiding permutation yields a unique pinching sequence filling up to Hamiltonian isotopy, as we explain below. Finally, we conclude the section with a proof of Theorem \ref{thm: orbit size} (1) as a further corollary of Theorem \ref{thm: isotopy1}. 
	
\subsection{Proof of Theorem \ref{thm: isotopy1}} We begin with a proof of Proposition \ref{lemma:local models}.
	
	\begin{proof}[Proof of Proposition \ref{lemma:local models}] 
We give two local models of a $D_4^-$ cobordism, depicted in Figures \ref{fig:localmiddle} and \ref{fig:localleft} as movies in the front (top) and Lagrangian (bottom) projections. The first local model depicts the removal a Reeb chord trapped between a pair of crossings and a 0-resolution of the rightmost crossing. The second local model depicts the removal of a Reeb chord originally appearing to the left of the leftmost crossing and a 0-resolution of this crossing. This is accomplished by first applying a Legendrian isotopy to create a pair of crossings with this Reeb chord trapped between them and proceeding as in the first local model. 

The main difficulty in our comparison of these local models to the pinching cobordism is to unambiguously relate the Reeb chord removed in the $D_4^-$ cobordism to the Reeb chord removed in the pinching cobordism. This means that we must carefully manipulate the slope of the Legendrian in the front projection to ensure that no new Reeb chords are introduced throughout the process. The local models allow us to verify by inspection that no new Reeb chords appear at any point in this cobordism, as the slopes of the front projection are specified so that no new intersections appear in the Lagrangian projection. 
	
Armed with a local model for the slicing of the $D_4^-$ cobordism, we now describe an exact Lagrangian isotopy between this local model and the pinching cobordism. Starting in the front projection of $\la(\beta)$, a slicing of the pinching cobordism as defined in Subsection \ref{sub: pinching cobordism} consists of applying the Ng resolution, resolving a crossing, and then undoing the Ng resolution. Restricting to a neighborhood of a crossing allows us to describe the desired isotopy.

First, consider a contractible Reeb chord with a neighborhood resembling one of the two models shown in Figures \ref{fig:localmiddle} and \ref{fig:localleft}.
In such a neighborhood, the exact Lagrangian isotopy between the two cobordisms is visible when examining the Lagrangian projection of the local models depicted in Figures \ref{fig:localmiddle} and \ref{fig:localleft} (bottom). Indeed, after applying the Ng resolution, the only difference between these local models and the pinching cobordism in the Ng resolution is the rotating of the strand before resolving. Therefore, the movie of movies realizing the exact Lagrangian isotopy from the $D_4^-$ cobordism to the pinching cobordism consists of incrementally applying the Legendrian isotopy of the Ng resolution, rotating the crossing before pinching, and then undoing the Ng resolution.

	\begin{center}
		\begin{figure}[]{ \includegraphics[width=.95\textwidth]{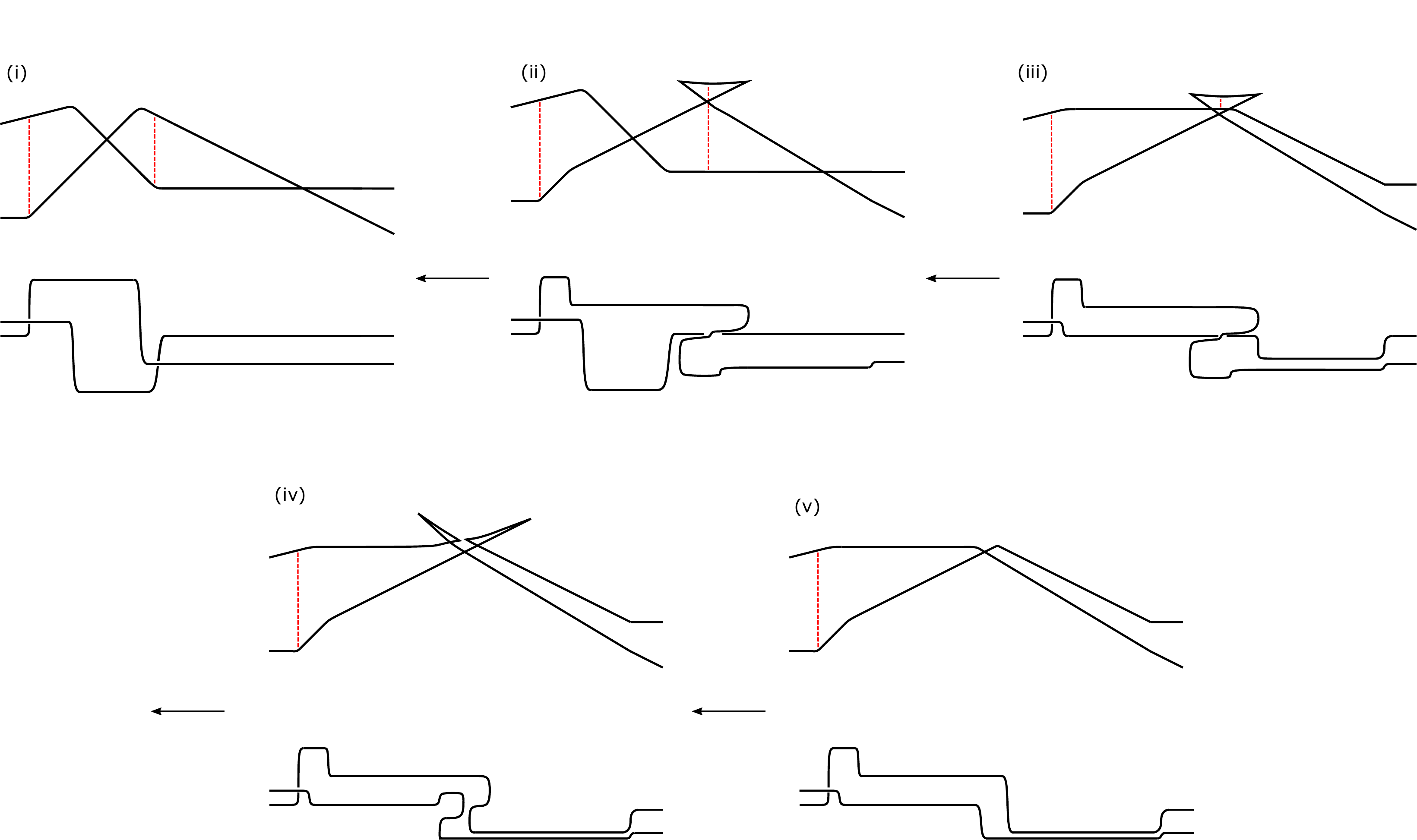}}\caption{Local model of a $D_4^-$ cobordism applied to a pair of crossings in the front (top) and Lagrangian (bottom) projections. Reeb chords are depicted by red dashed lines. The direction of the arrows indicate a  cobordism from the concave end to the convex end.}
			\label{fig:localmiddle}\end{figure}
	\end{center}

	\begin{center}
		\begin{figure}[]{ \includegraphics[width=\textwidth]{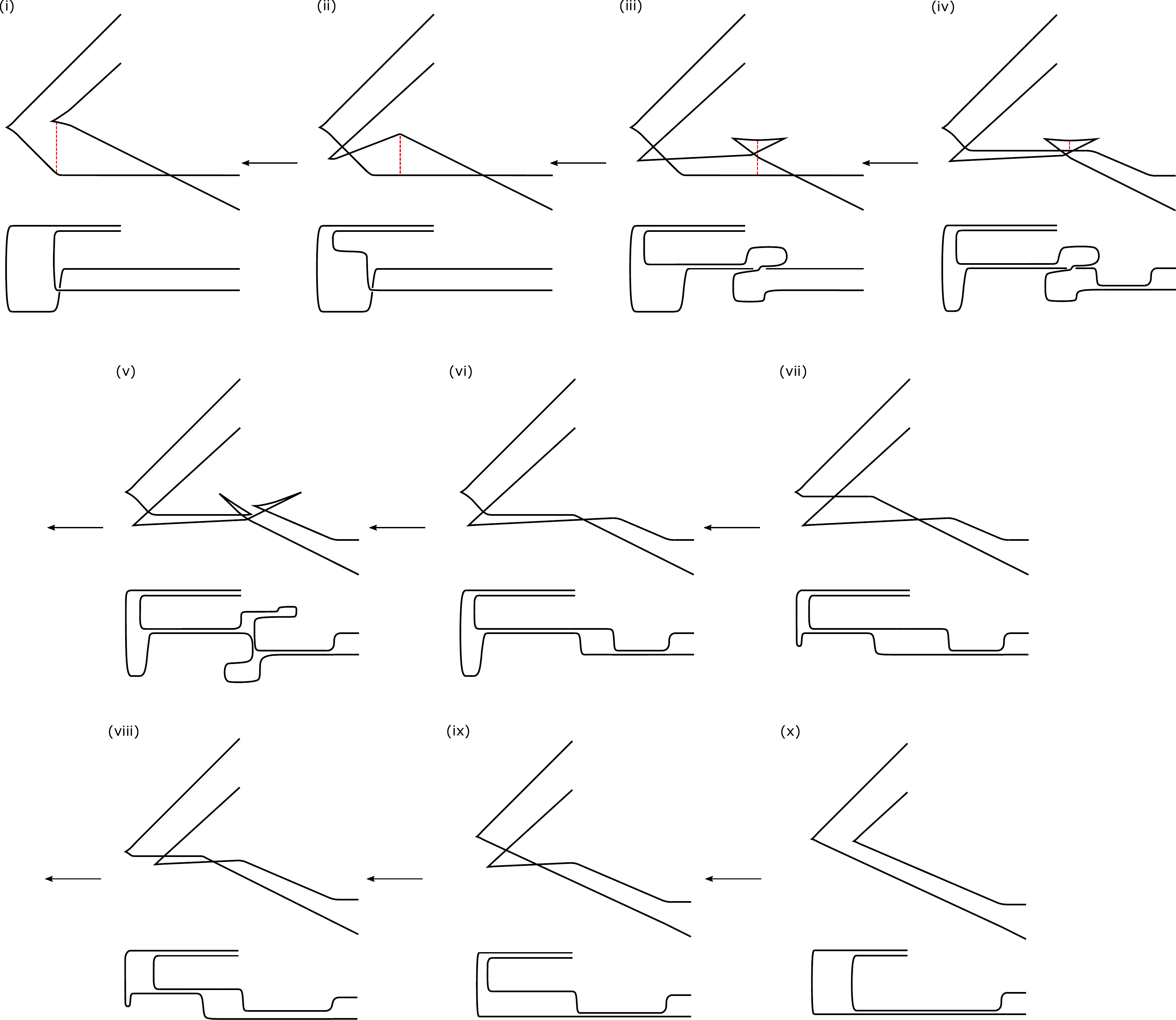}}\caption{Local model of the leftmost crossing in the front (top) and Lagrangian (bottom) projections with a single Reeb chord depicted by a red dashed line. We first apply a Reidemeister II move in order to artificially introduce an additional crossing so that there is a single Reeb chord trapped between the new crossing and the original crossing. The $D_4^-$ cobordism is performed in (iii)-(vi) and the remaining part of the cobordism undoes the Reidemeister II move without creating any new Reeb chords.}
			\label{fig:localleft}\end{figure}
	\end{center}
Now consider a Reeb chord that does not admit a neighborhood resembling one of our two local models. 
In this case, we can rotate all of the crossings that appear to the left of the Reeb chord past the cusps by performing a sequence of Reidemeister III moves 
so that we obtain a neighborhood resembling the initial figure in \ref{fig:localleft}. We then apply the local model given in Figure \ref{fig:localleft} to resolve this crossing. We can then rotate the remaining crossings back, and by analogous reasoning to above, the resulting cobordism is Hamiltonian isotopic to the pinching cobordism at $z$.\end{proof}

Now that we have established the equivalence between the pinching cobordism and the $D_4^-$ cobordism, Theorem \ref{thm: isotopy1} follows as a corollary. 

\begin{proof}[Proof of Theorem \ref{thm: isotopy1}]
    By construction, any weave filling is a decomposable Lagrangian filling made up of elementary cobordisms corresponding to Reidemeister III moves and $D_4^-$ cobordisms. By Proposition \ref{lemma:local models}, the $D_4^-$ cobordism is Hamiltonian isotopic to a pinching cobordism. Therefore, a Legendrian weave filling is Hamiltonian isotopic to a decomposable Lagrangian filling made up of Reidemeister III moves and pinching cobordisms.
\end{proof}
	
\subsection{Lagrangian fillings of $\la(A_{n})$}
	
To complete the proof of Corollary \ref{cor: isotopyAn} we argue that the clip sequence bijection defined in Subsection \ref{sub: Kalman prelims} gives a one-to-one correspondence between fillings that resolves crossings in the same order.
	
\begin{proof}[Proof of Corollary \ref{cor: isotopyAn}]
	 Let $\sigma$ be a 312-avoiding permutation indexing a pinching sequence filling $L_\sigma$ of $\la(A_{n-1})$ and consider the vertical weave corresponding to the triangulation $\mathcal{T}_\sigma$. By construction, a 0-resolution at the crossing $i$ in $\la(A_{n-1})$ corresponds to a trivalent vertex where the incident rightmost edge is labeled by $i$. By Proposition \ref{lemma:local models}, these denote Hamiltonian isotopic exact Lagrangian cobordisms applied to corresponding Reeb chords. Thus, the filling $L_\sigma$ is Hamiltonian isotopic to the weave filling dual to the triangulation $\mathcal{T}_\sigma$.\end{proof}

It is claimed without proof in \cite[Section 8.1]{EHK} that, in addition to yielding the same Floer-theoretic invariant, there is a Hamiltonian isotopy between pinching sequence fillings represented by permutations $\sigma= \dots i\, k\,\dots j\dots $ and $\sigma'=\dots k \, i \dots j\dots$ in $S_n$. This claim implies that a 312-avoiding permutation represents a unique equivalence class of Lagrangian filling up to Hamiltonian isotopy. The claim follows from Corollary \ref{cor: isotopyAn} and the lemma below.

	\begin{lemma}\label{planar isotopy}
	
	Let $(x_i, z_i), (x_j, z_j),$ and  $(x_k, z_k)$ denote the coordinates of three trivalent vertices in the 2-graph $\Gamma$ satisfying $x_i<x_j<x_k$ and $z_j<z_k<z_i$. The planar isotopy 
	between $\Gamma$ and the 2-graph $\Gamma'$ with trivalent vertices at $(x_i, z_k), (x_j, z_j),$ and $(x_k, z_i)$ 
	lifts to a compactly supported Hamiltonian isotopy of the fillings $L_\Gamma$ and $L_\Gamma'$ fixing the boundary.
	\end{lemma}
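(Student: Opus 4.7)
The plan is to realize the given planar isotopy of 2-graphs as a 1-parameter family $\Gamma_t$ with $\Gamma_0=\Gamma$ and $\Gamma_1=\Gamma'$, lift it through Definition \ref{def: weave} to a Legendrian isotopy of weave surfaces $\La(\Gamma_t)\sse J^1 D^2$ fixing the boundary, and then pass through the Lagrangian projection to obtain a compactly supported Lagrangian isotopy of fillings in $(\R^4,\omega_{\st})$. Applying the standard fact that compactly supported Lagrangian isotopies are Hamiltonian \cite[Theorem 3.6.7]{ExactLagrangian=Hamiltonian} -- the same result invoked in the proof of Proposition \ref{lemma:local models} -- then delivers the conclusion.

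For the explicit construction of the family, I would parametrize by linearly moving vertex $i$ from $(x_i,z_i)$ to $(x_i,z_k)$, vertex $k$ from $(x_k,z_k)$ to $(x_k,z_i)$, and keeping vertex $j$ fixed. The conditions $x_i<x_j<x_k$ and $z_j<z_k<z_i$ guarantee that the straight-line paths of $i$ and $k$ both remain strictly above the height $z_j$ of $j$, so the three vertices stay at distinct points and $j$ remains below the horizontal strip traversed by the moving vertices at every $t$. The edges of $\Gamma$ incident to $i$, $j$, $k$ can then be continuously deformed along with the vertices so that no pair of edges cross at any intermediate time, and each $\Gamma_t$ remains a valid planar 2-graph with the same combinatorial structure as $\Gamma$.

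Applying Definition \ref{def: weave} pointwise in $t$ produces a continuous family of Legendrian weaves $\La(\Gamma_t)\sse J^1 D^2$. As noted in the discussion following Definition \ref{def: weave}, the front projection can be chosen so that the distance between sheets grows monotonically toward $\partial D^2$, which guarantees that no Reeb chords appear for any $t$; hence $\Pi(\La(\Gamma_t))$ is an embedded exact Lagrangian filling of $\la(A_{n-1})$ throughout. Composing with the Weinstein neighborhood embedding of $J^1 D^2$ into $(\R^5,\xi_{\st})$ and the subsequent Lagrangian projection to $(\R^4,\omega_{\st})$ yields a compactly supported Lagrangian isotopy between $L_\Gamma$ and $L_{\Gamma'}$ fixing the boundary, which by the cited result is Hamiltonian.

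The main obstacle is the middle step: verifying that the combinatorial structure of the 2-graph can be preserved throughout the height swap of $i$ and $k$ without altering the Legendrian isotopy class of the weave surface. This is exactly where the position hypothesis is essential -- the requirement that $j$ lie horizontally between $i$ and $k$ but vertically below both is precisely what prevents an edge incident to $j$ from being forced across the straight-line trajectories of $i$ or $k$, and thereby prevents the creation of spurious singularities or Reeb chords along the family $\La(\Gamma_t)$. Once this combinatorial stability is established, the remainder of the argument is a routine application of the weave construction and the compact-support-implies-Hamiltonian principle.
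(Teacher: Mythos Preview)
Your proposal is correct and follows essentially the same approach as the paper: lift the planar isotopy of $2$-graphs to a Legendrian isotopy of weaves in $J^1 D^2$ fixing the boundary, project to obtain a compactly supported exact Lagrangian isotopy of fillings, and invoke \cite[Theorem 3.6.7]{ExactLagrangian=Hamiltonian}. The paper's proof is considerably terser---it simply asserts that the planar isotopy lifts and does not spell out the explicit straight-line family or the role of the positional hypotheses---so your added detail on why the combinatorics remain stable and why no Reeb chords appear is a reasonable elaboration rather than a different argument.
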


	\begin{proof}
	    By construction, the planar isotopy between $\Gamma$ and $\Gamma'$ lifts to a Legendrian isotopy between the weaves $\La(\Gamma)$ and $\La(\Gamma')$ in $J^1(D^2)$. Note that this planar isotopy can be taken to be the identity at the boundary $\partial\La(\Gamma)$. Considering the Lagrangian projection of this sequence of weaves yields a compactly supported exact Lagrangian isotopy between the Lagrangian fillings $L_\Gamma$ and $L_\Gamma'$. By \cite[Theorem 3.6.7]{ExactLagrangian=Hamiltonian}, this implies the existence of a compactly supported Hamiltonian isotopy between the two fillings. \end{proof}	  

By Corollary \ref{cor: isotopyAn}, the exact Lagrangian isotopy of the weave filling extends to pinching sequence fillings. Thus, our result implies that there are exactly a Catalan number $C_n$ of pinching sequence fillings\footnote{Note that a precise classification of fillings currently only exists for the Legendrian unknot. In general, it is not known whether every filling is constructible, i.e. can be given as a series of elementary cobordism.} of $\la(A_{n-1})$ up to Hamiltonian isotopy.

We conclude this section with a proof of the orbital structure described in Theorem \ref{thm: orbit size} (1) as a corollary of Theorem \ref{thm: isotopy1}. Namely, the orbital structure of the K\'alm\'an loop action on pinching sequence fillings of $\la(A_{n-1})$ can be obtained from the Hamiltonian isotopy between the pinching sequence filling $L_\sigma$ and weave filling $L_{\mathcal{T}_\sigma}$.

\begin{proof}[Proof of Theorem \ref{thm: orbit size} (1)]
    Let $L_\sigma$ be a filling of $\la(A_{n-1})$ and consider the Hamiltonian isotopic weave filling $L_{\mathcal{T}_\sigma}$ with corresponding 2-graph $\Gamma$ dual to the triangulation $\mathcal{T}_\sigma$.  The K\'alm\'an loop action on weave fillings is geometrically described as a cylinder rotating the entire 2-graph $\Gamma$ by $\frac{2\pi}{n+2}$ radians counterclockwise. This can be readily observed from the fact that crossings of $\la(A_{n-1})$ are represented by edges of the dual graph intersecting the boundary of the $(n+2)$-gon. Therefore, the correspondence between triangulations $\mathcal{T}_\sigma$ and weave fillings $L_{\mathcal{T}_\sigma}$ implies that the orbital structures of triangulations under the action of rotation and weave fillings under the action of the K\'alm\'an loop coincide. 
\end{proof}

Note here the appearance of $\la(A_{n-1})$ as the $(-1)$-framed closure of the braid $\sigma^{n+2}$ in the description of the weave filling. This geometrically describes why the K\'alm\'an loop action on the rainbow closure of $\sigma^n$ has order $n+2$ as an action on the $n+2$ crossings of the $(-1)$-framed closure.
	
	\section{Algebraic Proof of Theorem \ref{thm: orbit size}}\label{section:algebraic}

		In this section we prove Theorem \ref{thm: orbit size} by examining the K\'alm\'an loop action on the augmentation variety $\Aug(\la(A_{n-1}))$ of the Legendrian link $\la(A_{n-1})$. As discussed in Subsection \ref{sub: DGA prelims}, an embedded exact Lagrangian filling yields the inclusion of an algebraic torus into the augmentation variety $\Aug(\la(A_{n-1}))$. From \cite{YuPan}, we have an explicit computation of a set of coordinate functions $\{s_1, \dots s_{n-1}\}$ on an induced toric chart coming from a pinching sequence filling $L$; namely, this set of coordinates is in bijection with the relative cycles associated to the unstable manifolds of the saddle critical points 
	    for $L$. Naively, we might hope to distinguish the Hamiltonian isotopy classes of the Lagrangian fillings under the K\'alm\'an loop action by studying the associated toric charts and their $s_i$ coordinate functions. In practice, these {\it local} coordinate functions are somewhat difficult to compare under this particular action. Instead, we consider the action of the K\'alm\'an loop on the set of {\it global} regular functions $\{\Delta_{i,j}\}$ with $\Delta_{i, j}\in \Z[\Aug(\la(A_{n-1}))]$, defined in Subsection \ref{sub: braid matrices}. In fact, $\Delta_{i, j}\in \Z[z_1, \dots, z_n]$ are globally defined polynomials, which restrict to global regular functions on the augmentation variety $\Aug(\la(A_{n-1}))\subseteq \Z^{n}$. 
	
When considering the restriction of the $\Delta_{i, j}$ functions to the toric chart induced by the augmentation $\epsilon_\sigma$, Theorem \ref{thm: signs} below establishes that the correspondence between diagonals $D_{i-1, j-1}$ of the triangulation $\mathcal{T}_\sigma$ and the functions $\Delta_{i, j}$ is a $\Z_{n+2}$-equivariant map. We then show in Subsection \ref{sub:monomials} that the $\Delta_{i,j}$ functions corresponding to diagonals of a triangulation $\mathcal{T}_\sigma$ restrict to a coordinate basis of the toric chart defined by $L_\sigma$. In addition, we give an explicit formula for these coordinate functions as monomials in the $s_i$ local coordinates. It follows that the induced action on the set of augmentations $\epsilon_\sigma$ in the augmentation variety $\Aug(\la(A_{n-1}))$ is equivalent to the action of rotation on triangulations of the $(n+2)$-gon, from which we can conclude the orbital structure as given in Theorem \ref{thm: orbit size} (1). See Subsection \ref{sub:clusters} for a cluster-algebraic motivation for the $\Delta_{i, j}$ functions and triangulations of the $(n+2)$-gon.        

	\subsection{The K\'alm\'an loop action on $\{\Delta_{i,j}\}$}

Let us start by describing the action of the K\'alm\'an loop on the global regular functions $\Delta_{i, j}$ using Euler's identity for continuants. All indices in this section are modulo $n+2$. Recall that we denote by $\vartheta\in \Aut(\Z[\Aug(\la(A_{n-1}))])$ 
	the automorphism induced by the K\'alm\'an loop acting on the augmentation variety $\Aug(\la(A_{n-1}))=\{(z_1, \ldots, z_n) |X_n=0\}\subseteq \Z^n$, where $X_n\in \Z[z_1, \dots, z_n]$ is the polynomial defined by $X_n=-1+\Delta_{1, n+2}$. 
	The action of the K\'alm\'an loop on the set of global regular functions $\{\Delta_{i, j}\}$ is described in the following restatement of Theorem \ref{thm: orbit size} (2).

	\begin{theorem}\label{thm: signs} The global regular functions $\Delta_{i, j}$ in $\Z[\Aug(\la(A_{n-1}))]$ satisfy the equation
		\begin{equation}\label{eq loop action}
		    \vartheta(\Delta_{1,k+1})+(-1)^{n-1}\Delta_{k,n+2}=-\Delta_{2,k}X_n
		\end{equation}
		as global polynomials in ambient $\Z^n$ for $2< k < n+2$.
	\end{theorem}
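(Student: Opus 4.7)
The plan is to reduce the identity to an immediate consequence of the continuant recursion \eqref{eq:recurse} combined with the special case of Euler's identity for continuants recorded in \eqref{eq Euler}. Since $\Delta_{i,j}$ and $X_n$ are defined as elements of the ambient polynomial ring $\Z[z_1,\dots,z_n]$, I would verify the identity in this polynomial ring; restriction to the vanishing locus then gives the stated equality of regular functions on $\Aug(\la(A_{n-1}))$. The proof then splits into two independent computations which combine in a single line.

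First I would obtain a closed formula for $\vartheta(\Delta_{1,k+1})$ in terms of the global $\Delta_{i,j}$ functions. Recalling from Subsection~\ref{sub: Kalman prelims} that $\vartheta(z_1)=-\Delta_{2,n+2}$ and $\vartheta(z_i)=z_{i-1}$ for $2\le i\le n$, and writing $\Delta_{1,k+1}$ as the continuant $K_{k-1}(z_1,\dots,z_{k-1})$, one finds
\[
\vartheta(\Delta_{1,k+1})=K_{k-1}\bigl(-\Delta_{2,n+2},\,z_1,\dots,z_{k-2}\bigr).
\]
A single application of the defining continuant recursion with first argument $-\Delta_{2,n+2}$, followed by the identifications $K_{k-2}(z_1,\dots,z_{k-2})=\Delta_{1,k}$ and $K_{k-3}(z_2,\dots,z_{k-2})=\Delta_{2,k}$ from the lemma relating $K_m$ to the $\Delta_{i,j}$, yields
\[
\vartheta(\Delta_{1,k+1})=\Delta_{2,k}-\Delta_{2,n+2}\,\Delta_{1,k}.
\]

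Next I would combine this formula with Euler's identity \eqref{eq Euler}, which after the normalization $\Delta_{1,2}=1$ reads $\Delta_{1,n+2}\Delta_{2,k}-\Delta_{1,k}\Delta_{2,n+2}=(-1)^n\Delta_{k,n+2}$, so that $(-1)^{n-1}\Delta_{k,n+2}=\Delta_{1,k}\Delta_{2,n+2}-\Delta_{1,n+2}\Delta_{2,k}$. Substituting both expressions into the left-hand side of the desired identity, the $\Delta_{2,n+2}\Delta_{1,k}$ terms cancel and what remains is
\[
\vartheta(\Delta_{1,k+1})+(-1)^{n-1}\Delta_{k,n+2}=\Delta_{2,k}-\Delta_{1,n+2}\Delta_{2,k}=-\Delta_{2,k}(\Delta_{1,n+2}-1)=-\Delta_{2,k}X_n,
\]
which is the asserted equation. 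The main obstacle I anticipate is purely the sign bookkeeping: the factor $-\Delta_{2,n+2}$ inside $\vartheta(z_1)$ and the factor $(-1)^{\nu+1}$ appearing in the general Euler identity both contribute signs that must conspire to produce the $(-1)^{n-1}$ of the statement, and getting the wrong sign anywhere destroys the cancellation. Once the bookkeeping is settled, the identity is a two-line consequence of the continuant calculus—consistent with the slogan in the introduction that the K\'alm\'an loop action on $\Aug(\la(A_{n-1}))$ is a Floer-theoretic incarnation of Euler's identity for continuants.
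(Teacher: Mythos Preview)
Your proposal is correct and follows essentially the same approach as the paper: both compute $\vartheta(\Delta_{1,k+1})=\Delta_{2,k}-\Delta_{2,n+2}\Delta_{1,k}$ via a single application of the continuant recursion, and then reduce the identity to the special case~\eqref{eq Euler} of Euler's identity for continuants. The only cosmetic difference is that the paper argues by reducing the target equation to~\eqref{eq Euler}, whereas you substitute~\eqref{eq Euler} directly into the left-hand side and simplify; the content is identical.
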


	As a corollary, we see that the action of $\vartheta$ on the augmentation variety $\Aug(\la(A_{n-1}))$ coincides with the action of rotation on triangulations of the $n+2$-gon.
	
	\begin{corollary}\label{cor Aug rotation}
	    $\vartheta(\Delta_{i, j})=(-1)^{n}\Delta_{i-1,  j-1}$ as regular functions on $\Aug(\la(A_{n-1}))$ and the map $\Delta_{i, j}\to D_{i-1, j-1}$ is a $\Z_{n+2}$-equivariant map.
	\end{corollary}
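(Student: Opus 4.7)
The plan is to obtain Corollary \ref{cor Aug rotation} as a direct specialization of Theorem \ref{thm: signs} to the augmentation variety, combined with a short direct computation for the non-wraparound indices.

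First I would observe that $\Aug(\la(A_{n-1}))$ is by definition the zero locus $\{X_n = 0\}$, so substituting $X_n = 0$ into equation (\ref{eq loop action}) immediately yields
$$\vartheta(\Delta_{1, k+1}) = (-1)^n \Delta_{k, n+2}$$
as regular functions on $\Aug(\la(A_{n-1}))$ for each $2 < k < n+2$. This establishes the rotation formula for the boundary case $i = 1$, where the shifted index $i - 1 = 0$ wraps around to $n+2$ under the mod $n+2$ identification.

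For the remaining cases $i \geq 2$, the formula $\vartheta(\Delta_{i,j}) = \Delta_{i-1, j-1}$ follows directly from $\vartheta(z_k) = z_{k-1}$, recorded in Subsection \ref{sub: Kalman prelims}, applied entrywise to the braid-matrix product $\Delta_{i, j} = [\prod_{k=i}^{j-2} B(z_k)]_{(2,2)}$. The uniform statement $\vartheta(\Delta_{i, j}) = (-1)^n \Delta_{i-1, j-1}$ of the corollary is then obtained by absorbing the sign $(-1)^n$ into the cyclic indexing convention for $\Delta_{i,j}$ across the wraparound, so that the $i = 1$ case from Theorem \ref{thm: signs} and the $i \geq 2$ direct computation fit together coherently.

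The $\Z_{n+2}$-equivariance of the correspondence $\Delta_{i, j} \leftrightarrow D_{i-1, j-1}$ is then a formal consequence. The generator of $\Z_{n+2}$ acts on diagonals of the $(n+2)$-gon by the rotation $D_{a, b} \mapsto D_{a-1, b-1}$ with indices mod $n+2$, while on regular functions on $\Aug(\la(A_{n-1}))$ it acts by $\vartheta$, which by the rotation formula shifts both indices of $\Delta$ by $-1$; these two actions intertwine the bijection.

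I expect the principal technical point to be bookkeeping: fixing the cyclic indexing convention for $\Delta_{i, j}$ unambiguously, compatible with the symmetry $D_{a,b}=D_{b,a}$ of diagonals, and checking that the sign $(-1)^n$ is consistent with iterating $\vartheta$ around a full cycle of $n+2$ rotations. No genuinely new geometric or algebraic content beyond Theorem \ref{thm: signs} is required.
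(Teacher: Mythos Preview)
Your proposal is correct and follows essentially the same approach as the paper's proof: restrict Theorem \ref{thm: signs} to the locus $X_n = 0$ to obtain the wraparound case, invoke the formula $\vartheta(\Delta_{i,j}) = \Delta_{i-1,j-1}$ for $i>1$ already recorded in Subsection \ref{sub: Kalman prelims}, and read off the equivariance by matching the index shift with rotation of diagonals. You are slightly more explicit than the paper in separating the $i=1$ and $i\geq 2$ cases and in flagging the sign bookkeeping, but there is no substantive difference in method.
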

	
	\begin{proof}[Proof of Corollary \ref{cor Aug rotation}]
	    Restricting to $\Aug(\la(A_{n-1}))=\{X_n=0\}$ causes the right hand side of Equation \ref{eq loop action} to vanish. Therefore, by Theorem \ref{thm: signs}, the K\'alm\'an loop action on the restriction of $\Delta_{i,j}$ to $\Aug(\la(A_{n-1}))$ is $\vartheta(\Delta_{i, j})=(-1)^{n}\Delta_{i-1,  j-1}$. Under rotation, the diagonal $D_{i-1, j-1}$ maps to $D_{i-2, j-2}$. It follows that the correspondence between $\Delta_{i, j}$ restricted to the toric chart induced by $\epsilon_\sigma$ and a diagonal $D_{i-1, j-1}$ of the triangulation $\mathcal{T}_\sigma$ is a $\Z_{n+2}$-equivariant map.  
	\end{proof}
		
		
We now give a proof of the behavior of the $\Delta_{i,j}$ as ambient polynomials in $\Z^n$. 
Note here the appearance of Euler's identity for continuants in the form of Equation \ref{eq Euler}.
	
	\begin{proof}[Proof of Theorem \ref{thm: signs}]

	We first rewrite the left hand side of the desired equation using the continuant recursion relation \eqref{eq:recurse} and the action of $\vartheta$. 
		
		\begin{align*}
			\vartheta(\Delta_{1, k+1})+(-1)^{n-1}\Delta_{k,n+2}
			&=\vartheta(z_1\Delta_{2, k+1}+\Delta_{3, k+1})+(-1)^{n-1}\Delta_{k,n+2}\\
			&=-\Delta_{2, n+2}\Delta_{1, k}+\Delta_{2, k}+(-1)^{n-1}\Delta_{k,n+2}.
		\end{align*}

		We substitute this expression into the left hand side of the desired equation from Theorem \ref{thm: signs} to obtain
		$$-\Delta_{2, n+2}\Delta_{1, k}+\Delta_{2, k}+ (-1)^{n-1}\Delta_{k, n+2}= -\Delta_{2,k}(\Delta_{1, n+2}-1).$$
		
		In order to verify that this equation holds, we will apply the special case of Euler's identity for continuants given in Equation \ref{eq Euler}. To do so, we distribute the right hand side and subtract $\Delta_{2, k}$ from both sides to get
		
		$$-\Delta_{2, n+2}\Delta_{1, k}+ (-1)^{n-1}\Delta_{k, n+2}= -\Delta_{2,k}\Delta_{1, n+2}.$$
		
		This expression is equivalent to  	
		$$\Delta_{1, n+2}\Delta_{2,k}-\Delta_{1,k}\Delta_{2, n+2}=(-1)^{n}\Delta_{k,n+2},$$
		which is the identity given in Equation (\ref{eq Euler}).
		Thus, we have established Theorem \ref{thm: signs} and Theorem \ref{thm: orbit size} (2).
	\end{proof}

 	\subsection{The K\'alm\'an loop action on the augmentation variety}\label{sub:monomials}
	
	We now prove that the $\Delta_{i, j}$ functions corresponding to the diagonals of the triangulation $\mathcal{T}_\sigma$ define a coordinate basis on the toric chart induced by the filling $L_\sigma$. To do so, we first show that the $\Delta_{i, j}$ functions can be written as monomials in the local $s_i$ coordinate functions defined by the augmentation $\epsilon_\sigma$. We then define a bijection between the $\Delta_{i,j}$ corresponding to the triangulation $\mathcal{T}_\sigma$ and the $s_i$ variables on the toric chart induced by $L_\sigma$. Throughout the remainder of this section, let $\sigma$ denote a 312-avoiding permutation corresponding to a pinching sequence filling and $D_{i, j}$ be a diagonal of the triangulation $\mathcal{T}_\sigma$. The goal of this subsection will be to prove the following proposition.

	\begin{proposition}\label{prop: monomials}
		
		The set of all $\Delta_{i, j}$ corresponding to the diagonals of the triangulation $\mathcal{T}_\sigma$ forms a basis for the toric chart induced by the augmentation $\epsilon_\sigma$. 
	\end{proposition}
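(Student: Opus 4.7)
My plan is to proceed by induction on $n$, using the clip sequence bijection to peel off the first ear of $\mathcal{T}_\sigma$ in parallel with the first pinching cobordism of $L_\sigma$. For the base case $n=1$, the triangle $\mathcal{T}_\sigma$ has no diagonals and the toric chart is a point, so the statement is vacuous. For the inductive step, suppose the first clip removes the middle vertex $m$ of an ear with sides $D_{m-1,m}, D_{m,m+1}, D_{m-1,m+1}$; by the clip sequence bijection this corresponds to performing the first pinching cobordism $\Phi_1$ at $z_{\sigma(1)}$, where $\sigma(1)$ is the recorded label. The remaining triangulation $\mathcal{T}_{\sigma'}$ of an $(n+1)$-gon corresponds, by inductive hypothesis, to the filling $L_{\sigma'}$ of $\lambda(A_{n-2})$ whose diagonals already yield monomial coordinates in $s_1, \ldots, \widehat{s_{\sigma(1)}}, \ldots, s_n$.

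The main computational step is to track how the polynomials $\Delta_{i,j}$ of $\mathcal{A}(\lambda(A_{n-1}))$ transform under $\Phi_1$. Using Pan's explicit formula for $\Phi_1(z_j)$ together with the continuant recursion $\Delta_{i,j} = z_i\Delta_{i+1,j} + \Delta_{i+2,j}$, I expect that $\Phi_1(\Delta_{i,j})$ factors as $s_{\sigma(1)}^{\epsilon}\,\widetilde{\Delta}_{i,j}$, where $\widetilde{\Delta}_{i,j}$ is the corresponding continuant for the smaller link $\lambda(A_{n-2})$ and $\epsilon \in \{-1,0,1\}$ is determined by whether the diagonal $D_{i-1,j-1}$ of $\mathcal{T}_\sigma$ straddles the ear vertex $m$ or not. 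Diagonals of $\mathcal{T}_\sigma$ that do not touch vertex $m$ remain diagonals of $\mathcal{T}_{\sigma'}$ and by induction their associated $\widetilde{\Delta}$ is a monomial in $\{s_k\}_{k \neq \sigma(1)}$, so the full $\Delta_{i,j}$ is a monomial in all the $s_i$. The ``ear-diagonal'' $D_{m-1,m+1}$ is handled directly: $\Delta_{m,m+2} = z_m$ and $\Phi_1(z_{\sigma(1)})=s_{\sigma(1)}$, so this diagonal contributes the monomial $s_{\sigma(1)}$.

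Having exhibited each $\Delta_{i,j}$ associated to a diagonal of $\mathcal{T}_\sigma$ as a Laurent monomial in $s_1, \ldots, s_n$, I then verify that the resulting set of $n-1$ monomials, together with the single minimum-cobordism relation $s_1 \cdots s_n = 1$, generates the coordinate ring $\mathbb{Z}[s_1^{\pm 1}, \ldots, s_n^{\pm 1}]/(s_1 \cdots s_n - 1)$. This amounts to checking that the exponent matrix, whose rows are indexed by diagonals and columns by the $s_i$, together with the all-ones row, is totally unimodular. The inductive construction provides an immediate induction proof of this fact: the new ear-diagonal contributes the standard basis vector $e_{\sigma(1)}$ to the exponent matrix, and Laplace expansion along that row/column reduces the determinant to the unimodularity hypothesis for $\mathcal{T}_{\sigma'}$.

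The principal obstacle is the careful bookkeeping in the first step, particularly the signs coming from the $(-1)^{|j-\sigma(1)|+1}$ factor in Pan's formula and the auxiliary $s_k^{-2}$ factors for indices $k$ lying strictly between $j$ and $\sigma(1)$. I expect these factors to contribute precisely the correct shift so that $\Phi_1(\Delta_{i,j})$ matches $s_{\sigma(1)}^\epsilon \widetilde{\Delta}_{i,j}$, but pinning down the exponent $\epsilon$ will require a case analysis based on the relative positions of $i, j, \sigma(1)$ -- equivalently, on which side of the diagonal $D_{i-1,j-1}$ the ear vertex $m$ sits. Once the monomial form is established, the basis/unimodularity conclusion is essentially a linear-algebraic consequence of the fact that clipping an ear introduces one new standard basis vector.
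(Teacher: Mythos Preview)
Your inductive plan hinges on the claim that a single pinch factors the continuant as $\Phi_1(\Delta_{i,j})=s_{\sigma(1)}^{\epsilon}\,\widetilde{\Delta}_{i,j}$ with $\widetilde{\Delta}_{i,j}$ a continuant for the smaller link. This is false whenever $\sigma(1)$ lies strictly in the interior of $\{i,\dots,j-2\}$. Concretely, take $n=4$, $\sigma=2134$, and the diagonal of $\mathcal{T}_\sigma$ giving $\Delta_{1,5}$. Pinching at $z_2$ yields, via the paper's identity $C(-s^{-1})B(s)C^{\sf T}(-s^{-1})=D(s)$,
\[
\Phi_1(\Delta_{1,5})=\big[B(z_1)D(s_2)B(z_3)\big]_{2,2}=-s_2^{-1}+s_2\,z_1z_3,
\]
which is not $s_2^{\epsilon}$ times any polynomial in $z_1,z_3$. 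The reason is structural: with $A=\prod_{k<i}B(z_k)$ and $C=\prod_{k>i}B(z_k)$ one has $(AD(s)C)_{2,2}=-s^{-1}A_{2,1}C_{1,2}+s\,A_{2,2}C_{2,2}$, so both $s$ and $s^{-1}$ contribute. The cancellation that eventually produces the monomial $s_i\cdots s_{j-2}$ only occurs after \emph{all} pinches have been applied; it is invisible at the level of the $(2,2)$ entry after one step. Put differently, the link obtained after one pinch carries the marked points $s_{\sigma(1)}^{\pm1}$, and subsequent $\Phi_l$ pick up $s_{\sigma(1)}^{-2}$ factors through them, so your inductive hypothesis ``$\widetilde{\Delta}$ is a monomial in $\{s_k\}_{k\neq\sigma(1)}$'' cannot hold as stated.

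The paper circumvents this by strengthening the inductive statement: instead of tracking the scalar $\Phi_l\circ\cdots\circ\Phi_1(\Delta_{i,j})$, it tracks the entire $2\times2$ matrix product and shows that each $\Phi_l$ replaces exactly one factor $B(z_{\sigma(l)})$ by $U(s_{\sigma(l)})$, $L(s_{\sigma(l)})$, or $D(s_{\sigma(l)})$ according to whether $\sigma(l)$ is the current leftmost, rightmost, or interior index. Only after the last step does one read off the $(2,2)$ entry, at which point the product has the shape (upper triangular)$\cdot$(lower triangular) and the entry is forced to be $s_i\cdots s_{j-2}$. If you want to keep your induction on $n$, you would likewise need to carry the full matrix (or an equivalent pair of continuants) through the inductive step, not just the scalar.

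Your unimodularity argument for the basis claim would work once the monomial form $\epsilon_\sigma(\Delta_{i,j})=s_i\cdots s_{j-2}$ is in hand, but note that the paper obtains this more directly: for each triangle $T$ with vertices $i-1,j-1,k-1$ of $\mathcal{T}_\sigma$ one has $(\Delta_{i,j})^{-1}(\Delta_{j,k})^{-1}\Delta_{i,k}=s_{j-1}$, giving an explicit bijection between triangles and the coordinates $s_1,\dots,s_{n-1}$, so the $\Delta$'s manifestly generate the torus without any determinant computation.
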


 The technical lemma introduced below will be used to prove the first part of Proposition \ref{prop: monomials}.

		\begin{lemma}\label{lemma: Fibonacci}
			For any diagonal $D_{i-1, j-1}$ in the triangulation $\mathcal{T}_\sigma$, the image of the regular function $\Delta_{i, j}$ in the toric chart induced by the augmentation $\epsilon_{\sigma}$ is given by
		$\epsilon_\sigma(\Delta_{i,j})=s_i\dots s_{j-2}.$

		\end{lemma}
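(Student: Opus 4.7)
The plan is to proceed by induction on $j-i$, the combinatorial length of the diagonal $D_{i-1,j-1}$. The case $j=i+1$ is immediate, since $\Delta_{i,i+1}=1$ matches the empty product on the right.

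The first substantive case, $j=i+2$, is the heart of the argument: the diagonal $D_{i-1,i+1}$ lying in $\mathcal{T}_\sigma$ means vertex $i$ is the middle of an ear, so no diagonal of $\mathcal{T}_\sigma$ is incident to $i$, and the claim reduces to $\epsilon_\sigma(z_i)=s_i$. I would track $z_i$ through the composition $\Phi_{min}\circ\Phi_n\circ\cdots\circ\Phi_1$: at the step $t=\sigma^{-1}(i)$, Pan's rule sends the current form of $z_i$ to $s_i$, and each later $\Phi_r$ preserves this monomial (the special case $i=n$ is handled using $\Phi_{min}(s_n)=s_1^{-1}\cdots s_{n-1}^{-1}$ together with the global relation $s_1\cdots s_n=1$ coming from $\Phi_{min}\circ\partial=0$). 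It therefore suffices to verify that no earlier map $\Phi_s$ with $s<t$ modifies $z_i$; by Pan's formula this amounts to showing $i\notin T^{\sigma(s)}_\sigma$ for such $s$. The ear hypothesis enters here: if $i\in T^{\sigma(s)}_\sigma$ for some earlier $s$, the defining conditions of $T^{\sigma(s)}_\sigma$ force every label strictly between $\sigma(s)$ and $i$ to have been clipped before step $s$, so $i$ and $\sigma(s)$ are adjacent in the polygon remaining at time $s$; clipping $\sigma(s)$ then draws a diagonal of $\mathcal{T}_\sigma$ incident to vertex $i$, contradicting the ear hypothesis.

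For the inductive step, let $(i-1,k-1,j-1)$ be the unique triangle of $\mathcal{T}_\sigma$ sharing $D_{i-1,j-1}$ and sitting on the small sub-polygon side of that diagonal, so that $i<k<j$ and both $D_{i-1,k-1}$ and $D_{k-1,j-1}$ lie in $\mathcal{T}_\sigma$ (or are polygon sides). The inductive hypothesis yields $\epsilon_\sigma(\Delta_{i,k})=s_i\cdots s_{k-2}$ and $\epsilon_\sigma(\Delta_{k,j})=s_k\cdots s_{j-2}$. Combining the continuant splitting identity $\Delta_{i,j}=\Delta_{i,k}\,\Delta_{k+1,j}+\Delta_{i,k+1}\,\Delta_{k,j}$, obtained by block-factoring $\prod_{\ell=i}^{j-2}B(z_\ell)$ at index $k$, with the right-hand continuant recursion $\Delta_{i,k+1}=\Delta_{i,k}\,z_{k-1}+\Delta_{i,k-1}$, I would rewrite
\begin{equation*}
\Delta_{i,j}=\Delta_{i,k}\,z_{k-1}\,\Delta_{k,j}+R,\qquad R:=\Delta_{i,k}\,\Delta_{k+1,j}+\Delta_{i,k-1}\,\Delta_{k,j},
\end{equation*}
and seek to assemble $s_i\cdots s_{j-2}$ as $\epsilon_\sigma(\Delta_{i,k})\cdot\epsilon_\sigma(z_{k-1})\cdot\epsilon_\sigma(\Delta_{k,j})+\epsilon_\sigma(R)$.

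The hard part will be this final assembly: crucially, $\epsilon_\sigma(z_{k-1})$ is \emph{not} in general the monomial $s_{k-1}$, since vertex $k-1$ need not be an ear of $\mathcal{T}_\sigma$, and $R$ need not vanish after $\epsilon_\sigma$ either. Instead, the two contributions are Laurent polynomials in the $s_i$ whose extra terms cancel against one another, leaving the single monomial $s_i\cdots s_{j-2}$. The cleanest way to execute this is to strengthen the inductive hypothesis so that it computes $\epsilon_\sigma(\Delta_{a,b})$ for \emph{every} pair $(a,b)$, giving an explicit Laurent expression keyed to the relative position of $D_{a-1,b-1}$ and $\mathcal{T}_\sigma$; the cancellations producing $s_i\cdots s_{j-2}$ then become combinatorial identities on the clip sequence tracked through the braid-matrix recursions, and the case of diagonals of $\mathcal{T}_\sigma$ drops out as the collapse of the generic Laurent expression to a single monomial.
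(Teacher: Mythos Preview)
Your base case $j=i+2$ is essentially sound, though the sentence ``clipping $\sigma(s)$ then draws a diagonal of $\mathcal{T}_\sigma$ incident to vertex $i$'' needs a small patch: the base $D_{c,i}$ of the clipped ear triangle could a priori be a polygon edge rather than a diagonal, and you should rule that out by observing that the only polygon edges incident to $i$ are $D_{i-1,i}$ and $D_{i,i+1}$, while neither $i-1$ nor $i+1$ can become an ear before $i$ is removed (their candidacy is blocked precisely by the ear triangle at $i$).

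The genuine gap is in the inductive step. You correctly identify that $\epsilon_\sigma(z_{k-1})$ is not a monomial and that $\epsilon_\sigma(R)$ need not vanish, and you propose to repair this by strengthening the hypothesis to compute $\epsilon_\sigma(\Delta_{a,b})$ for \emph{all} pairs $(a,b)$. But this strengthening is the entire content of the lemma and is never carried out: you neither state the putative Laurent formula for general $(a,b)$ nor verify any of the cancellations. Concretely, already in the fan triangulation of a hexagon (diagonals $D_{1,3},D_{1,4},D_{1,5}$, clip sequence $\sigma=2\,3\,4\,1$) with $i=2$, $j=6$, $k=5$, the term $\epsilon_\sigma(z_4)$ is a genuine binomial and your decomposition requires $\epsilon_\sigma(\Delta_{2,4})$ to supply exactly the correcting term; tracking such corrections through an arbitrary triangulation is tantamount to proving the Laurent phenomenon for the $A_{n-1}$ cluster algebra by hand, which your outline does not do.

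The paper takes a completely different route that sidesteps this difficulty. Rather than induct on $j-i$, it inducts on the pinching step $l$ and tracks the full matrix product $\prod_{k=i}^{j-2}B(\Phi_{l}\circ\cdots\circ\Phi_1(z_k))$. The key computation is that applying $\Phi_l$ replaces the single factor $B(z_{\sigma(l)})$ by an explicit upper-triangular, lower-triangular, or diagonal $2\times 2$ matrix (depending on whether $\sigma(l)$ is the current minimum, maximum, or interior index), via a short list of matrix identities. At the end the product has the shape $U\cdot B\cdot L$ with $U$ upper- and $L$ lower-triangular, so its $(2,2)$ entry is simply the product of the $(2,2)$ entries, giving $s_i\cdots s_{j-2}$ directly. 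This approach never needs to evaluate $\epsilon_\sigma$ on a $\Delta_{a,b}$ foreign to $\mathcal{T}_\sigma$, which is exactly where your induction stalls.
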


		Assuming the lemma, we first prove Proposition \ref{prop: monomials}.
		
			\begin{proof}[Proof of Proposition \ref{prop: monomials}]
        We first define a bijection $\varphi$ between the set of triangles in the triangulation $\mathcal{T}_\sigma$ and the local toric coordinates $s_1, \dots, s_{n-1}$ induced by the augmentation $\epsilon_\sigma$. Let $T$ be a triangle in $\mathcal{T}_\sigma$ with sides $D_{i-1, j-1}, D_{j-1, k-1}$ and $D_{i-1, k-1}$. We define the map $\varphi$ by

		$$\varphi(T):=(\Delta_{i, j})^{-1}(\Delta_{j, k})^{-1}\Delta_{i, k}.$$

		where we recall that $\Delta_{i, i+1}=1$ by definition. By Lemma \ref{lemma: Fibonacci}, we have $$(\Delta_{i, j})^{-1}(\Delta_{j, k})^{-1}\Delta_{i, k}= (s_i\dots s_{j-2})^{-1}(s_j\dots s_{k-2})^{-1}s_i\dots s_{k-2}=s_{j-1}.$$To see that $\varphi$ is injective, consider two triangles $T$ and $T'$ belonging to the triangulation $\mathcal{T}_\sigma$ with sides $\{ D_{i-1, j-1}, D_{j-1, k-1},D_{i-1, k-1}\}$ and $\{ D_{i'-1, j'-1}, D_{j'-1, k'-1},D_{i'-1, k'-1}\}$, respectively. Assume that $\varphi(T)=\varphi(T')$. Then $s_{j-1}=s_{j'-1}$, and therefore $j=j'.$ Since $T$ and $T'$ share a middle vertex, and belong to the same triangulation, they must be the same triangle. We can conclude immediately that $\varphi$ is bijective because it is an injective map between two sets of $n-1$ elements. Thus, 
		the set of $\Delta_{i, j}$ functions corresponding to diagonals $\mathcal{T}_\sigma$ form a coordinate basis for the toric chart induced by the augmentation $\epsilon_\sigma$.  
		
	\end{proof}

	We now give a proof of Lemma \ref{lemma: Fibonacci} by carefully examining the effect of the DGA map $\Phi$ on the braid matrices defining $\Delta_{i,j}$.
	
	\begin{proof}[Proof of Lemma \ref{lemma: Fibonacci}]
	
	        Consider $\Delta_{i,j}$ corresponding to some diagonal $D_{i-1, j-1}$ of a triangulation $\mathcal{T}_\sigma$. By definition, we have $$\Phi(\Delta_{i,j})=\left[\prod_{k=i}^{j-2}B(\Phi(z_k))\right]_{(2, 2)}.$$

Therefore, Lemma \ref{lemma: Fibonacci} is equivalent to the claim that the $(2, 2)$ entry of $\prod_{k=i}^{j-2}B(\epsilon_{\sigma}(z_k))$ is precisely $\prod_{k=i}^{j-2} s_k$. To verify this statement, we show inductively that applying $\Phi_l\circ \dots \circ \Phi_1$ yields a product of $B(z_k)$ for $k\in \{i, \dots, j-2\}\backslash \{\sigma(1)\dots, \sigma(l)\}$ with a particular collection of diagonal matrices, upper triangular and lower triangular matrices.

Define the matrices 
$$C(s):=\begin{pmatrix}1 & s\\ 0 & 1\end{pmatrix} \qquad U(s):=\begin{pmatrix} -s^{-1} & 1\\ 0 & s \end{pmatrix} \qquad L(s):=\begin{pmatrix} -s^{-1} & 0\\ 1 & s \end{pmatrix} \qquad D(s):= \begin{pmatrix} -s^{-1} & 0\\ 0 & s \end{pmatrix}$$

Denote by $A^{\sf T}$ the transpose of the matrix $A$. The following identities are immediate. 

\begin{align}
\label{eq 3} B(z\pm s)&=B(z)C(\pm s)\\
\label{eq 4}    B(z\pm s)&=C^{\sf T}(\pm s)B(z)\\
\label{eq 5}    C(s\pm t)&=C(s)C(\pm t)\\
\label{eq 6}    C(-s^{-1})B(s)C^{\sf T}(-s^{-1})&=D(s)\\
\label{eq 7}   C(-s^{-1})B(s)&=L(s)\\
\label{eq 8}    B(s)C^{\sf T}(-s^{-1})&=U(s)
\end{align}

Equipped with this set of identities, we proceed with the proof of Lemma \ref{lemma: Fibonacci}. First, we may assume that $\sigma(1), \dots, \sigma(k)$ all lie in the set $\{i,\dots j-2\}$. Indeed, for $\sigma(l)$ not in $\{i, \dots j-2 \}$, the map $\Phi_l$ is the identity on the polynomial $\Phi_{l-1}\circ \dots \circ \Phi_1(\Delta_{i,j})$. This follows from the observation that if $D_{i-1, j-1}$ is in the triangulation $\mathcal{T}_\sigma$, then $i$ appears before $i-1$ and $j-2$ appears before $j-1$ in $\sigma$ under the clip sequence bijection. 
		Therefore, $\sigma^{-1}(i)<\sigma^{-1}(i-1)$ and $\sigma^{-1}(j-2)<\sigma^{-1}(j-1)$, which implies that no elements of the set $T_\sigma^l$ appear in terms of $\Phi_{l-1}\circ \dots \circ \Phi_1(\Delta_{i,j})$.

Denote by $M_l^+$ and $M_l^-$ the maximum and minimum of the set $\{i, \dots, j-2\}\backslash \{\sigma(1),\dots, \sigma(l-1)\}.$ We claim that the result of applying $\Phi_l$ to $\Phi_{l-1}\circ \dots \circ \Phi_{1}(\Delta_{i, j})$ results in the replacement of $B(z_{\sigma(l)})$ in the product $\prod_{k=i}^{j-2} B(\Phi_{l-1}\circ\dots\circ\Phi_{1}(z_k))$ with one of three possibilities depending on $l$:

\begin{enumerate}
    \item For $\sigma(l)= M_l^-$, the map $\Phi_l$ replaces $B(z_{\sigma(l)})$ by the upper triangular matrix $U(s_{\sigma(l)})$.
    \item For $\sigma(l)= M_l^+$, the map $\Phi_l$ replaces $B(z_{\sigma(l)})$ by the lower triangular matrix $L(s_{\sigma(l)})$.
    \item For $M_l^-<\sigma(l)<M_l^+$, the map $\Phi_l$ replaces $B(z_{\sigma(l)})$ by the diagonal matrix matrix $D(s_{\sigma(l)})$. 
    
\end{enumerate}

We prove this claim by induction. For the base case, we consider the three possibilities listed above. 
\begin{enumerate}
    \item If $\sigma(1)=M_1^-=i$, then we have 
\begin{align*}
    \Phi_1(\Delta_{i, j})&= B(s_i)B(z_{i+1}-s_i^{-1})B(z_{i+2})\dots B(z_{j-2})\\
    &=B(s_i)C^{\sf T}(-s_i^{-1})B(z_{i+1})\dots B(z_{j-2})\\
    &=U(s_i)B(z_{i+1})\dots B(z_{j-2})
\end{align*}
where the second equality follows from Equation (\ref{eq 4}) and the final one from Equation (\ref{eq 8}).\\

    \item If $\sigma(1)=M_1^+=j-2$, then Equations (\ref{eq 3}) and (\ref{eq 7}) imply that $$\Phi_1(\Delta_{i,j})=B(z_i)\dots B(z_{j-3})C(-s_{j-2}^{-1})B(s_{j-2})=B(z_i)\dots B(z_{j-3})L(s_{j-2}).$$ 
    
   \item If $i<\sigma(1)<j-2$, then we apply Equations (\ref{eq 3}), (\ref{eq 4}), and (\ref{eq 6}) to $\Phi_1(\Delta_{i, j})$ to obtain
\begin{align*}
    \Phi_1(\Delta_{i,j})&=B(z_i)\dots B(z_{\sigma(1)-1})C(-s_{\sigma(1)}^{-1})B(s_{\sigma(1)})C^{\sf T}(-s_{\sigma(1)}^{-1})B(z_{\sigma(1)+1})\dots B(z_{j-2})\\
    &=B(z_i)\dots B(z_{\sigma(1)-1})D(s_{\sigma(1)})B(z_{\sigma(1)+1})\dots B(z_{j-2}).
\end{align*} 
\end{enumerate}

Assume inductively that applying the composition $\Phi_{l-1}\circ\dots \circ \Phi_1$ replaces each $B(z_{\sigma(k)})$ for $1\leq k \leq l-1$ with either $U(s_{\sigma(k)})$, $L(s_{\sigma(k)})$, or $D(s_{\sigma(k)})$ depending on whether $\sigma(k)=M_k^-$, $\sigma(k)=M_k^+$, or $M_k^-<\sigma(k)<M_k^+$ respectively. We consider the same three cases for $\Phi_l$:

\begin{enumerate}
    \item If $\sigma(l)=M_l^-$, then $T_\sigma^l$ has a single element $l'$ and by the combinatorial formula for $\Phi_l,$ we have 
    $$B(\Phi(z_{\sigma(l')}))=C^{\sf T}(\pm s_{\sigma(l)}^{-1}s_{\sigma(l)+1}^{-2}\dots s_{\sigma(l')-1}^{-2})B(z_{\sigma(l')})$$ where the sign is given by $(-1)^{|\sigma(l)-(\sigma(l')-1)|}$. 
    By the inductive hypothesis, we have that the matrices appearing between $B(\Phi(z_{\sigma(l)}))=B(s_{\sigma(l)})$ and $C^{\sf T}(s_{\sigma(l)}^{-1}s_{\sigma(l)+1}^{-2}\dots s_{\sigma(l')-1}^{-2})$ are of the form $D(s_{\sigma(l)+1})\dots D(s_{\sigma(l')-1})$. We then compute 
    \begin{align*}
        B(s_{\sigma(l)})\Bigg(\prod_{m=\sigma(l)+1}^{\sigma(l')-1}D(s_m)\Bigg)C^{\sf T}(\pm s_{\sigma(l)}^{-1}s_{\sigma(l)+1}^{-2}\dots s_{\sigma(l')-1}^{-2})&=\begin{pmatrix} \pm s_{\sigma(l)}^{-1}\dots s_{\sigma(l')-1}^{-1} & s_{\sigma(l)+1}\dots s_{\sigma(l')-1}\\ 0 & s_{\sigma(l)}\dots s_{\sigma(l')-1} \end{pmatrix}\\ 
        &= U(s_{\sigma(l)})\Bigg(\prod_{m=\sigma(l)+1}^{\sigma(l')-1}D(s_m)\Bigg),
    \end{align*}
    
    showing that applying $\Phi_l$ replaces $B(z_l)$ by $U(s_l)$.\\

    \item If $\sigma(l)=M_l^+$, then $T_\sigma^l$ again has a single element $l''$ and we have
     \begin{align*}
       C(\pm s_{\sigma(l'')+1}^{-2}\dots s_{\sigma(l)-1}^{-2}s_{\sigma(l)}^{-1}) \Bigg(\prod_{m=\sigma(l'')+1}^{\sigma(l)-1}D(s_m)\Bigg) B(s_{\sigma(l)})&=\begin{pmatrix}\mp( s_{\sigma(l'')+1}\dots s_{\sigma(l)-1})^{-1} & (s_{\sigma(l'')+1}\dots s_{\sigma(l)})^{-1} \\ 0 & (s_{\sigma(l'')+1}\dots s_{\sigma(l)-1})\end{pmatrix} \\ 
       &=\Bigg(\prod_{m=\sigma(l'')+1}^{\sigma(l)-1}D(s_m)\Bigg) L(s_{\sigma(l)}).
    \end{align*}
    
    \item Finally, if $M_l^-<\sigma(l)<M_l^+$, then $T_\sigma^l$ has two elements, denote them by $l'$ and $l''$ with $l'>l''$. Then we must consider the product

     $$ C(\pm s_{\sigma(l'')+1}^{-2}\dots s_{\sigma(l)-1}^{-2}s_{\sigma(l)}^{-1})\Bigg(\prod_{m=\sigma(l'')+1}^{\sigma(l)-1} D(s_{m})\Bigg)B(s_{\sigma(l)})\Bigg(\prod_{m=\sigma(l)+1}^{\sigma(l')-1}D(s_m)\Bigg) C^{\sf T}(\pm s_{\sigma(l)}^{-1}s_{\sigma(l)+1}^{-2}\dots s_{\sigma(l')-1}^{-2})$$
where the two signs of entries of $C$ and $C^{{\sf T}}$ need not agree. We apply our computation from the previous case and simplify
    \begin{align*}
         \Bigg(\prod_{m=\sigma(l'')+1}^{\sigma(l)-1}D(s_m)\Bigg)L(s_{\sigma(l)})\Bigg(\prod_{m=\sigma(l)+1}^{\sigma(l')-1}D(s_m)\Bigg)C^{\sf T}(\pm s_{\sigma(l)}^{-1}s_{\sigma(l)+1}^{-2}\dots s_{\sigma(l')-1}^{-2})\\
         =\Bigg(\prod_{m=\sigma(l'')+1}^{\sigma(l)-1}D(s_m)\Bigg)L(s_{\sigma(l)})\begin{pmatrix}\mp (s_{\sigma(l)+1}\dots s_{\sigma(l')-1})^{-1} & 0\\ \pm (s_{\sigma(l)}s_{\sigma(l)+1}\dots s_{\sigma(l')-1})^{-1} & s_{\sigma(l)+1}\dots s_{\sigma(l')-1}\end{pmatrix}\\
    \end{align*}
     
    This yields the product $\prod_{m=\sigma(l'')+1}^{\sigma(l')-1}D(s_m)$, as desired.

\end{enumerate}

Thus, by induction, $\Phi_l$ replaces $B(z_l)$ with an upper triangular, lower triangular, or diagonal matrix for $1< l < j-2-i$. Therefore, when we arrive at the final $\Phi_{j-2-i}$ map, we have $B_{s_{\sigma(j-2-i}})$ multiplied on the left by the product of some number of upper triangular and diagonal matrices with the corresponding $s$ variable appearing in the $(2, 2)$ entry and multiplied on the right by some number of diagonal and lower triangular matrices with the same condition. Since multiplication by a diagonal matrix preserves the property of being upper or lower triangular, the result is a product of the form $UB(s_{\sigma(j-2-i)})L$ where $U$ and $L$ are upper and lower triangular matrices. Therefore the $(2, 2)$ entry of this product is the product of the $(2, 2)$ entries of each of the factors. It follows that the $(2, 2)$ entry of $\Phi(\Delta_{i, j})$ is $s_i\dots s_{j-2}$, as desired.
\end{proof}

Together, Corollary \ref{cor Aug rotation} and Proposition \ref{prop: monomials} allow us to give an algebraic proof of Theorem \ref{thm: orbit size} (1)

\begin{proof}[Proof of Theorem \ref{thm: orbit size} (1)]
    By Proposition \ref{prop: monomials}, the set of $\Delta_{i, j}$ corresponding to a triangulation $\mathcal{T}_\sigma$ gives a basis for the toric chart induced by the augmentation $\epsilon_\sigma$. Therefore, the image of $\epsilon_\sigma$ under $\vartheta$ corresponds to the image of the set of $\Delta_{i, j}$ corresponding to $\mathcal{T}_\sigma$. By Corollary \ref{cor Aug rotation}, we know that the induced action of the K\'alm\'an loop on the augmentation variety of $\la(A_{n-1})$ is equivalent to the action of rotation on the $n+2$-gon. Therefore, the orbital structure of the action can be described as in Subsection \ref{sub: Orbital structure}. \end{proof}

	\subsection{Relation to cluster theory} \label{sub:clusters}

		The appearance of the $\Delta_{i,j}$ functions and the combinatorics of the $(n+2)$-gon is explained by a cluster structure on the augmentation variety, the existence of which was recently proven by Gao-Shen-Weng in \cite{GSW}. In brief, a cluster variety is an algebraic variety containing a set of toric charts (cluster charts) with coordinate functions (cluster variables) that transform according to a specific operation (cluster mutation) under the chart maps. See \cite{FWZ1, FWZ2} for more on cluster algebras. 

		For a Legendrian $\la$ given as the rainbow closure of a positive braid, \cite{GSW} describes a cluster structure on $\Aug(\la)$ 
		by proving a natural isomorphism to double Bott-Samelson cells. In particular, the cluster structure on $\Aug(\la(A_{n-1}))$ is a cluster algebra of $A$-type. $A$-type cluster algebras were originally defined and studied by Fomin and Zelevinksy in the context of regular functions on the affine cone of the Grassmanian $\mathcal{G}r^\times(2, n+2)$ \cite{FominZelevinsky_ClusterII}. If we consider the Pl\"ucker coordinate $P_{i,j}$ of the (ordinary) Grassmanian $Gr(2, n+2)$, then its image in the affine cone is precisely the function $\Delta_{i,j}$. The combinatorics of the relationship between cluster charts is captured by the flip graph, where a single cluster seed is given by all $\Delta_{i,j}$ corresponding to diagonals $D_{i, j}$ of a triangulation. In the context of this manuscript, \cite{GSW} implies the existence of cluster coordinates on $\Aug(\la(A_{n-1}))$ while Proposition \ref{prop: monomials} gives a precise formula.


    Also of interest in the cluster setting is the fact that the K\'alm\'an loop induces a cluster automorphism of the augmentation variety $\Aug(\la(A_{n-1}))$. Subsection \ref{sub:rotation} explicitly realizes this automorphism as a sequence of mutations. For an $A$-type cluster algebra, Assem, Schiffler, and Shramchenko showed that the cluster automorphism group is $\Z_{n+2}$ \cite{AssemSchifflerShramchenko}. Theorem \ref{thm: signs} implies that the order of the K\'alm\'an loop action on $\Aug(\la(A_{n-1}))$ is precisely $n+2$, so we immediately deduce the following corollary.
	\begin{corollary}
		The induced action of the K\'alm\'an loop on $\Aug(\la(A_{n-1}))$ is a generator of the $A$-type cluster modular group. 
	\end{corollary}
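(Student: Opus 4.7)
The plan is a short two-step argument combining the classification result of Assem, Schiffler, and Shramchenko with the order computation for $\vartheta$ that follows from Theorem \ref{thm: signs}. The cluster-theoretic input identifies the cluster modular group of the $A$-type cluster algebra of rank $n-1$ with $\Z_{n+2}$, and the remaining work is to show that the induced automorphism $\vartheta$ of $\Aug(\la(A_{n-1}))$ has order exactly $n+2$ inside this cyclic group; an element of $\Z_{n+2}$ is a generator precisely when its order is $n+2$.

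The first step is to place $\vartheta$ inside the cluster modular group. This is exactly the statement that $\vartheta$ is a cluster automorphism of $\Aug(\la(A_{n-1}))$, which will be established by the explicit realization of $\vartheta$ as a sequence of cluster mutations to be given in Subsection \ref{sub:rotation}. Combined with the Assem--Schiffler--Shramchenko identification, this places $\vartheta$ as some element $m \in \Z_{n+2}$, and it remains to determine the order of $m$.

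The second step is to verify that $m$ has order exactly $n+2$. By Corollary \ref{cor Aug rotation}, the action of $\vartheta$ on the global regular functions is given by $\vartheta(\Delta_{i,j}) = (-1)^n \Delta_{i-1, j-1}$ with indices taken modulo $n+2$, which under the correspondence $\Delta_{i,j} \leftrightarrow D_{i-1, j-1}$ is precisely the $\Z_{n+2}$-action of counterclockwise rotation on diagonals of the $(n+2)$-gon. Proposition \ref{prop: monomials} ensures that for any 312-avoiding permutation $\sigma$, the functions $\Delta_{i,j}$ indexed by diagonals of $\mathcal{T}_\sigma$ form a coordinate basis on the toric chart induced by $\epsilon_\sigma$. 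Therefore, if $\vartheta^k$ were the identity automorphism of $\Aug(\la(A_{n-1}))$ for some $0 < k < n+2$, then rotation by $\tfrac{2\pi k}{n+2}$ would necessarily fix every diagonal of some triangulation, which is impossible. Hence the order of $\vartheta$ is exactly $n+2$, and the corollary follows immediately from the fact that only generators of a cyclic group attain its full order.

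The one technical point I would watch is the global sign $(-1)^n$ in Corollary \ref{cor Aug rotation}: naively, when $n$ is odd, the $(n+2)$-fold iterate acts on each $\Delta_{i,j}$ by the scalar $(-1)^{n(n+2)} = -1$, which could appear to contradict $\vartheta^{n+2} = \op{id}$. The right way to handle this is to work at the level of the variety $\Aug(\la(A_{n-1}))$ rather than its ambient coordinate ring: the sign is absorbed when restricting to the cut $\{X_n = 0\}$ and passing to $\Spec$, as can be verified on any cluster of $\Delta_{i,j}$ indexed by a triangulation. With this bookkeeping in place, no additional argument is required beyond the order count above.
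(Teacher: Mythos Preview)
Your proposal is correct and follows essentially the same route as the paper: the paper's argument (given in the paragraph preceding the corollary) is precisely that the Assem--Schiffler--Shramchenko result identifies the $A$-type cluster modular group with $\Z_{n+2}$, and that Theorem~\ref{thm: signs} forces $\vartheta$ to have order exactly $n+2$, whence it generates. Your use of Corollary~\ref{cor Aug rotation} and Proposition~\ref{prop: monomials} to pin down the order is a mild elaboration of what the paper compresses into the single clause ``Theorem~\ref{thm: signs} implies that the order \ldots is precisely $n+2$,'' and your flagging of the global sign $(-1)^n$ is a reasonable bit of care that the paper leaves implicit.
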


	\section{Combinatorial Characterizations}\label{section:combinatorics}

	In this section, we describe the combinatorial properties of the K\'alm\'an loop action on a pinching sequence filling $L_\sigma$ of $\la(A_{n-1})$ purely in terms of the corresponding 312-avoiding permutation $\sigma$. We first present an explicit algorithm for determining the orbit size of $L_\sigma$ from $\sigma$ in Subsection \ref{sub:orbits}. The end of the subsection includes a table where orbit sizes are computed for the case $n=4$, corresponding to triangulations of the hexagon. We then give a recipe for constructing a geodesic path in the flip graph that describes a counterclockwise rotation of the triangulation $\mathcal{T}_\sigma$. Since the weave filling $L_{\mathcal{T}_\sigma}$ is Hamiltonian isotopic to the pinching sequence filling $L_\sigma$ by Theorem \ref{thm: isotopy1}, this geodesic path describes the K\'alm\'an loop action on $L_\sigma$ as a sequence of edge flips. 
	Finally, we discuss the behavior of 312-avoiding permutations under a single edge flip in the flip graph. Together, these last two results give a combinatorial characterization of the K\'alm\'an loop action on fillings purely in terms of 312-avoiding permutations. 
	As in previous sections, all indices are computed modulo $n+2$.

		\subsection{Orbit size}\label{sub:orbits}
	To prove Theorem \ref{thm: algorithm}, we give explicit criteria in Lemmas \ref{lemma: Algorithm1} and \ref{lemma: Algorithm2} for when a filling of $\la(A_{n-1})$ has orbit size $\frac{n+2}{2}$ or $\frac{n+2}{3}$ under the action of the K\'alm\'an loop. If it does not satisfy either of these criteria, then it necessarily has orbit size $n+2$. We start by describing the permutations that arise from an orbit of size $\frac{n+2}{2}$.

	Consider some 312-avoiding permutation $\sigma\in S_{n}$. In order for the filling $L_\sigma$ to have orbit size $\frac{n+2}{2},$ the triangulation $\mathcal{T}_\sigma$ must have rotational symmetry through an angle of $\pi$. Therefore, $\mathcal{T}_\sigma$ has a diameter $D_{i, i+\frac{n+2}{2}}$ and the triangulated polygons on either side of this diameter must be mirror images. We consider the diameter as an external edge of two $(\frac{n+2}{2}+1)$-gons, one containing both vertices labeled $n+1$ and $n+2$, and the other containing at most one of them. For any 312-avoiding permutation $\sigma$ such that $\mathcal{T}_\sigma$ has a diameter $D_{i,i+\frac{n+2}{2}}$, we define the 312-avoiding permutation $\tau$ in $S_{\frac{n+2}{2}}$ corresponding to half of the triangulation $\mathcal{T}_\sigma$ as follows.

	\begin{definition}
	  The permutation $\tau$ in the letters $i+1, i+1, \dots, i+\frac{n+2}{2}-1$ is the 312-avoiding permutation obtained from applying the clip sequence bijection to the triangulation $\mathcal{T}_\tau$ of the $(\frac{n+2}{2}+1)$-gon containing at most one of the vertices labeled by $n+1$ and $n+2$.
	\end{definition}
	
	We can always unambiguously identify $\tau$ from the permutation $\sigma$.
\begin{lemma}\label{lemma tau}
Let $\sigma\in S_n$ be any 312-avoiding permutation such that $\mathcal{T}_\sigma$ has rotational symmetry through an angle of $\pi$. The permutation $\tau$ is the first subword of $\sigma$ of length $\frac{n}{2}$ letters forming a subinterval of the integers $\{1, \dots, n\}$. 
\end{lemma}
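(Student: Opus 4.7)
The plan is to analyze the clip sequence that defines $\sigma$ and show $\sigma$ factors, as a word, into three consecutive blocks whose middle block is $\tau$. Let $D_{i,\,i+\frac{n+2}{2}}$ be the diameter realizing the $\pi$-rotational symmetry of $\mathcal{T}_\sigma$, let $H_1$ denote the half-triangulation containing at most one of the vertices labeled $n+1, n+2$, and write $V_1 := \{i+1,\ldots,i+\frac{n+2}{2}-1\}$ for its interior vertices; by the hypothesis on $H_1$, the set $V_1$ is a subinterval of $\{1,\ldots,n\}$ of size exactly $\frac{n}{2}$, and every label in $V_1$ lies strictly between $i$ and $i+\frac{n+2}{2}$.

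The first step is the geometric observation that, while the diameter $D_{i,\,i+\frac{n+2}{2}}$ is still present in the triangulation, neither of its endpoints can be the middle of an ear. Indeed, a diameter endpoint $v$ being such a middle would require the short diagonal between the current neighbors of $v$, which always has one endpoint in $H_1$ and one in $H_2$, to lie in the triangulation; but such a diagonal interleaves its endpoints with the diameter's endpoints around the polygon, forcing it to cross the diameter, which is impossible in any triangulation. Thus, as long as both diameter endpoints remain unclipped, they stay out of the clip sequence. The second step is a label comparison that organizes $\sigma$ into three consecutive phases via the rule ``clip the smallest middle of ear'': Phase I, an initial (possibly empty) block of $H_2$-interior clips with labels $<i$; Phase II, a block clipping exactly the vertices of $V_1$; and Phase III, a final block containing the diameter endpoints and the remaining $H_2$-interior labels $>i+\frac{n+2}{2}$. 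Because the diameter separates the two halves, clippings on one side do not affect the ear structure on the other, so once every currently available $H_2$-small middle is exhausted, $V_1$'s labels beat anything else on offer until $V_1$ is empty. Moreover, Phase II's internal order coincides with $\tau$, because every $v \in V_1$ has the same two polygon-neighbors in the current full polygon as in $H_1$'s half-polygon (with the diameter promoted to a polygon edge), so the middle-of-ear computations driving Phase II match those defining $\tau$ step by step.

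Finally, I will verify that Phase II is the \emph{first} length-$\frac{n}{2}$ window of $\sigma$ whose letters form a subinterval of $\{1,\ldots,n\}$. The set $V_1$ is itself a subinterval, so Phase II is one such window. The constraint that $H_1$ is the special half forces $i \le \frac{n}{2}$ (with an analogous bound in the case that one diameter endpoint equals $n+1$ or $n+2$), giving $|\text{Phase I}|\le i-1 < \frac{n}{2}$, so no length-$\frac{n}{2}$ window fits entirely inside Phase I. Any window straddling Phase I and Phase II contains some label $<i$ together with at least one label in $V_1$ (which is $>i$) yet omits $i$ itself, hence its letters skip the gap at $i$ and cannot form a consecutive integer run. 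Thus $\tau$ is precisely the leftmost subinterval window, as claimed. The main obstacle I expect is the careful verification of the phase structure: one must rule out both that a $V_1$-middle ever beats an available $H_2$-small-middle during Phase I (impossible, since every $V_1$-label is strictly larger than $i-1$) and that any $H_2$-middle or diameter endpoint can sneak into Phase II (the large-label $H_2$-middles lose to any remaining $V_1$-middle, while the diameter endpoints are excluded by the non-crossing argument above).
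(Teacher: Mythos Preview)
Your proof is correct and follows essentially the same approach as the paper's: both arguments establish the three-block structure of $\sigma$ (letters $<i$ preceding the $V_1$-block, then the remainder) and conclude that the $V_1$-block is the earliest length-$\tfrac{n}{2}$ window forming a subinterval. The paper's proof is a brief sketch that simply asserts ``any letters appearing before $\tau$ will be less than $i$''; your phase analysis supplies the missing justification for this assertion and for the contiguity of Phase~II.

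One simplification: your non-crossing argument for why diameter endpoints are never ear middles during Phases~I and~II is more elaborate than needed. It suffices to observe directly that the diameter $D_{i,\,i+\frac{n+2}{2}}$ is itself a diagonal incident to both endpoints, and it remains a diagonal of the current polygon as long as both $V_1$ and $V_2$ are nonempty---which holds throughout Phases~I and~II since $V_2$ always contains at least one of the labels $n+1,n+2$ (never clipped until the very end). This replaces the geometric crossing check with a one-line incidence observation.
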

	
	\begin{proof}
	    Let $\sigma\in S_n$ correspond to a triangulation $\mathcal{T}_\sigma$ with rotational symmetry through an angle of $\pi$. Under the clip sequence bijection, there may be letters of $\sigma$ that appear before $\tau$ 
	    Therefore, to identify $\tau$ as a subword of $\sigma$ we search for the first 312-avoiding permutation of length $\frac{n}{2}$ that appears in $\sigma$. A diameter $D_{i, i+\frac{n+2}{2}}$ forces the condition that any letters appearing before $\tau$ will be less than $i$, so that even if $i$ appears directly after $\tau$, there is no ambiguity in identifying $\tau$.

    Explicitly, we identify $\tau$ by first checking if the set $\{\sigma(1), \dots \sigma(\frac{n}{2})\}$ of the first $\frac{n}{2}$ letters of $\sigma$ is equal to a subinterval of the integers $\{1, \dots, n\}$ of length $\frac{n}{2}$. If not, we check the $\{\sigma(2), \dots, \sigma(\frac{n}{2}+1)\}$. We continue in this way until we have either identified the subword $\tau$ or exhausted all possibilities. If no such subword exists, then $\sigma$ does not have the assumed rotational symmetry.
	\end{proof}


    We now state a preparatory lemma regarding details of the clip sequence bijection that may give some insight into the structure of the orbit size algorithm below. We consider the most general case where $\mathcal{T}_\tau$ is a subtriangulation of $\mathcal{T}_\sigma$ with vertices $i, \dots, i+k$ for $i+k\leq n+1$.

    \begin{lemma}\label{lemma: maxtriangle}
    Let $k\in \N$ satisfy $j< i+k \leq n+1$. The 312-avoiding permutation $\tau$ ends in the letter $j$ if and only if the subtriangulation $\mathcal{T}_\tau$ contains the triangle labeled by vertices $i, j,$ and $i+k$. 
    In this case, all letters taking values strictly between $i$ and $j$ appear before any other letters in $\tau$.
    \end{lemma}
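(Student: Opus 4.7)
The plan is to work directly with the clip sequence applied to the sub-polygon on vertices $i, i+1, \ldots, i+k$, treating $i$ and $i+k$ as fixed endpoints so that $\tau$ permutes the $k-1$ interior labels $\{i+1, \ldots, i+k-1\}$. The key structural fact I will use throughout is that a single clip operation deletes exactly the triangle containing the removed middle vertex and adds no new triangles; consequently, any triangle present in the residual triangulation at any intermediate stage must have been present in $\mathcal{T}_\tau$ to begin with.

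For the forward direction, suppose $\tau$ ends in $j$. After the first $k-2$ clips every interior vertex except $j$ has been removed, so the reduced polygon has exactly the three vertices $i, j, i+k$ bounding a single triangle $(i, j, i+k)$ in the residual triangulation. By the preceding observation, this triangle belonged to $\mathcal{T}_\tau$. Conversely, suppose $(i, j, i+k) \in \mathcal{T}_\tau$. Then the diagonals $(i, j)$ and $(j, i+k)$ split $\mathcal{T}_\tau$ into a left sub-triangulation $\mathcal{T}_L$ on $\{i, \ldots, j\}$ and a right sub-triangulation $\mathcal{T}_R$ on $\{j, \ldots, i+k\}$. Since the middle vertex of an ear has no incident diagonal, $j$ cannot become a candidate for clipping until both $\mathcal{T}_L$ and $\mathcal{T}_R$ have been reduced to the single edges $(i, j)$ and $(j, i+k)$; equivalently, until every interior vertex of both sides has already been clipped. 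Hence $j$ is clipped last, and $\tau$ ends in $j$. For the prefix statement, the minimum-label rule combined with the two-ears lemma ensures that at every stage prior to $\mathcal{T}_L$ being exhausted there is a clippable middle vertex in $\{i+1, \ldots, j-1\}$: at most one of the two ears guaranteed by the lemma can be centered at the fixed endpoint $i$, so at least one middle vertex with a label in $\{i+1, \ldots, j-1\}$ is always available. Since such labels are strictly smaller than any element of $\{j+1, \ldots, i+k-1\}$, all interior vertices of $\mathcal{T}_L$ are clipped before any interior vertex of $\mathcal{T}_R$.

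The main obstacle I foresee lies in the bookkeeping around the boundary convention, in particular confirming that the two-ears supply in $\mathcal{T}_L$ always delivers at least one middle vertex not equal to $i$ or $j$. The generic case in which $\mathcal{T}_L$ has at least four vertices is handled directly by the classical two-ears lemma; the degenerate cases $j = i+1$ (so $\mathcal{T}_L$ is a single edge and the prefix statement is vacuous) and $\mathcal{T}_L$ a single triangle (so its unique interior vertex is immediately clipped) are handled by inspection. An entirely parallel argument then disposes of $\mathcal{T}_R$ after $\mathcal{T}_L$ is exhausted, and together these observations combine to yield the full statement of the lemma.
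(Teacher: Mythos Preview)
Your proof is correct and takes essentially the same route as the paper's: both arguments rest on the observation that each clip deletes a triangle already present in $\mathcal{T}_\tau$ (so the terminal triangle $(i,j,i+k)$ was there from the start), together with the existence of an ear in the left subtriangulation $\mathcal{T}_L$ whose middle label is smaller than any label in $\mathcal{T}_R$. The one point to tighten is that when you invoke the two-ears lemma for $\mathcal{T}_L$ you must rule out an ear centered at $j$ as well as at $i$; since $i$ and $j$ are adjacent along the boundary edge $(i,j)$ of $\mathcal{T}_L$, ears at both would force crossing diagonals, so at most one of the two can be an ear and your conclusion stands.
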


    \begin{proof}

        The first claim follows from the definition of the clip sequence bijection because the diagonal $D_{i, i+k}$ must appear in the final triangle remaining after removing the previous $n-1$ vertices. Therefore, $j$ is the final letter of $\tau$, if and only if it is also the third vertex of this triangle. 
        
        The second claim follows by similar reasoning to the case of the diameter, as the existence of the diagonal $D_{i, j}$ implies that there must be some ear $D_{l, l+2}$ with $i<l< j-2$. Therefore, $l+1$ appears before $j$ and we can repeat this argument for the subtriangulation of $\mathcal{T}_\tau$ obtained by removing the vertex $l+1$.
    \end{proof}
    
	
	We now give explicit criteria for determining whether the filling $L_\sigma$ has orbit size $\frac{n+2}{2}$ solely in terms of $\sigma$.

	 \begin{lemma}\label{lemma: Algorithm1}

	 The following algorithm detects whether a 312-avoiding permutation $\sigma$ in $S_n$ yields a filling $L_\sigma$ of orbit size $\frac{n+2}{2}$ under the action of the K\'alm\'an loop.
	 \end{lemma}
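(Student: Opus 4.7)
The plan is to design and verify an algorithm that attempts to locate the half-permutation $\tau$ guaranteed by Lemma \ref{lemma tau} and then tests whether the data of $\sigma$ is consistent with a triangulation $\mathcal{T}_\sigma$ whose reflection across a diameter is itself. Concretely, I would describe the algorithm as follows. First, scan $\sigma$ from left to right looking for the first window of $n/2$ consecutive positions whose values form a subinterval $\{i+1,\dots,i+n/2\}\subseteq\{1,\dots,n\}$; if no such window exists, declare orbit size $\neq (n+2)/2$ and terminate. Otherwise, record this subword as the candidate $\tau$ and the candidate diameter as $D_{i,i+(n+2)/2}$. Second, verify that every letter of $\sigma$ appearing before $\tau$ is strictly less than $i+1$ (this is forced by Lemma \ref{lemma: maxtriangle}, since those letters correspond to ears of $\mathcal{T}_\sigma$ that must be clipped from the half-polygon not containing $\tau$). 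Third, delete $\tau$ from $\sigma$, shift the resulting letters by the involution $k\mapsto 2i+1+(n+2)/2-k$ (modulo $n+2$), and check that this shifted word is a 312-avoiding permutation of $\{i+1,\dots,i+n/2\}$ equal to $\tau$ itself; this is exactly the combinatorial translation of the statement that the second half of the triangulation is the mirror image of the first.

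The key step to verify is that these three checks are jointly necessary and sufficient. Sufficiency is the easy direction: if all three tests pass, then $\tau$ determines $\mathcal{T}_\tau$ on the $(n/2+1)$-gon cut off by $D_{i,i+(n+2)/2}$, the reflection condition forces the other half to be the mirror of $\mathcal{T}_\tau$, and clipping vertices in the order prescribed by $\sigma$ recovers precisely this symmetric triangulation, so $\mathcal{T}_\sigma$ has a $\pi$-rotational symmetry and therefore $L_\sigma$ has orbit size $(n+2)/2$ by Theorem \ref{thm: orbit size} (1). For necessity, suppose $\mathcal{T}_\sigma$ has such a symmetry with diameter $D_{i,i+(n+2)/2}$. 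Lemma \ref{lemma tau} identifies $\tau$ as a subword of the required form, Lemma \ref{lemma: maxtriangle} forces the letters preceding $\tau$ to lie in $\{1,\dots,i\}$, and the clip sequence bijection applied to the mirror half produces precisely the shifted word described in step three.

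The main obstacle I anticipate is carefully controlling the bookkeeping in step three: specifically, the letters of $\sigma$ appearing after $\tau$ are not just a clipping sequence of the mirror $(n/2+1)$-gon in isolation, because the clip order for $\sigma$ interleaves with earlier removals that affected the global polygon labeling. I would handle this by arguing, again via Lemma \ref{lemma: maxtriangle}, that once $\tau$ has been read off, the only remaining vertices to be clipped are those of the mirror half plus possibly the two endpoints $i$ and $i+(n+2)/2$ of the diameter, and that their clipping order in $\sigma$ is forced (up to the 312-avoiding representative) to match the clip sequence of the mirror triangulation under the reflection $k\mapsto 2i+1+(n+2)/2-k$.

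Finally, the complexity claim of $O(n^2)$ in Theorem \ref{thm: algorithm} follows because the scan in step one is linear in $n$ with an $O(n)$ interval check at each window, step two is a single linear pass, and the reconstruction/comparison in step three is again linear after an $O(n)$ relabeling. I would conclude by illustrating the algorithm on a small example (say $n=4$ or $n=6$) and tabulate orbit sizes, preparing the ground for the analogous Lemma for $(n+2)/3$ orbits that follows.
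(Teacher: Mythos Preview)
Your approach has a genuine gap, and it shows up already in the paper's own example $\sigma = 1\,5\,4\,3\,6\,2$ with $n=6$, $i=2$, $\tau = 5\,4\,3$. Deleting $\tau$ leaves the word $1\,6\,2$; applying your involution $k\mapsto 2i+1+(n+2)/2-k = 9-k$ (mod $8$) gives $8\,3\,7$, which is not $\tau$. Your algorithm would therefore reject $\sigma$, but this filling \emph{does} have orbit size $(n+2)/2 = 4$.

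There are two intertwined problems. First, the relevant symmetry is \emph{rotation} by $\pi$, i.e.\ $v\mapsto v+(n+2)/2$, not a reflection; the paper's phrase ``mirror images'' is informal, and your map $k\mapsto 2i+1+(n+2)/2-k$ tests for the wrong symmetry. Second, and more fundamentally, even the correct shift $v\mapsto v+(n+2)/2$ does not carry the non-$\tau$ letters of $\sigma$ to the letters of $\tau$. The interior vertices of the second half-polygon are $\{i+(n+2)/2+1,\dots,n+2,1,\dots,i-1\}$, but the vertices $n+1$ and $n+2$ are \emph{never} recorded in a clip sequence, while the diameter endpoints $i$ and $i+(n+2)/2$ \emph{are}. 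So the letters of $\sigma\setminus\tau$ form a set that is not related to the letters of $\tau$ by any global relabeling, and their order in $\sigma$ is not a relabeled copy of $\tau$ either. Your anticipated fix (invoking Lemma \ref{lemma: maxtriangle} to handle the diameter endpoints) does not repair this, because the obstruction is the special role of $n+1,n+2$, not just of $i$ and $i+(n+2)/2$.

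This is exactly why the paper's algorithm is more elaborate: rather than relabeling and comparing, it reconstructs the full permutation $\sigma'$ from $\tau$ letter by letter. Step (2) recovers the letters preceding $\tau$ by detecting, inside $\tau$ itself, which triangles of $\mathcal{T}_\tau$ rotate to ears lying in the second half with small labels. Steps (3)--(5) then rebuild the suffix after $\tau$ by repeatedly reading off from the tail of $\tau$ which diameter-adjacent vertex becomes clippable next, again using Lemma \ref{lemma: maxtriangle}. The comparison $\sigma=\sigma'$ at the end is what replaces your single relabeling check. If you want to salvage your approach, you would need to build this reconstruction into your ``step three'' rather than a one-shot involution.
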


	\begin{enumerate}

	\item Identify $\tau$ from $\sigma$ as in the proof of lemma \ref{lemma tau}.
	
	\item Define $\sigma'$ to be an empty string and set $\tau'=\tau$. Find the smallest  $j$ for which $j\geq \frac{n+2}{2}$ and some $k>j$ appears before $j$ in $\tau.$ For the first such $k$ appearing in $\tau'$, append $k-\frac{n+2}{2}$ to $\sigma'$, remove $k$ from $\tau'$ and repeat until no such letters remain in $\tau'$. Append $\tau$ to $\sigma'$.
		
		\item While $\tau'$ ends in the largest (resp. smallest) number  remaining in $\tau'$ not equal to $\frac{n+2}{2}-1$ (resp. $\frac{n+2}{2}$), then append the next largest (resp. next smallest) number in $\{1,\dots,  n\}\backslash \sigma'$ less than the smallest number (resp. greater than the largest number) 
		of $\tau'$ to $\sigma'$ and delete the final number of $\tau'$.  
		\item If $\tau'$ does not end in its largest or smallest remaining number, then add $\frac{n+2}{2}$ to all numbers less than the final number and append to $\sigma'$ in the order they appear. Delete the corresponding numbers from $\tau'.$
		\item Now $\tau'$ ends in its smallest remaining number, so return to Step (3) and repeat until only one number remains in $\tau'$. The final number of $\sigma'$ is then determined by the unique number remaining in $\{1, \dots, n\}\backslash\sigma'$.
		\item $\sigma$ has orbit size $\frac{n+2}{2}$ if it is equal to $\sigma'$.
	\end{enumerate}

	\begin{ex}
Consider the 312-avoiding permutation $\sigma = 1\, 5\, 4\, 3\, 6\, 2.$ We can identify $\tau=5\, 4\, 3$ as the first length 3 subword appearing in $\sigma$ and the diameter of the triangulation $\mathcal{T}_\sigma$ is therefore $D_{2, 6}$. Applying the above algorithm to $\tau$, we see that Step (2) yields $\sigma'=1\, 5\, 4\, 3$ because 5 precedes 4. Then 3 is the smallest number appearing in $\tau$, so we append 6 to $\sigma'$. Finally, we append 2, to get $\sigma=\sigma'$, indicating that the filling labeled by $\sigma$ has orbit size 4 under the K\'alm\'an loop. 
\hfill $\Box$
	\end{ex}

\begin{proof}

	Let $\sigma\in S_n$ be a 312-avoiding permutation with orbit size $\frac{n+2}{2}$. Denote the diameter of $\mathcal{T}_\sigma$ as $D_{i, i+\frac{n+2}{2}}$ for some $1\leq i\leq \frac{n+2}{2}-1$ and the permutation corresponding to the triangulation of the $(\frac{n+2}{2}+1)$-gon given by the vertices $i, \dots, i+\frac{n+2}{2}$ by $\tau.$  We will show that the algorithm detects when  the triangulation $\mathcal{T}_\sigma$ is obtained from the triangulation $\mathcal{T}_\tau$ by gluing $\mathcal{T}_\tau$ to a rotation of $\mathcal{T}_\tau$ by $\pi$ along the diagonal $D_{i, i+\frac{n+2}{2}}$. The lemma then follows from the observation that $\tau$ is uniquely determined from $\sigma$.

		Under the clip sequence bijection, we delete the smallest vertex with no incident diagonal at each step and append the label to the permutation. Therefore, any letter $k$ of $\sigma$ appearing before $\tau$ is less than $i$. Moreover, 
		any diagonal $D_{j, k}$ or $D_{k, j}$ (should it exist) incident to the vertex $k$  has endpoint $j$ in the set $\{n+2, 1, \dots, i\}$. Therefore any triangle with vertices $j, k, $and $l$ with $j, k, l$ given in clockwise order must also have $j, l \in \{n+2, 1, \dots, i\}$. The rotational symmetry of $\mathcal{T}_\sigma$ implies that the triangle with vertices $j+\frac{n+2}{2}, k+\frac{n+2}{2}, l+\frac{n+2}{2}$ appears in $\mathcal{T}_\tau$. It follows from Lemma \ref{lemma: maxtriangle} that in $\tau$ the letter $k+\frac{n+2}{2}$ precedes $j$ for some $j\geq \frac{n+2}{2}$ and that all such $k$ appear before $\tau$ in $\sigma$. Therefore, Step (2) produces all letters of $\sigma$ that appear before $\tau$.
		

  To determine the letters following $\tau$ in $\sigma,$ we first consider the case where one of the diameter vertices, $i$ or $i+\frac{n+2}{2}$, has no incident diagonals with endpoint taking values in the set of vertices labeled by letters appearing after $\tau$ in $\sigma$. 
  If this is the case, then the appropriate diameter vertex label immediately follows $\tau$ in $\sigma$ under the clip sequence bijection. We also observe that when $i$ (respectively, $i+\frac{n+2}{2})$ is such a vertex, then there is a triangle in $\mathcal{T}_\sigma$ with vertices $i, i-1,$ and $i+\frac{n+2}{2}$  (resp. $i, i+\frac{n+2}{2}, i+\frac{n+2}{2}+1)$. Therefore, the rotational symmetry of $\mathcal{T}_\sigma$ implies that we have a triangle with vertices $i, i+\frac{n+2}{2}-1$ and $i+\frac{n+2}{2}$ (resp. $i, i+1$, and $i+\frac{n+2}{2}$) in $\mathcal{T}_\tau$. By Lemma \ref{lemma: maxtriangle}, the vertex $i+\frac{n+2}{2}-1$ (resp. $i+1$) appears as the final letter in $\tau$. The vertex $i-1$ (resp. $i+\frac{n+2}{2}+1)$ then appears immediately following $\tau$. The same reasoning applies if we replace the diameter $D_{i, i+\frac{n+2}{2}}$ with the diagonal $D_{i-1, i+\frac{n+2}{2}}$, $D_{i, i+\frac{n+2}{2}+1}$, or any such longest remaining diagonal arising under the clip sequence bijection in this way, so long as $n+1$ or $n+2$ do not appear as endpoints of this diagonal. 

  If both diameter vertices have diagonals incident to them with endpoints in the remaining vertices, then the letter following $\tau$ under the clip sequence bijection labels the smallest vertex greater than $i+\frac{n+2}{2}$ with no incident diagonals. By previous reasoning, we know that the diameter is one side of a triangle with vertices $i, k, i+\frac{n+2}{2}$ in $\mathcal{T}_\sigma$. The rotational symmetry of $\mathcal{T}_\sigma$ implies that the triangle labeled by $i, k-\frac{n+2}{2}, i+\frac{n+2}{2}$ appears in $\mathcal{T}_\tau$. It follows from Lemma \ref{lemma: maxtriangle} that $k$ appears as the final letter of $\tau$ and any letter $j$ with $j<k$ appearing before $k$ in $\tau$

  
  	
	This process continues until we have eliminated all numbers from $\tau$ except for either $n+1$ or $n+2$. This unambiguously determines the final number of our permutation. By construction, we have shown that the above algorithm yields the 312-avoiding permutation $\sigma$ with $\mathcal{T}_\sigma$ constructed by gluing a rotated copy of $\mathcal{T}_\tau$ to $\mathcal{T}_\tau$. \end{proof}
	

	We now consider the case of a 312-avoiding permutation $\sigma$ with orbit size $\frac{n+2}{3}$. In order to exhibit the appropriate rotational symmetry, the triangulation $\mathcal{T}_\sigma$ must have a central triangle labeled by vertices $i, i+\frac{n+2}{3}, i+\frac{2(n+2)}{3}$, dividing the triangulation up into three identical triangulations of $(\frac{n+2}{3}+1)-$gons. Two of these polygons do not contain the pair of vertices $n+1$ and $n+2$, so a permutation $\sigma$ with $\mathcal{T}_\sigma$ having rotational symmetry through an angle of $\frac{2\pi}{3}$ must have two subwords $\tau_1$ and $\tau_2$ of length $\frac{n+2}{3}-1$ that differ by $\frac{n+2}{3}$ and are immediately followed by $i+\frac{n+2}{3}$. We determine the third subword from $\tau_1$ using the same reasoning as in the $\frac{n+2}{2}$ orbit size case. 
	
	\begin{lemma}\label{lemma: Algorithm2}
	The following algorithm detects whether a 312-avoiding permutation $\sigma$ in $S_n$ yields a filling $L_\sigma$ of orbit size $\frac{n+2}{3}$ under the action of the K\'alm\'an loop. 

	 \end{lemma}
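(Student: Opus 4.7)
The plan is to adapt the strategy used in Lemma \ref{lemma: Algorithm1} to detect rotational symmetry of $\mathcal{T}_\sigma$ through an angle of $\frac{2\pi}{3}$ rather than $\pi$. The governing combinatorial feature is now a central triangle in $\mathcal{T}_\sigma$ with vertices $i,\ i+\frac{n+2}{3},\ i+\frac{2(n+2)}{3}$ for some $1 \leq i \leq \frac{n+2}{3}$. As noted just before the lemma statement, such a central triangle partitions $\mathcal{T}_\sigma$ into three congruent subtriangulations of $(\frac{n+2}{3}+1)$-gons that are cyclically permuted by the rotation, and exactly two of these subtriangulations fail to contain the pair of vertices $n+1, n+2$.

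First I would extract from $\sigma$ two candidate subwords $\tau_1$ and $\tau_2$, each of length $\frac{n+2}{3}-1$, such that $\tau_2$ is obtained from $\tau_1$ by adding $\frac{n+2}{3}$ to every letter, and such that each is immediately followed in $\sigma$ by the corresponding central-triangle vertex $i+\frac{n+2}{3}$ or $i+\frac{2(n+2)}{3}$. By the same reasoning as in Lemma \ref{lemma tau}, $\tau_1$ must appear as the first subword of $\sigma$ of length $\frac{n+2}{3}-1$ whose letters form a subinterval of $\{1,\dots,n\}$, and $\tau_2$ is then forced to be its shift by $\frac{n+2}{3}$. If these subwords do not exist or do not have the correct form, the algorithm immediately reports that $L_\sigma$ does not have orbit size $\frac{n+2}{3}$.

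Next, I would use Lemma \ref{lemma: maxtriangle} to determine the letters of $\sigma$ that precede $\tau_1$, interleave $\tau_1$ and $\tau_2$, and follow $\tau_2$. As in the proof of Lemma \ref{lemma: Algorithm1}, the letters preceding $\tau_1$ correspond to vertices in an arc cut off by one side of the central triangle, and are produced by a Step~(2)-type pass applied to $\tau_1$. Between $\tau_1$ and $\tau_2$ one finds at most one vertex of the central triangle, namely $i+\frac{n+2}{3}$, whose appearance is governed by whether it has any incident diagonals still to be clipped, and likewise between $\tau_2$ and the clip sequence of the third subtriangulation. The third subtriangulation is precisely the one containing both vertices labeled $n+1$ and $n+2$, and its clip sequence is reconstructed from $\tau_1$ by the shift by $\frac{2(n+2)}{3}$ modulo $n+2$, together with the same recursive reconstruction steps used in Lemma \ref{lemma: Algorithm1}. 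Assembling these pieces into an explicit algorithm that builds a candidate permutation $\sigma'$ and declares orbit size $\frac{n+2}{3}$ if and only if $\sigma = \sigma'$ is then routine.

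The main obstacle will be the case analysis required to correctly interleave three partial clip sequences. Unlike in Lemma \ref{lemma: Algorithm1}, there are now two interior vertices of the central triangle whose clipping order must be tracked, and their position relative to the vertices of the subtriangulations leads to several subcases depending on which sides of the central triangle carry additional incident diagonals. The correctness proof then has two directions: for sufficiency I would verify that the algorithm's output matches the clip sequence applied to the triangulation obtained by gluing three congruent copies of $\mathcal{T}_{\tau_1}$ around the central triangle; for necessity I would appeal to the uniqueness of $\tau_1$ and $\tau_2$ as subwords of $\sigma$, together with the bijectivity of the clip sequence correspondence between 312-avoiding permutations and triangulations.
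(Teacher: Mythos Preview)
The paper does not actually supply a proof of Lemma~\ref{lemma: Algorithm2}; it states the algorithm, gives a worked example, and moves on, evidently treating the argument as a routine adaptation of the proof of Lemma~\ref{lemma: Algorithm1}. Your proposal is exactly that adaptation, so in spirit it matches what the paper intends.

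One small inaccuracy in your sketch is worth flagging. You write that between $\tau_1$ and $\tau_2$ one finds at most one vertex of the central triangle. In fact the paper's algorithm (Step~(2)) and the clip-sequence reasoning force $\tau_1$ and $\tau_2$ to appear \emph{consecutively} in $\sigma$, with the central-triangle vertex $i+\frac{n+2}{3}$ appearing immediately \emph{after} $\tau_2$, not between the two subwords. The reason is that after clipping all of $\tau_1$, the vertex $i+\frac{n+2}{3}$ still carries the diagonal $D_{i+\frac{n+2}{3},\,i+\frac{2(n+2)}{3}}$ and hence is not yet an ear; it only becomes an ear once $\tau_2$ has been clipped as well. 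This does not undermine your overall strategy, but it does simplify the interleaving case analysis you anticipate: there is no branching between $\tau_1$ and $\tau_2$, and the remaining reconstruction of the third subtriangulation proceeds by the same Steps~(3)--(5) mechanism as in Lemma~\ref{lemma: Algorithm1}.
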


	\begin{enumerate}
		\item Determine $\tau_1$ by finding the first subword of length $\frac{n+2}{3}-1$ in $\sigma$ with letters $i, \dots, i+\frac{n+2}{3}-1$ for some $i.$ If no such $\tau_1$ exists, then $\sigma$ does not have orbit size $\frac{n+2}{3}$.
		\item Set $\sigma'$ to be the empty word. For any numbers greater than $\frac{n+2}{3}$ that appear after $\frac{n+2}{3}$ or some other number greater than $\frac{n+2}{3}$, add $\frac{2(n+2)}{3}$ (mod $n+2$) to them and append the result to $\sigma'$. Append $\tau_1$ to $\sigma'$. Add $\frac{n+2}{3}$ to each entry of $\tau_1$ to get $\tau_2$ and append to $\sigma'$. Append $i+\frac{n+2}{3}$ to $\sigma'$. Delete the corresponding number from $\tau_1$.
		\item So long as $\tau_1$ ends in the largest (resp. smallest) number  remaining in $\tau_1$ not equal to $\frac{n+2}{3}-1$ (resp. $\frac{n+2}{3}$), then append the next largest (resp. next smallest) number of $\{1,\dots,  n\}\backslash \sigma'$ less than the smallest number (resp. greater than the largest number) 
		of $\tau_1$ to $\sigma'$ and delete the final number in $\tau_1$.  
		\item If $\tau_1$ does not yet end in the largest or smallest remaining number, add $\frac{n+2}{3}$ to all numbers less than the final number and append. Delete the corresponding numbers from $\tau_1.$
		\item Now $\tau_1$ ends in the smallest remaining number, so return to Step (3) and continue until one number remains in $\tau_1$. The final number of $\sigma'$ is then determined by the unique number appearing in $\{1, \dots, n\}\backslash \sigma'$.
		
		\item $\sigma$ has orbit size $\frac{n+2}{3}$ if it is equal to $\sigma'$. 
	\end{enumerate}

	\begin{ex}
	We can identify $\sigma=2\,1\,5\,4\,3\,6\,7$ as a permutation with orbit size $\frac{7+2}{3}=3$ using the above algorithm. First identify $\tau_1$ as the first length 2 subword with two consecutive letters, i.e., $\tau_1= 2\,1\in S_2$. Then $\tau_2=5\,4$ and the string $2\,1\,5\,4\,3$ must appear in $\sigma$ in order for it to have orbit size 3. We can also determine that no letters appear before  $\tau_1$ because 1 and 2 already appear in our word. Since $\tau_1$ ends with the smallest letter of the triangulation $\mathcal{T}_{\tau_1},$ we append 6. The final remaining number is 7, so we see that $\sigma'=\sigma$ and therefore $\sigma$ has orbit size 3. \hfill $\Box$
	\end{ex}

	We conclude this subsection with a table of orbit sizes of pinching sequence fillings of $\la(A_3)$, i.e. the case $n=4$. 
	\begin{center}
		\begin{tabular}{ |c|c|c| }
			\hline
			\text{Permutation}  & \text{Orbit Size}\\\hline 
			
			1 2 3 4  &  6\\
			\hline
			1 2 4 3 & 3\\
			\hline
			1 3 2 4 & 2\\
			\hline
			1 3 4 2 & 6 \\
			\hline
			1 4 3 2 & 3\\
			\hline
			2 1 3 4 &  3\\
			\hline
			2 1 4 3 & 6\\
			\hline
			2 3 1 4 & 3\\
			\hline
			2 3 4 1 & 6\\
			\hline
			2 4 3 1 & 2\\
			\hline
			3 2 1 4 & 6\\
			\hline
			3 2 4 1 & 3\\
			\hline
			3 4 2 1 & 3\\
			\hline
			4 3 2 1 & 6\\
			\hline
			
		\end{tabular}
	\end{center}

	\subsection{Rotations of Triangulations}\label{sub:rotation}
	
	In this subsection, we describe a counterclockwise rotation of the $(n+2)$-gon through an angle of $\frac{2\pi}{n+2}$ as a geodesic path in the flip graph. We refer to any triangle with edges made up solely of diagonals $D_{i, i+j}$ for $j\geq 2$ as an internal triangle, and we denote the number of internal triangles in a triangulation $\mathcal{T}_\sigma$ by $t_\sigma$.

	We will say that a diagonal $D_{i, j}$ is (counter)clockwise to another diagonal $D_{i, j'}$ if the vertex $j$ is (counter)clockwise to $j'$. Similarly, $D_{i, j}$ is (counter)clockwise to $D_{i', j}$ if $i$ is (counter)clockwise to $i'$. Given a triangulation $\mathcal{T}_\sigma$, the following algorithm describes a sequence of $n-1+t_\sigma$ edge flips that produce a rotation of $\mathcal{T}_\sigma$ by $\frac{2\pi}{n+2}$ radians in the counterclockwise direction.

		\begin{enumerate}

			\item For any diagonals $D_{i,j}$ with no incident diagonal counterclockwise to it, perform an edge flip at $D_{i, j}$ to get $D_{i-1, j-1}$. Continue to flip any such diagonals not previously flipped until no such diagonals remain.

			\item Choose an internal triangle $T$ with a diagonal $D_{i,j}$ not previously flipped  and 
	    	admitting no incident diagonal $D_{i', j}$ counterclockwise to it. 
		
	       Perform an edge flip at $D_{i,j}$ and then flip any diagonals not previously flipped that have no incident counterclockwise diagonals. 
	        
	        \item If a diagonal $D_{i', j'}$ 
	        of $T$ does have incident counterclockwise diagonals, then perform an edge flip at the counterclockwise-most of these incident diagonals. Flip any diagonals not previously flipped that now admit no incident counterclockwise diagonals.
	        
	        
	        \item Repeat Step (3) until no diagonals counterclockwise to $D_{i', j'}$ remain. Perform an edge flip at $D_{i', j'}$. Once the second and third diagonals of $T$ have been flipped, perform an edge flip at the initial diagonal previously belonging to $T$.
	        
	        \item Repeat Steps (3) and (4) starting with the remaining diagonals in the triangle corresponding to the counterclockwise diagonal flipped in Step (3). Continue until all possible diagonals have been flipped at.
	        
		\end{enumerate}
		
\begin{center}
		\begin{figure}[h!]{ \includegraphics[width=\textwidth]{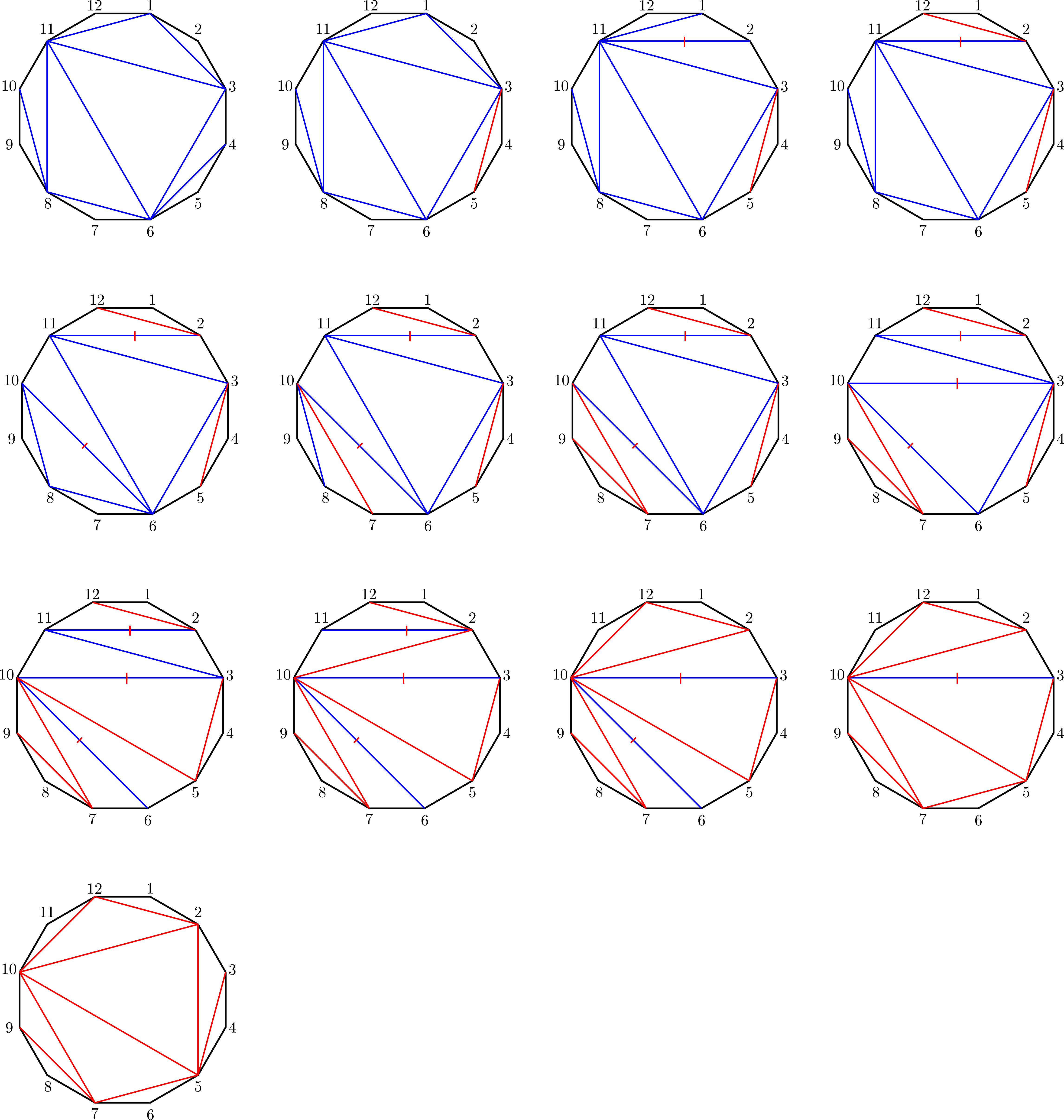}}\caption{	Counterclockwise rotation of a triangulation of the 12-gon by 10-1+3=12 edge flips. The red diagonals are diagonals of the rotated triangulation, while the blue diagonals with a red mark are diagonals that are the result of a previous edge flip but are not diagonals of the rotated triangulation.}
			\label{fig: rotation}\end{figure}
	\end{center}

	\begin{ex}
	If the triangulation $\mathcal{T}_\sigma$ only contains diagonals of the form $D_{i, j_1}, \dots, D_{i, j_{n-1}}$, then the instructions above reduce to simply performing edge flips in reverse order of indexing, starting with $D_{i, j_{n-1}}$ and ending with $D_{i, j_1}$. See Figure \ref{fig: rotation} for a more involved example with three internal triangles. 
	\end{ex}

\begin{theorem}\label{thm: rotation}


The number of edge flips required to realize a counterclockwise rotation of a triangulation $\mathcal{T}_\sigma$ of the $(n+2)$-gon by $\frac{2\pi}{n+2}$ is $n-1+t_\sigma$. The above instructions describe a sequence of $n-1+t_\sigma$ edge flips realizing such a rotation.

	\end{theorem}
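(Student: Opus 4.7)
The plan is to prove both directions of the equality: that the algorithm produces a counterclockwise rotation $\rho(\mathcal{T}_\sigma)$ using exactly $n-1+t_\sigma$ flips (upper bound), and that no flip sequence realizing this rotation uses fewer flips (lower bound). Here $\rho$ denotes the map $D_{i,j} \mapsto D_{i-1,j-1}$ on diagonals, with indices taken modulo $n+2$.

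For the upper bound, I would proceed by strong induction on $t_\sigma$. In the base case $t_\sigma=0$, every triangle of $\mathcal{T}_\sigma$ has at least one boundary edge, and I claim that any diagonal $D_{i,j}$ with no counterclockwise-incident diagonal sits in a quadrilateral whose other diagonal is precisely $D_{i-1,j-1}$. The justification is combinatorial: the absence of such an incident diagonal forces the two triangles flanking $D_{i,j}$ to have third vertices $j-1$ and $i-1$. Consequently, each iteration of Step (1) flips $D_{i,j}$ directly onto $\rho(D_{i,j})$, and a straightforward argument shows that the collection of CCW-terminal diagonals stays nonempty after each flip so long as some diagonal remains un-flipped, giving exactly $n-1$ flips. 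For the inductive step, I would isolate a single internal triangle $T$ processed by Steps (2)--(4); these steps perform three flips at the sides of $T$ together with a single transitional flip at an auxiliary incident diagonal, for a local total of $4$ flips. Combining this with the inductive hypothesis applied to the resulting combinatorial configuration yields $n-1+t_\sigma$, once one verifies that the transitional flip associated to $T$ does not coincide with any diagonal-to-rotated-image flip counted separately.

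For the lower bound, I would split the count into two contributions. The first contribution of $n-1$ comes from the observation that no diagonal of the $(n+2)$-gon is fixed by the primitive rotation of order $n+2$: indeed $D_{i,j}$ and $D_{i+1,j+1}$ always cross when $D_{i,j}$ is a diagonal, so no triangulation contains both a diagonal and its rotated image. Hence each of the $n-1$ diagonals of $\mathcal{T}_\sigma$ must be removed by some flip in any realizing sequence. The second contribution of $t_\sigma$ comes from a local obstruction at each internal triangle. For $T = \{a,b,c\}$ internal in $\mathcal{T}_\sigma$, consider the first flip in any realizing sequence that removes a side of $T$, say $D_{a,b}$. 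At this moment the other two sides $D_{a,c}, D_{b,c}$ of $T$ are still present, so the triangle adjacent to $D_{a,b}$ opposite $T$ in the current triangulation has some third vertex $d \neq c$; the flip introduces $D_{c,d}$. A case analysis using the assumption that $T$ is internal shows $D_{c,d} \notin \rho(\mathcal{T}_\sigma)$, since the configuration forced by $T$ internal is incompatible with having $D_{c+1,d+1} \in \mathcal{T}_\sigma$. Therefore $D_{c,d}$ must itself be flipped away at a later step, furnishing the additional $t_\sigma$ flips once one verifies that these transitional flips are distinct for distinct internal triangles.

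The main obstacle will be the careful bookkeeping required in both directions. In the upper bound, one must verify that the algorithm's transitional flips do not interfere when internal triangles are adjacent or nested, so that the induction on $t_\sigma$ goes through cleanly; this requires tracking how the CCW-terminal structure is preserved after processing each internal triangle. In the lower bound, one must carefully justify that the transitional flips forced by distinct internal triangles are genuinely distinct operations in the flip sequence, which amounts to showing that the assignment $T \mapsto$ (transitional flip for $T$) is injective under the chronology of first-removed sides. Both tasks reduce to a detailed but essentially combinatorial analysis of the flip structure on the sub-polygons bounded by the internal triangles of $\mathcal{T}_\sigma$.
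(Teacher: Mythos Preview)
Your overall strategy coincides with the paper's: the lower bound comes from combining the fact that every diagonal of $\mathcal{T}_\sigma$ must be removed (since $D_{i,j}$ and $D_{i+1,j+1}$ cross, no diagonal survives rotation) with a local obstruction at each internal triangle forcing one extra flip, and the upper bound comes from verifying that the stated algorithm uses exactly this many flips. The key mechanisms you isolate---the first side removed from an internal triangle necessarily creates a non-target diagonal, and the algorithm flips one side of each internal triangle twice---are precisely those in the paper.

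The one place your framing diverges and becomes problematic is the proposed induction on $t_\sigma$ for the upper bound. After Steps (2)--(4) process a single internal triangle, the resulting configuration is \emph{not} a fresh triangulation of the $(n+2)$-gon awaiting rotation: some diagonals already sit in their rotated positions while others do not, so there is no smaller instance of the same problem to which the inductive hypothesis applies. The paper sidesteps this by arguing the upper bound directly: each diagonal is flipped once, except that for every internal triangle exactly one side is flipped a second time (from the transitional position $D_{j-1,k}$ back to the rotated position $D_{i-1,k-1}$), giving $n-1+t_\sigma$ immediately. You should replace the induction with this direct count. On the lower-bound side you are actually more careful than the paper, which only sketches the shared-edge case; your identification of the injectivity of the map $T\mapsto(\text{first flip removing a side of }T)$ as the crux is exactly right and is what a complete argument must establish.
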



	\begin{proof}

		We first argue that the number of flips needed to rotate a triangulation is at least $n-1+t_\sigma$. Since no diagonal of our original triangulation is a diagonal of our rotated triangulation, a rotation of the triangulation $\mathcal{T}_\sigma$ requires at least $n-1$ edge flips, i.e. as many edge flips as diagonals of $\mathcal{T}_\sigma$. However, in an internal triangle, it is not possible to apply a single edge flip to any of the three sides (or any other diagonal) so that the result is a side of the rotated triangle, or indeed any diagonal of the rotated triangulation. This is because each of the three sides prevents the side immediately counterclockwise to it from rotating in a counterclockwise direction. If none of the internal triangles share a side, then the claim follows. Otherwise, we argue that any two triangles sharing an edge still require at least two extra edge flips to rotate. The only possible way we could have fewer is if we could perform an edge flip at the shared side and then rotate the two triangles with a single edge flip of each of the remaining sides. However, if we apply an edge flip at the shared side, then the remaining sides of the two triangles prevent the opposite pair from achieving the desired rotation. Therefore, we must have at least $n-1+t_\sigma$ edge flips for a rotation of $\frac{2\pi}{n+2}$.

		The algorithm given above describes a path in the flip graph of length $n-1+t_\sigma$ since we have two edge flips for a single diagonal in each internal triangle and one for every other diagonal. It remains to show that the result is a rotation of the initial triangulation $\mathcal{T}_\sigma$. In Step (1), an edge flip at a diagonal $D_{i, j}$ results in the diagonal $D_{i-1, j-1}$ precisely because there are no diagonals counterclockwise to it 
		and therefore $D_{i, j}$ is a diagonal of the quadrilateral with sides $D_{i, j-1}, D_{j-1, j}, D_{i-1, j}, D_{i-1, i}$. 
		It follows that each edge flip in Step (1) results in a diagonal of the rotated triangulation. If the triangulation $\mathcal{T}_\sigma$ has no internal triangles, then applying Step (1) to each of the $n-1$ diagonals results in the desired rotation.
		
		Suppose that $\mathcal{T}_\sigma$ has at least one internal triangle. In Step (2), an edge flip at the diagonal $D_{i, j}$ in an internal triangle $\{D_{i, j}, D_{j, k}, D_{i, k}\}$ with no diagonal counterclockwise to it, results in the diagonal $D_{j-1, k}.$ 
 		Once the remaining diagonals of the triangle have no incident counterclockwise diagonals, Step (4) applies an edge flip to them so that $D_{i, k}$ becomes $D_{i-1, j-1}$ and $D_{j, k}$ becomes $D_{j-1, k-1}$. 
 		Step (4) then flips $D_{j-1, k}$ to $D_{i-1, k-1}$. Crucially, the order of edge flips ensures that during Steps (2)-(4), we strictly decrease the number of counterclockwise incident diagonals to $D_{j, k}$ and $D_{i, k}$ at each step. After rotating our initial triangle, we can continue this process with the next internal triangle. 

		It remains to show that in Step (2), a diagonal $D_{i, j}$ of an internal triangle with no incident counterclockwise diagonals $D_{i', j}$ always exists 
		If $D_{i,j}$ has an incident counterclockwise diagonal not belonging to an internal triangle, then Step (1) will apply an edge flip at such a diagonal so that it is no longer counterclockwise to $D_{i, j}$. If $D_{i, j}$ has an incident counterclockwise diagonal that belongs to an internal triangle, then there is some counterclockwise-most diagonal $D_{i, j'}$ also belonging to an internal triangle. Note that an edge flip at $D_{i, j'}$ removes one of the diagonals counterclockwise to $D_{i, j}$, so we can repeat this argument until we have performed an edge flip at all such diagonals.
	\end{proof}

	\begin{remark}
	For triangulations that allow for a choice of ordering edge flips, it follows from a theorem of Pournin's \cite[Theorem 2]{Pournin14} that naively proceeding with any of the equivalent options will still yield a geodesic. We can reinterpret this in the cluster algebraic setting as the fact that distant mutations commute. In this context, any geodesic path gives the mutations describing the cluster automorphism induced by the K\'alm\'an loop. \hfill $\Box$
	\end{remark}

		\subsection{Edge flips in terms of permutations}\label{sub:mutation}
	
	In this subsection, we describe an edge flip at a diagonal $D_{j,l}$ of the triangulation $\mathcal{T}_\sigma$ in terms of the 312-avoiding permutation $\sigma$. 
	
	Let $\sigma\in S_n$ be a 312-avoiding permutation with corresponding triangulation given by the clip sequence bijection. Consider a quadrilateral with sides $D_{i, j}, D_{j, k}, D_{k, l}$ and $D_{i, l}$ appearing in the triangulation $\mathcal{T}_\sigma$. Figure \ref{fig:Squares} depicts this quadrilateral with two possible diagonals, $D_{i, k}$ and $D_{j, l}$ separating it into two triangles. An edge flip at one of these diagonals yields the other.   
	
	\begin{center}
		\begin{figure}[h!]{ \includegraphics[width=.6\textwidth]{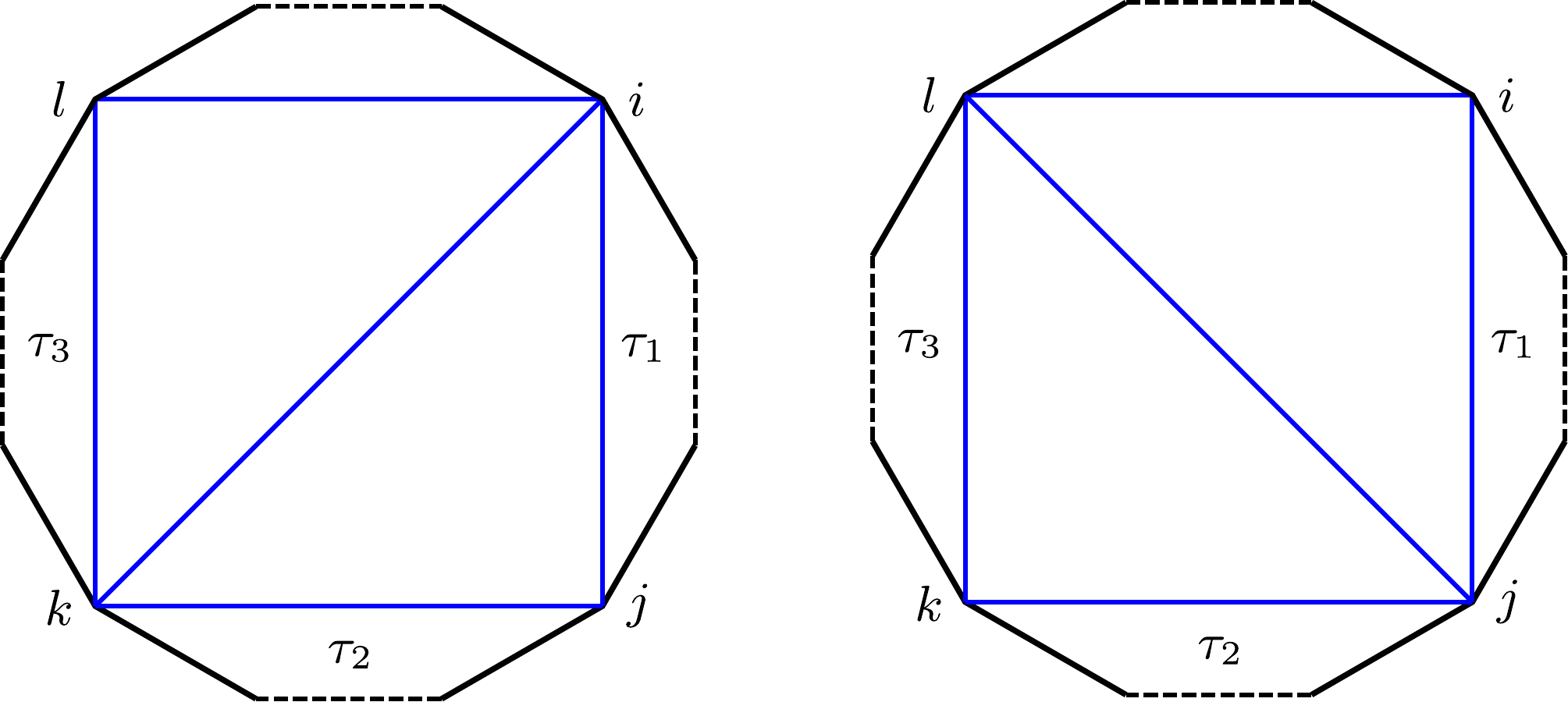}}\caption{Schematic of an edge flip depicting the triangulation $\mathcal{T}_\sigma$ (left) and the result of applying an edge flip to $\mathcal{T}_\sigma$ at $D_{i, k}$ (right). The dotted lines represent arbitrarily many edges of the $(n+2)$-gon and the indices are chosen so that either $1\leq i< j<k<l\leq n+1$ or $j<k<l=n+1, i=n+2$. The labels $\tau_1, \tau_2,$ and $\tau_3$ represent subwords of $\sigma$ corresponding to different section of $\mathcal{T}_\sigma$. If any of the edges of the quadrilateral lie on the $(n+2)$-gon, then we consider the corresponding $\tau_i$ to be the empty word.}
			\label{fig:Squares}\end{figure}
	\end{center}

	As in the orbit size algorithm, we can determine the structure of $\sigma$ based on the existence of the edges of the quadrilateral. Specifically, $\sigma$ admits subwords $\tau_1, \tau_2, $  and $\tau_3$, where the subword $\tau_1$ contains the letters $i+1, \dots, j-1$, the subword $\tau_2$ contains letters $j+1, k-1$, and the subword $\tau_3$ contains letters $k+1, \dots l-1$.  From this construction, we can deduce the effect on $\sigma$ of a single edge flip at $D_{i, j}$. 
	
	\begin{theorem}\label{thm: mutation} 
	Given a triangulation $\mathcal{T}_\sigma$ containing a quadrilateral with diagonal $D_{i, k}$, the 312-avoiding permutation $\sigma$ is of the form $\dots \tau_1\tau_2j\tau_3k\dots$. An edge flip at the diagonal $D_{j, l}$ yields a permutation of the form $\dots\tau_1 \tau_2\tau_3 kj \dots$. 
	\end{theorem}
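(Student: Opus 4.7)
The plan is to compute both $\sigma$ and the post-flip permutation $\sigma'$ directly from the clip sequence bijection applied to $\mathcal{T}_\sigma$ and the triangulation $\mathcal{T}_{\sigma'}$ obtained from the edge flip, tracking the order in which the vertices inside the quadrilateral are removed. A key initial observation is that the sub-triangulations attached to the edges $D_{i,j}, D_{j,k}, D_{k,l}$ are identical before and after the flip, so they produce the same subwords $\tau_1, \tau_2, \tau_3$ from their own internal clip sequences in each case.

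For $\mathcal{T}_\sigma$, the clip sequence first processes $\tau_1$: its labels $\{i+1,\ldots,j-1\}$ are smaller than $j$, than all labels of $\tau_2$, $\tau_3$, and than $k$, and its sub-triangulation always exposes an ear while it still contains vertices. Next $\tau_2$ is processed: even though $j < \tau_2$'s labels, $j$ is not yet an ear because $D_{j,k}$ remains a diagonal until $\tau_2$ finishes. Once $\tau_1$ and $\tau_2$ are both exhausted, $j$ is the middle of the ear $\{i,j,k\}$ (with $D_{i,j}, D_{j,k}$ boundary and $D_{i,k}$ the remaining diagonal), and since $j$ is smaller than $\tau_3$'s labels and $k$, it is clipped next. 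The analogous analysis for the second interior triangle then yields $\tau_3$ before $k$. Consecutivity of $\tau_1 \tau_2 j \tau_3 k$ in $\sigma$ follows from the fact that its labels fill the contiguous integer interval $\{i+1,\ldots,l-1\}$ while all external labels lie in $\{1,\ldots,i\}\cup\{l,\ldots,n+2\}$: any external label of the first type, if available as an ear during internal processing, would be chosen first by the smallest-ear-middle rule, and any external label of the second type is larger than every quadrilateral label (including $k$) and so cannot be chosen before the internal processing finishes.

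For $\mathcal{T}_{\sigma'}$, the new diagonal $D_{j,l}$ replaces $D_{i,k}$, and the interior triangles become $\{i,j,l\}$ and $\{j,k,l\}$. The processing of $\tau_1$ and $\tau_2$ proceeds exactly as before and for the same reasons. The essential change is that after $\tau_1, \tau_2$ finish, $j$ is still not an ear: its incident edges in $\mathcal{T}_{\sigma'}$ are $D_{i,j}, D_{j,k}$ (now boundary) together with the new diagonal $D_{j,l}$, and $j$ is the middle of neither $\{i,j,l\}$ nor $\{j,k,l\}$. Hence $\tau_3$ is clipped next, followed by $k$, which is the middle of the ear $\{j,k,l\}$ once $\tau_3$ exposes $D_{k,l}$ as a boundary. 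Clipping $k$ turns $D_{j,l}$ into a boundary edge of the current polygon, and $j$ finally becomes the middle of the ear $\{i,j,l\}$, yielding the local order $\tau_1 \tau_2 \tau_3 k j$; the same external-label comparison shows this subword is consecutive in $\sigma'$.

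I expect the main obstacle to be rigorously establishing consecutivity, particularly the interaction of the quadrilateral region with the outside sub-triangulation through the shared vertices $i$ and $l$, along with verifying that the degenerate configurations---where some $\tau_m$ is empty because a side of the quadrilateral lies on the $(n+2)$-gon, or where $i = n+2$ under the theorem's second indexing convention---are covered by the same reasoning.
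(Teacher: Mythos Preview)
Your proposal is correct and follows essentially the same approach as the paper: both arguments track the clip sequence bijection and use that each $\tau_m$ contains an ear of the full triangulation (so the subwords are processed in the order $\tau_1,\tau_2,\tau_3$ by label size), while $j$ and $k$ are blocked from being ear-middles until their adjacent sub-triangulations are exhausted. Your treatment is in fact more thorough than the paper's very brief argument, since you explicitly justify the consecutivity of the block $\tau_1\tau_2 j\tau_3 k$ within $\sigma$ (via the observation that clipping internal vertices cannot create external ears with labels in $\{1,\ldots,i\}$, while external labels $\ge l$ exceed $k$) and you spell out the key mechanism distinguishing the two cases, namely that in $\mathcal{T}_{\sigma'}$ the new diagonal $D_{j,l}$ prevents $j$ from becoming an ear until after $k$ is removed.
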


	\begin{proof}
    The theorem follows from the observation that each $\tau_i$ must contain at least one ear -- a triangle of with edges $D_{i, i+1}, D_{i, i+2}, D_{i+1, i+2}$ -- of the triangulation $\mathcal{T}_\sigma$. Therefore, under the clip sequence bijection, the word $\tau_i$ appears before $\tau_j$ if $i<j$. Moreover, the vertex labels $j, k$ appear only after the two quadrants immediately adjacent to the vertex have been deleted under the clip sequence process. Thus, the two 312-avoiding permutations corresponding to the triangulation $\mathcal{T}_\sigma$ and the triangulation resulting from applying an edge flip are precisely of the form described.
	\end{proof}

	\begin{ex}
	Consider the permutation $\sigma=1\, 5\, 4\, 3\, 6\, 2$. If we wish to apply an edge flip to the diagonal $D_{2,6}$, then we can identify the vertex labels of the relevant quadrilateral as $i=2, j=3, k=6,$ and $l=7$. This immediately tells us that $\tau_1$ and $\tau_3$ are both empty and $\tau_2$ is the subword $5 \, 4$. Therefore, Theorem \ref{thm: mutation} above implies that we simply interchange $j$ and $k$ to get the resulting permutation $\mu(\sigma)=1\, 5\, 4\, 6\, 3\, 2.$ See Figure \ref{fig: flip} for the triangulations $\mathcal{T}_\sigma$ and the triangulation resulting from the edge flip. \hfill $\Box$
	\end{ex}

		\begin{center}
		\begin{figure}[h!]{ \includegraphics[width=.6\textwidth]{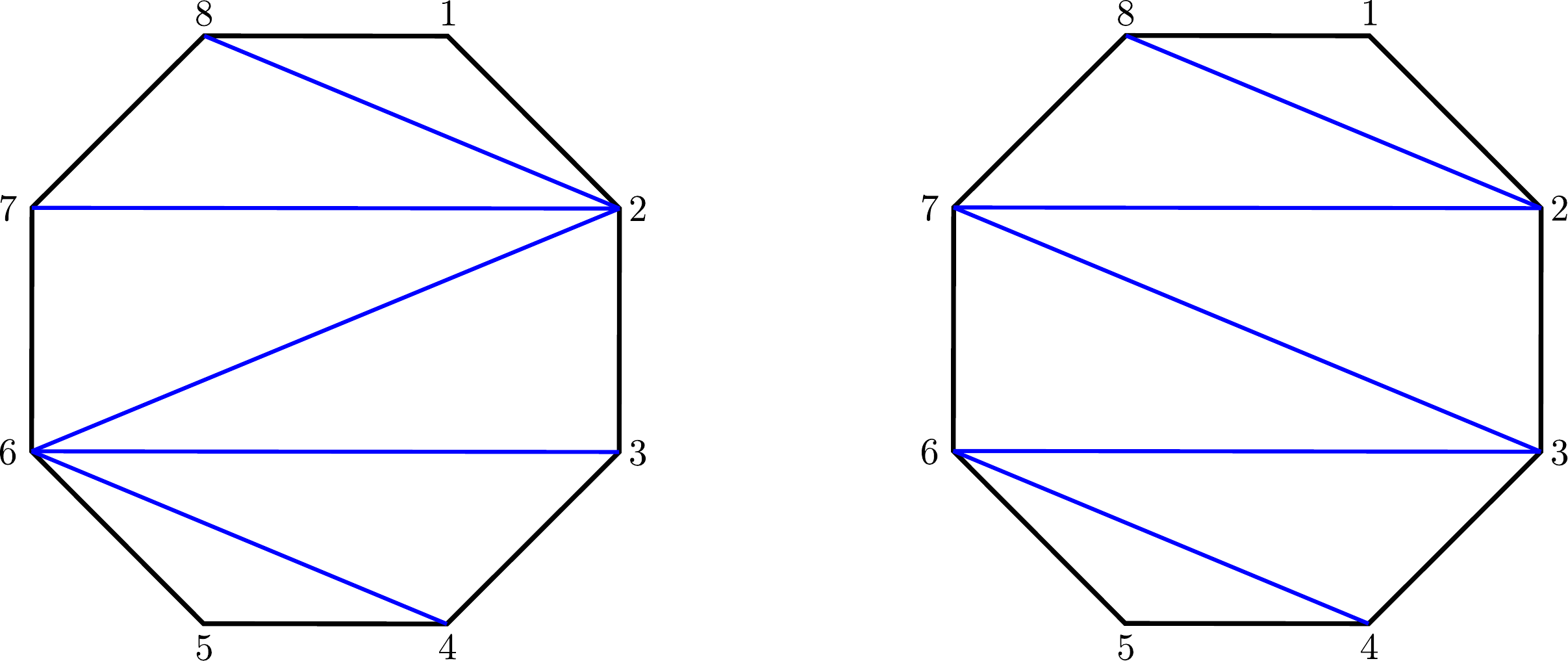}}\caption{An edge flip at the diagonal $D_{2,6}$ in the triangulation $\mathcal{T}_{1\, 5\, 4\, 3\, 6\, 2}$ yields the permutation $1\, 5\, 4\, 6\, 3\, 2$.}
			\label{fig: flip}\end{figure}
	\end{center}
Together with Theorem \ref{thm: rotation}, the above computation gives an explicit combinatorial construction of K\'alm\'an loop in terms of geodesics paths of the flip graph and the corresponding behavior of 312-avoiding permutations.

	\bibliographystyle{alpha}
	\bibliography{AnRotations}

\end{document}